\documentclass[10pt,reqno]{amsart}

\newcommand{\ignore}[1]{}

% To erase strike-out text comment out the following:
%\usepackage{soul}
% and activate the new command
\newcommand{\st}[1]{}

\usepackage{xcolor}
% To convert all red text into black, activate the following:
\colorlet{red}{black}

%\setlength{\mathindent}{0cm}

%%
%% spacing  10pt 11pt 12pt
%% 1andhalf 1.25 1.21 1.24
%% double   1.67 1.62 1.66
%%
 \normalsize

\usepackage{amsmath,amsthm,amssymb}
\usepackage{lscape}

\newcommand{\be}{\begin{equation}}
\newcommand{\ee}{\end{equation}}

\theoremstyle{plain}
\newtheorem{proposition}{Proposition}
\newtheorem{lemma}[proposition]{Lemma}

\newtheorem{theorem}[proposition]{Theorem}
\newtheorem{corollary}[proposition]{Corollary}
\theoremstyle{definition}
\newtheorem{definition}{Definition}
\theoremstyle{remark}
\newtheorem{remark}{Remark}
\newtheorem{observation}{Observation}
\newtheorem{example}{Example}

%\numberwithin{equation}{subsubsection}
%\numberwithin{figure}{section}

%\theorembodyfont{\slshape|\rmfamily} \newtheorem{proposition}{Proposition}[section]

\newcommand{\EQn}{\begin{equation}}
\newcommand{\ENn}{\end{equation}}
\newcommand{\EQAn}{\begin{eqnarray}}
\newcommand{\ENAn}{\end{eqnarray}}
\newcommand{\EQA}{\begin{eqnarray*}}
\newcommand{\ENA}{\end{eqnarray*}}

\usepackage{algorithm, algorithmic}
\usepackage{graphicx}%,epsfig}
\usepackage{caption}
\usepackage{subcaption}
\usepackage{placeins}
\usepackage{float}
\theoremstyle{definition}
%[section]

\usepackage{pdflscape}
\usepackage{array}
\newcolumntype{L}[1]{>{\raggedright\let\newline\\\arraybackslash\hspace{0pt}}m{#1}}
\newcolumntype{C}[1]{>{\centering\let\newline\\\arraybackslash\hspace{0pt}}m{#1}}
\newcolumntype{R}[1]{>{\raggedleft\let\newline\\\arraybackslash\hspace{0pt}}m{#1}}

\usepackage{multirow}
\usepackage{setspace}
\usepackage{booktabs}

\usepackage[comma]{natbib}
\usepackage{enumerate}
\usepackage{rotating}

\usepackage{dcolumn}
\setlength{\tabcolsep}{2.0pt}

\newcommand{\commentout}[1]{}

\newcounter{commentcounter}
\setcounter{commentcounter}{1}

\long\def\symbolfootnote[#1]#2{\begingroup%
\def\thefootnote{\fnsymbol{footnote}}\footnote[#1]{#2}\endgroup}
\newcommand{\ccomment}[1]{{\footnotesize\textbf{\textcolor{red}{(C.\arabic{commentcounter})}}\symbolfootnote[4]{\texttt{\textcolor{red}
        {(C.\arabic{commentcounter})~#1}}}}\addtocounter{commentcounter}{1}}

%%UNCOMMENT THE FOLLOWING LINE TO COMPILE DOCUMENT WITHOUT INCLUDING OUR COMMENTS
\renewcommand{\ccomment}[1]{}

%%%
%%% D R A F T
%%%
%%\special{!userdict begin /bop-hook{gsave 200 30 translate 60 rotate
%%/Times-Roman findfont 216 scalefont setfont 150 50 moveto 0.95 setgray
%%(Draft) show grestore}def end}

% FIRST PAGE ONLY D R A F T A
%\special{!gsave 200 30 translate 60 rotate
%/Times-Roman findfont 216 scalefont setfont 150 50 moveto 0.95 setgray
%(Draft - Not for distribution) show grestore}

%%\background{Apricot}

\usepackage{bcol}
\pagestyle{plain}

\setlength{\topmargin}{-0in}
\setlength{\evensidemargin}{0.5in} % approx 1 inch
\setlength{\oddsidemargin}{0.5in} % approx 1 inche
\setlength{\textwidth}{5.5in} % the right margin needs to be at least 1 in
\setlength{\textheight}{8in}

\def\qed{$\square$}
\allowdisplaybreaks
\title{Path Cover and Path Pack Inequalities for the Capacitated Fixed-Charge Network Flow Problem}

\author{Alper Atamt\"urk, Birce Tezel and Simge K\"u\c{c}\"ukyavuz}
%\date{}
\thanks{ \noindent \hskip -1mm
A. Atamt\"urk: Department of Industrial Engineering \& Operations Research, University of California, Berkeley, CA 94720.
\texttt{atamturk@berkeley.edu}   \\
B. Tezel: Department of Industrial Engineering \& Operations Research, University of California, Berkeley, CA 94720. \texttt{btezel@berkeley.edu} \\
S. K\"u\c{c}\"ukyavuz: Department of Industrial and Systems Engineering, University of Washington, Seattle, WA 98195. \texttt{simge@uw.edu} .
}

\ignore{
\author{Alper Atamt\"urk, Birce Tezel \\ \small{Department of Industrial Engineering \& Operations Research} \\ \small{University of California, Berkeley, CA 94720} \\ \small{\texttt{atamturk@berkeley.edu}, \texttt{btezel@berkeley.edu}}  \\\\ Simge K\"u\c{c}\"ukyavuz \\ \small{Industrial and Systems Engineering} \\ \small{University of Washington, Seattle, WA 98195} \\ \small{\texttt{simge@uw.edu}} }
}

\date{}
\begin{document}
\maketitle
\begin{abstract}
Capacitated fixed-charge network flows are used to model a variety of problems in telecommunication, facility location, production planning and supply chain management.
In this paper, we investigate capacitated path substructures and derive strong and easy-to-compute \emph{path cover and path pack inequalities}. These inequalities are based on an explicit characterization of the submodular inequalities through a fast computation of parametric minimum cuts on a path, and they generalize the well-known flow cover and flow pack inequalities for the single-node relaxations of fixed-charge flow models. We provide necessary and sufficient facet conditions. Computational results demonstrate the effectiveness of the inequalities when used as cuts in a branch-and-cut algorithm.
\end{abstract}
\begin{center} July 2015; October 2016; May 2017 \end{center}

\BCOLReport{15.03}%{SIAM Journal on Optimization}

\section{Introduction}
Given a \textcolor{red}{\st{directed graph}} \textcolor{red}{directed multigraph} with demand or supply on the nodes, and capacity, fixed and variable
cost of flow on the arcs, the capacitated fixed-charge network flow (CFNF)
problem is to choose a subset of the arcs and route
the flow on the chosen arcs while satisfying the supply, demand and capacity constraints, so that the sum of fixed and variable costs is minimized.

There are numerous polyhedral studies \textcolor{red}{\st{on}} \textcolor{red}{of} the fixed-charge network flow problem. In a seminal paper \cite{W89} introduces the so-called submodular inequalities, which
subsume almost all valid inequalities known for capacitated fixed-charge networks. Although the submodular inequalities are very general, their coefficients are defined implicitly through value functions. In this paper, we give explicit valid inequalities that simultaneously  make use of the path substructures of the network as well as the arc capacities.

\ignore{
However, few give explicit valid inequalities that simultaneously  make use of the path substructures of the network as well as the arc capacities, which is the goal of the current paper. The inequalities introduced here are based on the seminal paper by \cite{}
}

For the {\it uncapacitated} fixed-charge network flow problem, \cite{vRW85} give flow path inequalities that are based on path substructures. \cite{rardin} introduce a new family of dicut inequalities and show that they describe the projection of \textcolor{red}{an} extended multicommodity formulation onto the original variables of fixed-charge network flow problem.
\cite{OW03} present a computational study on the performance of path and cut-set (dicut) inequalities.

For the {\it capacitated} fixed-charge network flow problem, almost all known valid inequalities are based on single-node relaxations.
\cite{PvRW85}, \cite{vRW86} and \cite{GNS99} give flow cover, generalized flow cover and lifted flow cover inequalities.
\cite{S:comp-class} introduces the complement class of generalized flow cover inequalities and \cite{A:fp} describes lifted flow pack inequalities. Both uncapacitated path inequalities and capacitated flow cover inequalities are highly valuable in solving a host of practical problems and are part of the suite of cutting planes implemented in modern mixed-integer programming solvers.

The \textit{path} structure arises naturally in network models of the lot-sizing problem. \cite{AM04} introduce valid inequalities for the capacitated lot-sizing problems with infinite inventory capacities. \cite{AK03} give valid inequalities for the lot-sizing problems with finite inventory and infinite production capacities. \textcolor{red}{\cite{Van} introduces path-modular inequalities for the uncapacitated fixed charge transportation problems. These inequalities are derived from a value function that is neither globally submodular nor supermodular but that exhibits sub or supermodularity under certain set selections.} \cite{vVO04} and \cite{GK11} give valid inequalities and extended formulations for uncapacitated lot-sizing with fixed charges on stocks. For uncapacitated lot-sizing with backlogging, \cite{pochet1988lot} and \cite{pochet1994polyhedra} provide valid inequalities and  \cite{kuccukyavuz2009} provide an explicit description of the convex hull.

\subsubsection*{Contributions}
In this paper we consider a generic path relaxation, with supply and/or demand nodes and capacities on incoming and outgoing arcs. By exploiting the path substructure of the network and introducing notions of \textit{path cover} and \textit{path pack} we provide two explicitly-described subclasses of the submodular inequalities of \cite{W89}. The most important consequence of the explicit derivation is that the coefficients of the submodular inequalities on a path can be computed efficiently. \textcolor{red}{\st{In particular, we show that \textit{all} coefficients of such an inequality can be computed by solving  max-flow/min-cut problems parametrically over the path in linear time.}} \textcolor{red}{In particular, we show that the coefficients of such an inequality can be computed by solving max-flow/min-cut problems parametrically over the path. Moreover, we show that \textit{all} of these coefficients can be computed with a single linear-time algorithm.} For a path with a single node, the inequalities reduce to the well-known flow cover and flow pack inequalities. Moreover, we show that the path cover and path pack inequalities dominate flow cover and flow pack inequalities for the corresponding single node relaxation of a path obtained by merging the path into a single node. We give necessary and sufficient facet-defining conditions. Finally, we report on computational experiments demonstrating the effectiveness of the proposed inequalities when used as cuts in a branch-and-cut algorithm.

\subsubsection*{Outline}
The remainder of this paper is organized as follows: In Section \ref{sec:model}, we describe the capacitated fixed-charge flow problem on a path, its formulation and the assumptions we make. In Section \ref{sec:spi}, we review the submodular inequalities, discuss their computation on a path, and introduce two explicit subclasses: path cover inequalities and path pack inequalities. In Section \ref{sec:facet}, we analyze \textcolor{red}{\st{the}} sufficient and necessary facet-defining conditions. In Section \ref{sec:comp}, we present computational experiments showing the effectiveness of the path cover and path pack inequalities compared to other network inequalities.

\section{Capacitated fixed-charge network flow on a path} \label{sec:model}

Let $G= (N^{\prime},A)$ be a directed \textcolor{red}{multigraph} with nodes $N^{\prime}$ and arcs $A$. Let $s_{N}$ and $t_{N}$ be the source and the sink nodes of $G$. Let $N:= N^{\prime}\setminus \{s_{N},t_{N}\}$. Without loss of generality, we label $N:=\{1,\dots, n\}$ such that a directed \textit{forward path} arc exists from node $i$ to node $i+1$ and a directed \textit{backward path} arc exists from node $i+1$ to node $i$ for each node $i=1,\dots,n-1$ (see Figure \ref{fig:lsntwk} for an illustration).
In Remarks \ref{rem:gen2} and \ref{rem:gen1}, we discuss how to obtain a ``path" graph $G$ from a more general \textcolor{red}{directed multigraph}. %Define $a:= |A|$ and $n:= |N|$.

\textcolor{red}{Let $E^+=\{(i,j)\in A: i = s_N, ~j \in N \}$ and $E^- = \{(i,j)\in A: i \in N, ~j = t_N \}$.} \textcolor{red}{\st{Let $E^+=\{(i,j)\in A: i \notin N, j \in N \}$ and $E^- = \{(i,j)\in A: i \in N, j \notin N \}$.}} Moreover, let us partition the sets $E^+$ and $E^-$ such that $E_k^+ = \{(i,j)\in A: i\notin N, j=k \}$ and $E_k^- = \{(i,j)\in A: i=k, j\notin N\}$ for $k\in N$. We refer to the arcs in $E^+$ and $E^-$ as \textit{non-path arcs}. Finally, let $E := E^+ \cup E^-$ be the set of all non-path arcs. For convenience, we generalize this set notation scheme. Given an arbitrary subset of non-path arcs $Y\subseteq E$, let $Y_j^+ = Y \cap E_j^+$ and $Y_j^- = Y \cap E_j^-$. 

\begin{remark}\label{rem:gen2}
Given a \textcolor{red}{directed multigraph} $\tilde{G}= (\tilde{N}, \tilde{A})$ with nodes $\tilde{N}$, arcs $\tilde{A}$ and a path that passes through nodes $N$, we can construct $G$ as described above by letting $E^+ = \{(i,j)\in \tilde{A}: i\in \tilde{N}\setminus N, j\in N\}$ and $E^- = \{(i,j)\in \tilde{A}: i\in N, j\in \tilde{N}\setminus N\}$ and letting all the arcs in $E^+$ be the outgoing arcs from a dummy source $s_N$ and all the arcs in $E^-$ to be incoming to a dummy sink $t_N$.
\end{remark}

\begin{remark}\label{rem:gen1} If there is an arc $t=(i,j)$ from node $i\in N$ to $j\in N$, where $|i-j|>1$, then we construct a relaxation by removing arc $t$, and replacing it with two arcs $t^-\in E_i^-$ and $t^+\in E_j^+$.  \textcolor{red}{If there are multiple arcs from node $i$ to node $j$, one can repeat the same procedure.} \end{remark}

\ignore{Let $[k,j]:=\bigcup_{i=k}^{j-1}\left[(i,i+1)\cup(i+1,i)\right]$ denote the union of the directed path from node $k$ to node $j$ and the directed path from node $j$ to node $k$.}
Throughout the paper, we use the following notation: Let $[k,j]=\{k,k+1, \dots,j\}$ \textcolor{red}{if $k\leq j$ and $\emptyset$ otherwise}, $c(S) = \sum_{t\in S} c_t$, $y(S) = \sum_{t\in S} y_t$, $(a)^+ = \max\{ 0, a\}$ and $d_{kj} = \sum_{t=k}^j d_t$ if $j\geq k$ and $0$ otherwise. Moreover, let $\text{dim}(A)$ denote the dimension of a polyhedron $A$ and $\text{conv}(S)$ be the convex hull of a set $S$.

\begin{figure} [h!]
\centering
	\includegraphics[scale=1]{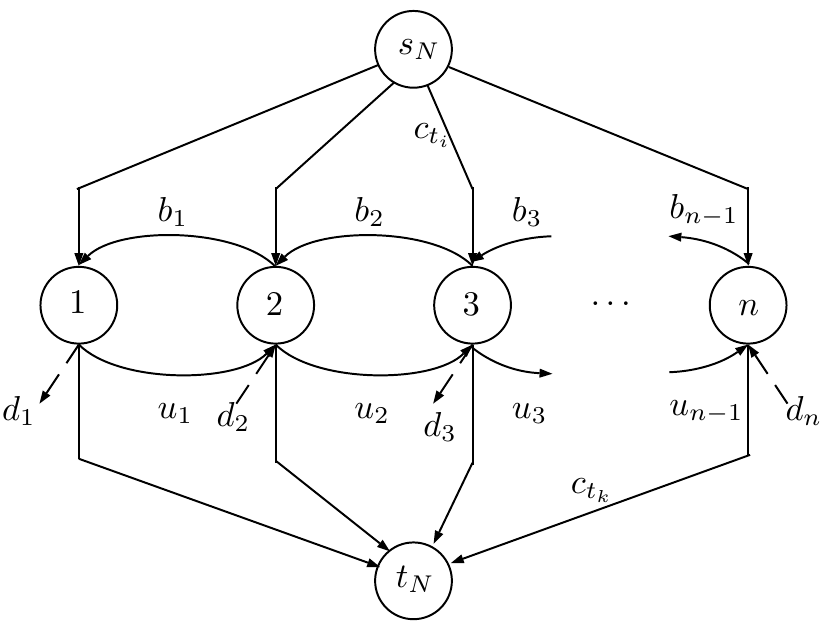}
	\caption{Fixed-charge network representation of a path.} \label{fig:lsntwk}
\end{figure}

The capacitated fixed-charge network flow problem on a path can be formulated as a mixed-integer optimization problem. \textcolor{red}{Let $d_j$ be the demand at node $j\in N$. We call a node $j\in N$ a demand node if $d_j\geq 0$ and a supply node if $d_j<0$.} Let the flow on forward path arc $(j,j+1)$ be represented by $i_j$ with an upper bound $u_j$ for $j\in N\setminus \{n\}$. Similarly, let the flow on backward path arc $(j+1,j)$ be represented by $r_j$ with an upper bound $b_j$ for $j\in N\setminus \{n\}$. Let $y_t$ be the amount of flow on arc $t\in E$ with an upper bound $c_t$. Define binary variable $x_t$ to be $1$ if $y_t>0$, and zero otherwise for all $t\in E$. An arc $t$ is \emph{closed} if $x_t=0$ and \emph{open} if $x_t=1$. Moreover, let $f_t$ be the fixed cost and $p_t$ be the unit flow cost of arc $t$. Similarly, let $h_j$ and $g_j$ be the costs of unit flow, on forward and backward arcs $(j, j+1)$ and $(j+1,j)$ respectively for $j\in N\setminus \{n\}$. Then, the problem is formulated as
\begin{subequations} \label{opt:ls}
\begin{align}
\min \quad & \sum_{t\in E} \left(f_t x_t + p_t y_t \right) + \sum_{j \in N} \left( h_j i_j + g_j r_j \right)\label{ls:first}\\
\text{s. t.} \quad & i_{j-1} - r_{j-1} + y(E_j^+) - y(E_j^-) - i_j + r_j = d_j, \quad j \in N, \label{const:flowbalance}\\
 &0\leq y_t \leq c_t x_t, \quad t\in E, \\
 &0\leq i_j \leq u_j, \quad j\in N, \\
(\text{F1})\qquad &0 \leq r_j \leq b_j, \quad j\in N, \\
& x_t \in \{0,1\}, \quad t\in E, \\
& i_0 = i_n = r_0 = r_n = 0. \label{ls:last}
\end{align}
\end{subequations}
Let $\mathcal{P}$ be the set of feasible solutions of (F\ref{opt:ls}). Figure \ref{fig:lsntwk} shows an example network representation of (F\ref{opt:ls}). \ignore{In this representation, the dummy source node $s_N$ has a supply of $d_{1n}$ and the dummy sink node $t_N$ has demand $d_{1n}$.}

Throughout we make the following assumptions on (F\ref{opt:ls}):
\begin{enumerate}[]
	\item[]\textbf{(A.1)} The set $\mathcal{P}_t=\{(x,y,i,r)\in \mathcal{P}: x_t = 0 \}\neq \emptyset$ for all $t\in E$,
	\item[]\textbf{(A.2)} $c_t > 0$, $u_j > 0$ and $b_j > 0$ for all $t\in E$ and $j\in N$,
	\item[]\textbf{(A.3)} $c_t \leq d_{1n}+c(E^-)$ for all $t\in E^+$,
	\item[]\textbf{(A.4)} $c_t \leq b_{j-1} + u_{j} + (d_j)^+ + c(E_j^-),$ for all $j \in N, t\in E_j^+ $,
	\item[]\textbf{(A.5)} $c_t \leq b_{j} + u_{j-1} + (-d_j)^+ + c(E_j^+)$ for all $j \in N, t\in E_j^-$.
\end{enumerate}
Assumptions (A.1)--(A.2) ensure that $\text{dim}\big(\text{conv}(\mathcal{P})\big) = 2|E| + |N|-2$. If (A.1) does not hold for some $t\in E$, then $x_t=1$ for all points in $\mathcal{P}$. Similarly, if (A.2) does not hold, the flow on such an arc can be fixed to zero. Finally, assumptions (A.3)--(A.5) are without loss of generality. \textcolor{red}{An upper bound on $y_t$ can be obtained directly from the flow balance equalities \eqref{const:flowbalance} by using the upper and lower bounds of the other flow variables that appear in the same constraint. As a result}, the flow values on arcs $t\in E$ cannot exceed the capacities implied by (A.3)--(A.5).

Next, we review the submodular inequalities introduced by \cite{W89} that are valid for any capacitated fixed-charge network flow problem. \textcolor{red}{Furthermore, using the path structure, we provide an $O(|E|+|N|)$ time algorithm to compute their coefficients explicitly.} \textcolor{red}{\st{Then, using the path structure, we obtain the explicit submodular inequality coefficients in $O(|E|+|N|)$ time.}}
\section{Submodular inequalities on paths} \label{sec:spi}
Let $S^+\subseteq E^+$ and $L^-\subseteq E^-$. \cite{W89} shows that the value function of the following optimization problem is submodular:
\begin{subequations} \label{opt:vdefn}
\begin{align}	
 v(S^+, L^-) = \max \quad & \sum_{t\in E} a_t y_t \label{vdefn1}\\
 \text{s. t.} \quad & i_{j-1} - r_{j-1} + y(E_j^+) - y(E_j^-) - i_j + r_j \leq d_j, \quad j \in N, \label{const:flowleq}\\
 &0\leq i_j \leq u_j, \quad j\in N, \label{vdefn2}\\
 &0\leq r_j \leq b_j, \quad j\in N, \label{vdefn3}\\
(\text{F2})\qquad &0\leq y_t \leq c_t, \quad t\in E,\label{vdefn4} \\
 & i_0 = i_n = r_0 = r_n = 0,\label{vdefn6} \\
 &y_t = 0, \quad t\in (E^+\setminus S^+)\cup L^-,\label{vdefn5}
\end{align}
\end{subequations}
where $a_t\in \{0,1\}$ for $t\in E^+$ and $a_t \in \{ 0, -1\}$ for $t\in E^-$. The set of feasible solutions of (F\ref{opt:vdefn}) is represented by $\mathcal{Q}$.

We call the sets $S^+$ and $L^-$ that are used in the definition of $v(S^+,L^-)$ the \emph{objective sets}. For ease of notation, we also represent the objective sets as $C := S^+ \cup L^-$. Following this notation, let $v(C) := v(S^+,L^-)$, $v(C\setminus \{t\}) = v(S^+\setminus\{t\},L^-)$ for $t\in S^+$ and $v(C\setminus \{t\}) = v(S^+, L^-\setminus \{t\})$ for $t\in L^-$. \textcolor{red}{Similarly, let $v(C\cup \{t\})= v(S^+\cup\{t\}, L^-)$, for $t\in S^+$ and $v(C\cup \{t\}) = v(S^+,L^-\cup \{t\})$ for $t\in L^-$.} Moreover, let $$\rho_t(C) = v(C\cup \{t\}) - v(C)$$ be the marginal contribution of adding an arc $t$ to $C$ with respect to the value function $v$. \cite{W89} shows that the following inequalities are valid for $\mathcal{P}$:
\begin{align}
	\sum_{t\in E} a_t y_t + \sum_{t\in C}\rho_t(C\setminus \{t\}) (1-\bar{x}_t) \leq v(C) + \sum_{t\in E\setminus C } \rho_t(\emptyset) \bar{x}_t, \label{ineq:submod1}\\
	\sum_{t \in E} a_t y_t +\sum_{t\in C} \rho_t(E\setminus \{t \})(1-\bar{x}_t) \leq v(C) + \sum_{t\in E\setminus C} \rho_t(C)\bar{x}_t, \label{ineq:submod2}
\end{align}
where the variable $\bar{x}_t$ is defined as $$\bar{x}_t = \begin{cases}
	x_t, \quad t \in E^+ \\
	1-x_t, \quad t \in E^-.
\end{cases}$$
\textcolor{red}{In fact, inequalities \eqref{ineq:submod1} and \eqref{ineq:submod2} are also valid for fixed-charge network flow formulations where the flow balance constraints \eqref{const:flowbalance} are replaced with constraints \eqref{const:flowleq}. However, in this paper, we focus on formulations with flow balance equalities \eqref{const:flowbalance}.}

We refer to submodular inequalities \eqref{ineq:submod1} and \eqref{ineq:submod2} derived for path structures as \emph{path inequalities}.
 In this paper, we consider sets $S^+$ and $L^-$ such that (F\ref{opt:vdefn}) is feasible for all objective sets $C$ and $C\setminus\{t\}$ for all $t\in C$.

 \ignore{
 \begin{itemize}
	\item[]\textbf{(A.6)} The coefficients $\rho_t(C\setminus \{t\})$ and $\rho_t(E\setminus\{t\})$ for all $t\in C$ and $\rho_t(C)$ for $t\in E\setminus C$ are finite.
 \end{itemize}
}
\ignore{
\begin{figure}
\centering
	\includegraphics[scale=1]{Figures/submodularity}
	\caption{Set of arcs $E$, represented by solid directed lines.} \label{fig:setN}
\end{figure}}
\subsection{Equivalence to the maximum flow problem}
Define sets $K^+$ and $K^-$ such that the coefficients of the objective function \eqref{vdefn1} are:
\begin{align} a_t = \begin{cases} 1, \quad t \in K^+ \\
	-1, \quad t \in K^- \\
	0, \quad \text{otherwise},
\end{cases} \label{coef1}
\end{align}
where $S^+\subseteq K^+\subseteq E^+$ and $K^-\subseteq E^- \setminus L^-$. We refer to the sets $K^+$ and $K^-$ as \emph{coefficient sets}. Let the set of arcs with zero coefficients in \eqref{vdefn1} be represented by $\bar{K}^+ = E^+\setminus K^+$ and $\bar{K}^-=E^- \setminus K^-$. Given a selection of coefficients as described in \eqref{coef1}, we claim that (F\ref{opt:vdefn}) can be transformed to a maximum flow problem. \textcolor{red}{We first show this result assuming $d_j\geq 0$ for all $j\in N$. Then, in Appendix \ref{app:equiv}, we show that the nonnegativity of demand is without loss of generality for the derivation of the inequalities.} \textcolor{red}{\st{We first show this result assuming $d_j\geq 0$ for all $j\in N$; however, we show in Appendix A that for the derivation of the inequalities the nonnegativity of demand is without loss of generality through an appropriate transformation.}}

\begin{proposition} \label{prop:zeros}
Let $S^+\subseteq E^+$ and $L^-\subseteq E^-$ be the objective sets in (F\ref{opt:vdefn}) and let $\mathcal{Y}$ be the nonempty set of optimal solutions of (F\ref{opt:vdefn}). If $d_j\geq 0$ for all $j\in N$, then there exists at least one optimal solution $(\textbf{y}^*,\textbf{r}^*,\textbf{i}^*) \in \mathcal{Y}$ such that $y_t^* = 0$ for $t\in \bar{K}^+ \cup K^- \cup L^-$.
\end{proposition}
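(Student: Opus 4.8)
The plan is as follows. First, the conclusion is automatic on $\bar{K}^+\cup L^-$: since $S^+\subseteq K^+$ we have $\bar{K}^+=E^+\setminus K^+\subseteq E^+\setminus S^+$, so constraint \eqref{vdefn5} already forces $y_t=0$ for every $t\in\bar{K}^+\cup L^-$. Hence it suffices to produce a solution in the optimal set $\mathcal{Y}$ with $y_t=0$ for all $t\in K^-$. Since the feasible region of (F\ref{opt:vdefn}) is a nonempty bounded polyhedron (every flow variable has a finite capacity), $\mathcal{Y}$ is a nonempty compact face, so the linear functional $W:=\sum_{t\in K^-}y_t$ attains a minimum on it; choose $(\mathbf{y}^*,\mathbf{r}^*,\mathbf{i}^*)\in\mathcal{Y}$ achieving it. If $W=0$ there we are done, so suppose $W>0$ and fix $t_0\in K^-\cap E_{j_0}^-$ with $y^*_{t_0}>0$. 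The goal is to exhibit another optimal solution with strictly smaller $W$, a contradiction.

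Regard $(\mathbf{y}^*,\mathbf{r}^*,\mathbf{i}^*)$ as a flow: arcs of $E^+$ carry flow out of $s_N$, arcs of $E^-$ carry flow into $t_N$, the forward and backward path arcs carry $\mathbf{i}^*,\mathbf{r}^*$ between path nodes, and by \eqref{const:flowleq} node $j$ has net absorption $f_j:=(\text{inflow})-(\text{outflow})\le d_j$. Apply the standard flow-decomposition algorithm, writing this flow as a sum of directed cycles (supported on path arcs) and simple directed paths, each running from $s_N$ or from a node with $f_j<0$ to $t_N$ or to a node with $f_{j'}>0$; run it so that no node is simultaneously the origin of one path and the terminus of another. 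Now discard every cycle and every path whose first arc is not in $E^+$. Such a path, if it ends at $t_N$, does so through an arc of $E^-$ that cannot lie in $K^-$ --- otherwise deleting that path would strictly increase the objective, contradicting optimality --- hence through an arc of $\bar{K}^-\setminus L^-$ of coefficient $0$; every other arc on such a path or cycle is a path arc, also of coefficient $0$. Therefore discarding all of them leaves the objective value and $W$ unchanged, and preserves feasibility of (F\ref{opt:vdefn}): all arc flows only decrease (staying nonnegative), each $f_{j'}>0$ only decreases, and each $f_j<0$ only increases, up to at most $0\le d_j$ --- this last point is where $d_j\ge 0$ is used. The resulting optimal solution $(\tilde{\mathbf{y}},\tilde{\mathbf{r}},\tilde{\mathbf{i}})$ still has $W>0$ and now has $f_j\ge 0$ for every $j$, so it decomposes into paths issuing from $s_N$ only, each ending at $t_N$ or at a node $j'$ with $f_{j'}>0$.

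Since $W>0$, arc $t_0\in K^-$ carries positive flow in $(\tilde{\mathbf{y}},\tilde{\mathbf{r}},\tilde{\mathbf{i}})$; as it enters $t_N$, it lies on some $s_N$-to-$t_N$ path $P$ of the decomposition, of value $p>0$, whose first arc $s\in E^+$ has positive flow and hence lies in $S^+\subseteq K^+$, so $a_s=1$. Decrease the flow along $P$ by $p$: replace $y_s$ by $y_s-p$ and $y_{t_0}$ by $y_{t_0}-p$, and decrease by $p$ each path-arc flow that $P$ traverses in between. Every node on $P$ loses exactly $p$ of both inflow and outflow, so all $f_j$ are unchanged and all constraints of (F\ref{opt:vdefn}) remain satisfied (the modified variables only decrease and stay nonnegative). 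The objective changes by $a_s(-p)+a_{t_0}(-p)=-p+p=0$, so the new point is again optimal, while its value of $W$ has dropped by $p>0$; this contradicts the choice of $(\mathbf{y}^*,\mathbf{r}^*,\mathbf{i}^*)$. Hence $W=0$, and together with the automatic vanishing on $\bar{K}^+\cup L^-$ this proves the statement.

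The step I expect to be delicate is the flow-decomposition normalization. Because \eqref{const:flowleq} are inequalities, $(\mathbf{y}^*,\mathbf{r}^*,\mathbf{i}^*)$ need not be conservative, and one must argue that whatever flow a node ``generates'' (the case $f_j<0$) can be stripped off without changing the optimal objective value; controlling this cleanly requires choosing the decomposition carefully so that removing an individual path never overshoots a node balance, and using $d_j\ge 0$ to keep the modified node relations valid. Everything else is bookkeeping on the decomposition and on the signs of the coefficients $a_t$.
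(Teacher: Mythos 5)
Your proof is correct, and it reaches the conclusion by a noticeably more systematic route than the paper. The paper's argument is local and per-arc: for an optimal solution with $y^*_t=\epsilon>0$ on some $t\in K_j^-$, it examines the slack $s_j$ of node $j$'s constraint \eqref{const:flowleq}, reduces $y_t$ directly when $s_j\ge\epsilon$ (which would strictly improve the objective), and otherwise asserts the existence of a simple directed path into $j$ carrying at least $\epsilon-s_j$ on every arc, cancelling that much flow along the path and on $t$ simultaneously; it then iterates. You instead make an extremal choice (minimize $W=\sum_{t\in K^-}y_t$ over the compact optimal face), perform a full flow decomposition, strip off all cycles and all internally generated paths in a preprocessing pass --- which is exactly where $d_j\ge 0$ enters, to keep the relaxed node constraints valid as the origin nodes' excesses rise to $0$ --- and then cancel an entire $s_N$--$t_N$ path through the offending $K^-$ arc, whose first arc necessarily lies in $S^+\subseteq K^+$ by \eqref{vdefn5}, so the objective is unchanged while $W$ strictly drops. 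What your version buys is rigor at the two points the paper leaves implicit: the existence and feasibility of the cancelling path (your decomposition guarantees both, including feasibility at the path's origin, which the paper does not address when the path starts at an internal node $k$), and termination (your variational setup replaces the paper's informal iteration with a one-shot contradiction). The cost is more machinery; the paper's argument, suitably patched, is shorter. Both proofs use $d_j\ge 0$ in the same essential way, and both correctly dispose of $\bar K^+\cup L^-$ via constraint \eqref{vdefn5} at the outset.
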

\begin{proof}
	Observe that $y^*_t = 0$ for all $t\in E^+\setminus S^+$, due to constraints \eqref{vdefn5}. Since $\bar{K}^+\subseteq E^+\setminus S^+$, $y_t^*=0$, for $t\in \bar{K}^+$ from feasibility of (F\ref{opt:vdefn}). Similarly, $y_t^*=0$ for all $t\in L^-$ by constraints \eqref{vdefn5}.
	
	Now suppose that, $y_t^* = \epsilon > 0$ for some $t\in K_j^-$ (i.e., $a_t=-1$ for arc $t$ in (F\ref{opt:vdefn})). Let the slack value at constraint \eqref{const:flowleq} for node $j$ be $$s_j = d_j -\left[i_{j-1}^* - r_{j-1}^* +y^*(E_j^+) - y^*(E_j^-\setminus \{t\}) -y^*_t -i_j^* + r_j^* \right].$$
	
	If $s_j \geq \epsilon$, then decreasing $y_t^*$ by $\epsilon$ both improves the objective function value and \textcolor{red}{conserves the feasibility of flow balance inequality \eqref{const:flowleq} for node $j$, since $s_j - \epsilon \geq 0$.} 
		
	If $s_j < \epsilon$, then decreasing $y_t^*$ by $\epsilon$ violates flow balance inequality since $s_j - \epsilon < 0$. In this case, there must exist a simple directed path $P$ from either the source node $s_N$ or a node $k\in N\setminus \{j\}$ to node $j$ where all arcs have at least a flow of $(\epsilon - s_j)$. This is guaranteed because, $s_j<\epsilon$ implies that, without the outgoing arc $t$, there is more incoming flow to node $j$ than outgoing. Then, notice that decreasing the flow on arc $t$ and all arcs in path $P$ by $\epsilon - s_j$ conserves feasibility. \textcolor{red}{Moreover, the objective function value either remains the same or increases, because decreasing $y_t$ by $\epsilon-s_j$ increases the objective function value by $\epsilon-s_j$ and the decreasing the flow on arcs in $P$ decreases it by at most $\epsilon-s_j$.} At the end of this transformation, the slack value $s_j$ does not change, however; the flow at arc $t$ is now $y_t^*=s_j$ which is equivalent to the first case that is discussed above. As a result, we obtain a new solution to (F\ref{opt:vdefn}) where $y_t^* = 0$ and the objective value is at least as large.
\ignore{	
	An alternative way to prove this proposition is by duality. Let $\mathbf{p}$ be the vector of dual variables of (F\ref{opt:vdefn}). We observe that the dual constraints that correspond to primal variables $y_t$ for $t\in K_j^-$ are $$-p_i + p_k \geq -1,$$ where the dual variable $p_i$ corresponds to the flow balance inequality \eqref{const:flowleq} for node $j$ and the dual variable $p_k$ corresponds to capacity constraint $y_t\leq c_t$. }
\end{proof}	
\begin{proposition} \label{prop:maxflequiv}
If $d_j \geq 0$ for all $j\in N$, then (F\ref{opt:vdefn}) is equivalent to a maximum flow problem from source $s_N$ to sink $t_N$ \textcolor{red}{on graph $G$}.
\end{proposition}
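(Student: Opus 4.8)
The plan is to exhibit an explicit max-flow network $\hat G$ obtained from $G$ together with a value-preserving correspondence between its $s_N$--$t_N$ flows and the feasible solutions of (F\ref{opt:vdefn}) that are normalized in the sense of Proposition \ref{prop:zeros}. Fix the coefficient sets $K^+,K^-$ as in \eqref{coef1}. By Proposition \ref{prop:zeros} it is enough to optimize over solutions with $y_t=0$ on $\bar K^+\cup K^-\cup L^-$; on these the objective $\sum_{t\in E}a_ty_t$ collapses to $y(S^+)$, since the coefficient-$(-1)$ arcs carry no flow, the arcs of $E^+\setminus S^+$ carry no flow by \eqref{vdefn5}, and $S^+\subseteq K^+$ forces $K^+\setminus S^+\subseteq E^+\setminus S^+$. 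Thus the objective equals the total flow dispatched from $s_N$.

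Accordingly, I would take $\hat G$ to have the path nodes $1,\dots,n$; the forward arcs $(j,j+1)$ with capacity $u_j$ and backward arcs $(j+1,j)$ with capacity $b_j$; the arcs of $S^+$ out of $s_N$ with capacity $c_t$ (all other arcs of $E^+$ deleted, consistent with \eqref{vdefn5}); the arcs of $E^-\setminus(K^-\cup L^-)=\bar K^-\setminus L^-$ into $t_N$ with capacity $c_t$ (arcs of $L^-$ deleted by \eqref{vdefn5}, arcs of $K^-$ deleted as licensed by Proposition \ref{prop:zeros}); and, for each $j\in N$, an arc from $j$ to $t_N$ of capacity $d_j$ that realizes the demand inside the network. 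Here the hypothesis $d_j\ge0$ is used three times: it is the hypothesis of Proposition \ref{prop:zeros}, it makes the zero solution feasible so $\mathcal Y\neq\emptyset$, and it makes the demand-arc capacities legitimate.

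Then I would verify the equivalence in both directions. Given a feasible flow of value $F$ in $\hat G$, conservation at node $j$ reads $i_{j-1}-r_{j-1}+y(S_j^+)-y(\bar K_j^-\setminus L_j^-)-i_j+r_j=\delta_j$ with $0\le\delta_j\le d_j$ the flow on the demand arc $(j,t_N)$; extending by $y_t=0$ on all deleted arcs gives a feasible point of (F\ref{opt:vdefn}) (constraint \eqref{vdefn5} holds, and \eqref{const:flowleq} holds because $\delta_j\le d_j$) whose objective is $y(S^+)=F$. Conversely, from any feasible solution of (F\ref{opt:vdefn}), Proposition \ref{prop:zeros} yields one with $y_t=0$ on $\bar K^+\cup K^-\cup L^-$ and objective no smaller; its slack in \eqref{const:flowleq} at node $j$ is a number in $[0,d_j]$, which we route on the demand arc, producing a feasible flow in $\hat G$ of value $y(S^+)=\sum_{t\in E}a_ty_t$. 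Hence the maximum flow value on $\hat G$ equals $v(S^+,L^-)$, and an optimal flow (we may cancel flow-carrying cycles, which change neither the flow value nor the objective of (F\ref{opt:vdefn}) since the latter ignores $i_j,r_j$) yields an optimal solution of (F\ref{opt:vdefn}).

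The two constructions are routine; the step needing care is the bookkeeping of which arcs survive and why their removal is lossless. The delicate point is that the arcs of $K^-$ must be \emph{deleted}, not merely assigned coefficient $0$: leaving them in as capacitated routes to $t_N$ could strictly raise the max-flow value above $v(S^+,L^-)$, and it is precisely Proposition \ref{prop:zeros} that guarantees an optimal solution avoiding them, so deleting them is safe. The rest, including the handling of backward-arc cycles and the matching of slack with demand-arc flow, is straightforward.
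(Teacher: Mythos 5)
Your construction is essentially the paper's proof made explicit: the paper drops the variables forced to zero by Proposition \ref{prop:zeros} and constraints \eqref{vdefn5}, then converts constraint \eqref{const:flowleq} into an equality via a slack variable $z_j$ with $0\le z_j\le d_j$, which is exactly your demand arc $(j,t_N)$ of capacity $d_j$. One minor wording slip: in the reverse direction the quantity routed on the demand arc should be the left-hand side of \eqref{const:flowleq} (the net inflow at node $j$), not its slack, which is what you correctly used in the forward direction; otherwise the argument matches the paper's.
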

\begin{proof}
\textcolor{red}{At the optimal solution of problem (F\ref{opt:vdefn}) with objective set $(S^+, L^-)$, the decision variables $y_t$, for $t\in (E^+\setminus S^+) \cup K^- \cup L^-$ can be assumed to be zero due to Proposition \ref{prop:zeros} and constraints \eqref{vdefn5}. Then, these variables can be dropped from (F\ref{opt:vdefn}) since the value $v(S^+,L^-)$ does not depend them and formulation (F\ref{opt:vdefn}) reduces to}
\begin{align}
v(S^+,L^-) = \max \big\{ y(S^+):~ &i_{j-1}-r_{j-1}+y(S^+_j) - y(\bar{K}_j^-) - i_j +r_j\leq d_j, ~j\in N, \notag\\ &\eqref{vdefn2}-\eqref{vdefn6} \big\}. \label{opt:vreform}
\end{align}
Now, we reformulate \eqref{opt:vreform} by representing the left hand side of the flow balance constraint by a new nonnegative decision variable $z_j$ that has an upper bound of $d_j$ for each $j\in N$:
\begin{align*}
\max \big\{ y(S^+):~ &i_{j-1}-r_{j-1}+y(S^+_j) - y(\bar{K}_j^-) - i_j +r_j = z_j,~j\in N, \\ &0\leq z_j\leq d_j, ~j\in N, \quad \eqref{vdefn2}-\eqref{vdefn6}\big\}.
\end{align*}
The formulation above is equivalent the maximum flow formulation from the source node $s_N$ to the sink node $t_N$ for the path structures we are considering in this paper.
\end{proof}

Under the assumption that $d_j\geq 0$ for all $j\in N$, Proposition \ref{prop:zeros} and Proposition \ref{prop:maxflequiv} together show that the optimal objective function value $v(S^+,L^-)$ can be computed by solving a maximum flow problem from source $s_N$ to sink $t_N$. We generalize this result in Appendix \ref{app:equiv} for node sets $N$ such that $d_j<0$ for some $j\in N$. As a result, obtaining the explicit coefficients of submodular inequalities \eqref{ineq:submod1} and \eqref{ineq:submod2} reduces to solving $|E|+1$ maximum flow problems. For a general underlying graph, solving $|E|+1$ maximum flow problems would take $O(|E|^2 |N|)$ time (e.g., see \cite{maxflow2}), where $|E|$ and $|N|$ are the number of arcs and nodes, respectively. In the following subsection, by utilizing the equivalence of maximum flow and minimum cuts and the path structure, we show that all coefficients of \eqref{ineq:submod1} and \eqref{ineq:submod2} can be obtained in $O(|E|+|N|)$ time using dynamic programming.

\subsection{Computing the coefficients of the submodular inequalities}
Throughout the paper, we use minimum cut arguments to find the explicit coefficients of inequalities \eqref{ineq:submod1} and \eqref{ineq:submod2}. \textcolor{red}{Figure \ref{fig:mincut1} illustrates an example where $N = [1,5]$, $E^+=[1,5]$, $E^- = [6,10]$ and in Figure \ref{fig:mincut2}, we give an example of an $s_N-t_N$ cut for $S^+  =\{2,4,5\}$, $L^- = \{10\}$ and $\bar{K}^- = \{7, 9,10\}$. The dashed line in Figure \ref{fig:mincut2} represents a cut that corresponds to the partition $\{s_N, 2, 5\}$ and $\{t_N, 1,3,4\}$ with a value of $b_1 + d_2 + c_7 +u_2 + c_4 + b_4+d_5 $. } Moreover, we say that a cut \emph{passes below node $j$} if $j$ is in the source partition and \emph{passes above node $j$} if $j$ is in the sink partition.

\begin{figure}
\centering
	\begin{subfigure}[b]{0.75\textwidth}
	        \centering
	        \includegraphics[scale=1]{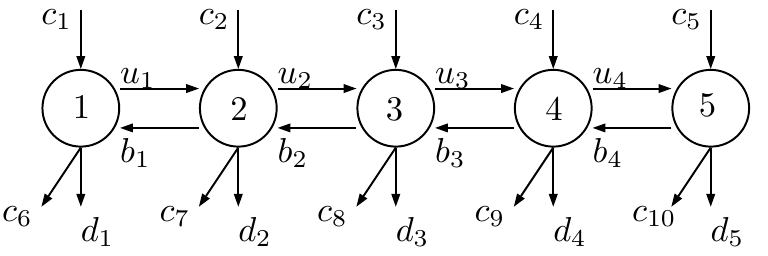}
	        \caption{\textcolor{red}{A path graph with $E^+=[1,5]$ and $E^-=[6,10]$.}} \label{fig:mincut1}
	  \end{subfigure}
	  
	  \vspace{0.4cm} 	    
	  \begin{subfigure}[b]{0.75\textwidth}
	        \centering
	        \includegraphics[scale=1]{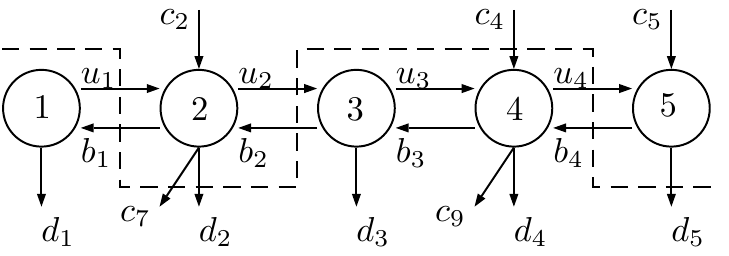}
	        \caption{\textcolor{red}{An $s_N-t_N$ cut for set $S^+=\{2,4,5\}$, $L^-=\{10\}$ and $\bar{K}^-=\{7,9,10\}$.}}\label{fig:mincut2}
	    \end{subfigure}
	\caption{\textcolor{red}{An example of an $s_N-t_N$ cut.}} \label{fig:mincut}
\end{figure}

Let $\alpha_j^u$ and $\alpha_j^d$ be the minimum value of a cut on nodes $[1,j]$ that passes above and below node $j$, respectively. Similarly, let $\beta_j^u$ and $\beta_j^d$ be the minimum values of cuts on nodes $[j,n]$ that passes above and below node $j$ respectively. Finally, let $$S^- = E^- \setminus (K^-\cup L^-),$$ where $K^-$ is defined in \eqref{coef1}. Recall that $S^+$ and $L^-$ are the given objective sets. Given the notation introduced above, all of the arcs in sets $S^-$ and $L^-$ have a coefficient zero in (F\ref{opt:vdefn}). Therefore, dropping an arc from $L^-$ is equivalent to adding that arc to $S^-$. We compute $\alpha_j^{\{u,d\}}$ by a forward recursion and $\beta_j^{\{u,d\}}$ by a backward recursion:
\begin{align}
	&\alpha_j^u = \min \{ \alpha_{j-1}^d + u_{j-1}, \alpha_{j-1}^u \} + c(S_j^+) \label{eqn:alphau1} \\
	&\alpha_j^d = \min \{ \alpha_{j-1}^d, \alpha_{j-1}^u + b_{j-1} \} + d_j + c(S_j^-), \label{eqn:alphad1}
\end{align}
where $\alpha_0^u = \alpha_0^d = 0$ and
\begin{align}
	&\beta_j^u = \min \{ \beta_{j+1}^u, \beta_{j+1}^d + b_{j} \} + c(S_j^+) \label{eqn:betau1}\\
	&\beta_j^d = \min \{ \beta_{j+1}^u + u_{j}, \beta_{j+1}^d \} + d_j + c(S_j^-), \label{eqn:betad1}
\end{align}
where $\beta_{n+1}^u = \beta_{n+1}^d = 0$.

Let $m_j^u$ and $m_j^d$ be the values of minimum cuts for nodes $[1,n]$ that pass above and below node $j$, respectively. Notice that \begin{align} m_j^u = \alpha_j^u + \beta_j^u - c(S_j^+)\label{mincutup} \end{align} and \begin{align}m_j^d = \alpha_j^d + \beta_j^d - d_j - c(S_j^-). \label{mincutdown} \end{align} For convenience, let $$m_j := \min \{ m_j^u , m_j^d\}.$$ Notice that $m_j$ is the minimum of the minimum cut values that passes above and below node $j$. Since the minimum cut corresponding to $v(C)$ has to pass either above or below node $j$, $m_j$ is equal to $v(C)$ for all $j\in N$. As a result, the minimum cut (or maximum flow) value for the objective set $C = S^+ \cup L^-$ is \begin{align} v(C) = m_1 = \dots = m_n. \label{mincutandm}\end{align}

\begin{proposition}
All values $m_j$, for $j\in N$, can be computed in $O(|E|+|N|)$ time.
\end{proposition}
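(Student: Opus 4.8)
The plan is to observe that the recursions \eqref{eqn:alphau1}--\eqref{eqn:alphad1} and \eqref{eqn:betau1}--\eqref{eqn:betad1} are precisely the kind of dynamic-programming recursions that can be evaluated in a single forward and a single backward sweep over the path, and that combining their outputs via \eqref{mincutup}--\eqref{mincutdown} costs only constant work per node. The one thing that needs genuine care is the bookkeeping cost of the terms $c(S_j^+)$ and $c(S_j^-)$ (and of $c(E_j^+), c(E_j^-)$, $d_j$, $u_j$, $b_j$): each individual $c(S_j^+)$ can involve many arcs, so naively recomputing these sums would give an $O(|E|\cdot|N|)$ bound rather than $O(|E|+|N|)$.

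First I would note that every non-path arc $t\in E$ belongs to exactly one set $E_j^+$ or $E_j^-$; hence one pass through $E$ suffices to compute, for every $j\in N$, the quantities $c(E_j^+)$, $c(E_j^-)$, and — using the membership of each arc in the objective/coefficient sets $S^+$, $L^-$, $K^-$, which we assume given — the quantities $c(S_j^+)$ and $c(S_j^-)$, where $S^- = E^-\setminus(K^-\cup L^-)$. This precomputation is $O(|E|+|N|)$ in total: $O(|E|)$ to scan the arcs and accumulate into the per-node buckets, plus $O(|N|)$ to initialize the buckets for nodes with no incident non-path arcs. The demand values $d_j$ and capacities $u_j,b_j$ are part of the input and need no preprocessing.

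Next I would run the forward recursion: initialize $\alpha_0^u=\alpha_0^d=0$ and, for $j=1,\dots,n$, evaluate \eqref{eqn:alphau1} and \eqref{eqn:alphad1} using the already-tabulated $c(S_j^+),c(S_j^-),d_j,u_{j-1},b_{j-1}$; each step is $O(1)$, so the whole sweep is $O(|N|)$. Symmetrically, initialize $\beta_{n+1}^u=\beta_{n+1}^d=0$ and run \eqref{eqn:betau1}--\eqref{eqn:betad1} backward, also in $O(|N|)$. Finally, for each $j\in N$ compute $m_j^u$ and $m_j^d$ from \eqref{mincutup}--\eqref{mincutdown} and set $m_j=\min\{m_j^u,m_j^d\}$; this is another $O(|N|)$ work. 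Summing the phases gives $O(|E|+|N|)$ overall, which is the claim.

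The main obstacle — really the only subtlety — is making the per-node set-sum precomputation rigorous: one must argue that the sets $\{E_j^+\}_{j\in N}$ partition $E^+$ and $\{E_j^-\}_{j\in N}$ partition $E^-$ (immediate from the definitions $E_j^\pm = \{(i,j)\in A: \dots\}$), so that a single linear scan, with each arc classified by its membership in $S^+$, $L^-$, $K^-$ in $O(1)$ time, correctly accumulates all the $c(S_j^+)$ and $c(S_j^-)$ values without any double counting or omission. Once that is in place, the correctness of the recursions is inherited from \eqref{eqn:alphau1}--\eqref{mincutandm}, which are given, and only the running-time accounting remains, which is the straightforward phase-by-phase sum above.
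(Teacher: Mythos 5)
Your proposal is correct and follows exactly the argument the paper intends (the paper in fact states this proposition without an explicit proof, relying on the evident single-sweep evaluability of the recursions \eqref{eqn:alphau1}--\eqref{eqn:betad1} and the combination step \eqref{mincutup}--\eqref{mincutdown}). Your added care about precomputing the per-node sums $c(S_j^+)$ and $c(S_j^-)$ in one $O(|E|)$ pass over the arcs, using that the sets $E_j^{\pm}$ partition the non-path arcs, is precisely the detail needed to justify the $O(|E|+|N|)$ bound rather than $O(|E|\cdot|N|)$.
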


Obtaining the explicit coefficients of inequalities \eqref{ineq:submod1} and \eqref{ineq:submod2} also requires finding $v(C\setminus \{t\})$ for $t\in C$ and $v(C\cup \{t\})$ for $t\notin C$ in addition to $v(C)$. It is important to note that we do not need to solve the recursions above repeatedly. Once the values $m_j^u$ and $m_j^d$ are obtained for the set $C$, the marginals $\rho_t(C\setminus \{t\})$ and $\rho_t(C)$ can be found in $O(1)$ time for each $t\in E$.

We use the following observation while providing the marginal values $\rho_t(C)$ and $\rho_t(C\setminus \{t\})$ in closed form.

\begin{observation} \label{obs:difference}
Let $c\geq 0$ and $d := (b-a)^+$, then,
\begin{enumerate}[1.]
	\item[]1. $\min \{ a+c , b\} - \min \{ a,b \} = \min \{ c, d \},$
	\item[]2. $\min\{a, b\} - \min \{ a, b - c  \} = (c - d)^+$.
\end{enumerate}
\end{observation}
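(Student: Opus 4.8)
The plan is to prove both identities by elementary case analysis on the sign of $b-a$, since this is exactly what controls the value of $d=(b-a)^+$: either $a \geq b$, in which case $d=0$, or $a<b$, in which case $d = b-a > 0$. The hypothesis $c \geq 0$ will be used repeatedly to resolve the remaining comparisons. I expect no genuine obstacle here; the only thing to watch is the bookkeeping that links the piecewise definition of $d$ to the inner minima, together with the boundary cases $a=b$ and $c=d$.

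For part 1, I would first rewrite the left-hand side by pulling the constant $a$ out of each minimum, using $\min\{a+c,b\} = a + \min\{c, b-a\}$ and $\min\{a,b\} = a + \min\{0, b-a\}$, so that the difference collapses to $\min\{c, b-a\} - \min\{0, b-a\}$ with the copies of $a$ cancelling. Then I would split into the two cases above. When $a < b$ we have $\min\{0, b-a\}=0$ and $\min\{c,b-a\} = \min\{c, d\}$ directly, so the difference equals $\min\{c,d\}$. When $a \geq b$ we have $b - a \leq 0 \leq c$, so $\min\{c, b-a\} = b-a = \min\{0, b-a\}$ and the difference is $0$, which equals $\min\{c, d\} = \min\{c, 0\} = 0$ because $c \geq 0$ and $d=0$. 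This settles part 1.

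For part 2, I would proceed by the same case split, this time comparing $a$, $b$, and the shifted value $b-c$. When $a \geq b$, then $d=0$ and, since $c \geq 0$, we get $b - c \leq b \leq a$, so $\min\{a,b\}=b$ and $\min\{a,b-c\}=b-c$, giving a difference of $c = (c-0)^+ = (c-d)^+$. When $a < b$, then $d = b-a$ and $\min\{a,b\}=a$; here a further subsplit on whether $c \leq d$ or $c > d$ determines $\min\{a, b-c\}$: if $c \leq d$ then $b-c \geq a$, so the difference is $0 = (c-d)^+$, while if $c>d$ then $b-c<a$, so the difference is $a-(b-c) = c-(b-a) = c-d = (c-d)^+$. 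This exhausts all cases.

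Since the statement is a purely algebraic fact about real numbers, the argument needs only the monotonicity of $\min$ and the hypothesis $c \geq 0$. The ``hard part,'' such as it is, is merely choosing the case split (on whether $a \geq b$, equivalently on whether $d=0$) that keeps both identities short, and checking that the boundary cases $a=b$ and $c=d$ are covered consistently by the closed-form expressions $\min\{c,d\}$ and $(c-d)^+$.
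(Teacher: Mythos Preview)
Your proof is correct; the case analysis on the sign of $b-a$ handles both identities cleanly and the boundary cases are covered. The paper itself offers no proof of this observation---it is stated as self-evident---so there is nothing to compare against.
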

\textcolor{red}{\st{In the remainder of this section, we give the coefficients $\rho_t$ for inequalities (3) and (4) explicitly.}}
\textcolor{red}{In the remainder of this section, we give a linear-time algorithm to compute the coefficients $\rho_t$ for inequalities} \eqref{ineq:submod1} and \eqref{ineq:submod2} explicitly \textcolor{red}{for paths.}

\subsection*{Coefficients of inequality \eqref{ineq:submod1}: Path cover inequalities}
Let $S^+$ and $L^-$ be the objective sets in (F\ref{opt:vdefn}) and $S^-\subseteq E^-\setminus L^-$. We select the coefficient sets in \eqref{coef1} as $K^+ = S^+$ and $K^- = E^-\setminus (L^-\cup S^-)$ to obtain the explicit form of inequality \eqref{ineq:submod1}. As a result, the set definition of $S^-=E^-\setminus (K^-\cup L^-)$ is conserved.
\begin{definition}
	Let the coefficient sets in \eqref{coef1} be selected as above and $(S^+,L^-)$ be the objective set. The set $(S^+,S^-)$ is called a \emph{path cover} for the node set $N$ if $$v(S^+,L^-) = d_{1n} + c(S^-).$$
\end{definition}
For inequality \eqref{ineq:submod1}, we assume that the set $(S^+, S^-)$ is a path cover for $N$. Then, by definition, $$v(C) = m_1 = \dots = m_n = d_{1n} + c(S^-).$$
After obtaining the values $m_j^u$ and $m_j^d$ for a node $j\in N$ using recursions in \eqref{eqn:alphau1}-\eqref{eqn:betad1}, it is trivial to find the minimum cut value after dropping an arc $t$ from $S_j^+$:
$$v(C \setminus \{t\}) = \min \{ m_j^u - c_t, m_j^d \}, \quad t\in S_j^+, \quad  j\in N.$$ Similarly, dropping an arc $t\in L_j^-$ results in the minimum cut value:
$$v(C \setminus \{ t\}) = \min \{ m_j^u, m_j^d + c_t \}, \quad t\in L_j^-, \quad  j\in N.$$
Using Observation \ref{obs:difference}, we obtain the marginal values $$\rho_t(C\setminus \{t\}) = (c_t-\lambda_j)^+, \quad t\in S^+$$ and $$\rho_t(C\setminus \{t\}) = \min\{\lambda_j, c_t\}, \quad t\in L^-$$ where $$\lambda_j = (m_j^u - m_j^d)^+, \quad j\in N.$$

On the other hand, all the coefficients $\rho_t(\emptyset) = 0$ for arcs $t\in E\setminus C$. First, notice that, for $t\in E^+\setminus S^+$, $v(\{t\}) = 0$, because the coefficient $a_t=0$ for $t\in E^+\setminus S^+$. Furthermore, $v(\{t\})=0$ for $t\in E^-\setminus L^-$, since all incoming arcs would be closed for an objective set $(\emptyset, \{t\})$. As a result, inequality \eqref{ineq:submod1} for the objective set $(S^+, L^-)$ can be written as
\begin{align}
	y(S^+) + \sum_{t\in S^+} (c_t - \lambda_j)^+(1-x_t) \leq  d_{1n} + c(S^-) + \sum_{t\in L^-} \min \{ c_t, \lambda_j\}x_t + y(E^-\setminus(L^-\cup S^-)). \label{ineq:submod1exp}
\end{align}
We refer to inequalities  \eqref{ineq:submod1exp} as \emph{path cover inequalities}.
\begin{remark}
Observe that for a path consisting of a single node $N = \{j\}$ with demand $d:=d_j > 0$, the path cover inequalities \eqref{ineq:submod1exp} reduce to the flow cover inequalities \citep{PvRW85,vRW86}. Suppose that the path consists of a single node $N = \{j\}$ with demand $d:=d_j > 0$. Let $S^+\subseteq E^+$ and $S^- \subseteq E^-$. The set $(S^+,S^-)$ is a flow cover if $\lambda := c(S^+) - d-c(S^-) > 0$ and the resulting path cover inequality \begin{align} y(S^+) + \sum_{t\in S^+} (c_t - \lambda)^+ (1-x_t) \leq d + c(S^-) + \lambda x(L^-) + y(E^-\setminus L^{-}) \label{ineq:fc}\end{align}
is a flow cover inequality.
\end{remark}
\begin{proposition} \label{rem:fcpc}
	Let $(S^+,S^-)$ be a path cover for the node set $N$. The path cover inequality for node set $N$ is at least as strong as the flow cover inequality for the single node relaxation obtained by merging the nodes in $N$.
\end{proposition}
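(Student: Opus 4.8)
The plan is to write both inequalities in their explicit closed forms --- \eqref{ineq:submod1exp} for the path cover inequality on $N$, and \eqref{ineq:fc} for the flow cover inequality of the merged node --- and to compare them coefficient by coefficient. The only non-routine ingredient is a bound linking the node-dependent parameters $\lambda_j$ of the path cover inequality to the single parameter $\lambda$ of the flow cover inequality.

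First I would fix the single-node relaxation. Merging the nodes of $N$ into one node (the forward and backward path arcs becoming internal) yields a single-node fixed-charge flow set with demand $d_{1n}$, incoming arcs $E^+$ and outgoing arcs $E^-$; summing the flow balance equations \eqref{const:flowbalance} over $j\in N$ and telescoping the $i$- and $r$-terms gives exactly $y(E^+)-y(E^-)=d_{1n}$, so this set is a relaxation of the projection of $\mathcal{P}$ onto $(x_t,y_t)_{t\in E}$. Since $(S^+,S^-)$ is a path cover, $v(C)=v(S^+,L^-)=d_{1n}+c(S^-)$, and since $v(S^+,L^-)=\max\{y(S^+):\ldots\}\le c(S^+)$ by the capacity bounds $y_t\le c_t$, the number $\lambda:=c(S^+)-d_{1n}-c(S^-)=c(S^+)-v(C)$ is nonnegative; hence $(S^+,S^-)$ is a flow cover for the merged node, with the same objective set $L^-$, and its flow cover inequality \eqref{ineq:fc} (with $d=d_{1n}$) is available for the comparison.

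The crux is the claim that $\lambda_j\le\lambda$ for every $j\in N$. Recall $\lambda_j=(m_j^u-m_j^d)^+$, and by \eqref{mincutandm} $v(C)=\min\{m_j^u,m_j^d\}$, so $m_j^d\ge v(C)$. For $m_j^u$ I would use the minimum-cut interpretation: $m_j^u$ is the least value of an $s_N$--$t_N$ cut that puts node $j$ on the sink side, and the cut $(\{s_N\},\{t_N\}\cup N)$ that puts \emph{all} of $N$ on the sink side is one such cut; since it severs exactly the arcs of $S^+$ (every remaining arc joins two sink-side vertices), $m_j^u\le c(S^+)$. Therefore $m_j^u-m_j^d\le c(S^+)-v(C)=\lambda$, and because $\lambda\ge 0$ this gives $\lambda_j=(m_j^u-m_j^d)^+\le\lambda$.

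Finally I would chain the term-by-term comparison over points with $0\le x\le 1$, $y\ge 0$. The two inequalities share the term $y(S^+)$. On the left, $\lambda_j\le\lambda$ gives $(c_t-\lambda)^+\le(c_t-\lambda_j)^+$ for each $t\in S^+$, so, using $1-x_t\ge 0$, the left side of \eqref{ineq:fc} is at most the left side of \eqref{ineq:submod1exp}. On the right, $\min\{c_t,\lambda_j\}\le\lambda_j\le\lambda$ together with $x_t\ge 0$ bounds the $L^-$-term of \eqref{ineq:submod1exp} above by the term $\lambda\,x(L^-)$ of \eqref{ineq:fc}, while $y\big(E^-\setminus(L^-\cup S^-)\big)=y\big(E^-\setminus L^-\big)-y(S^-)\le y\big(E^-\setminus L^-\big)$ by nonnegativity of flow; hence the right side of \eqref{ineq:submod1exp} is at most the right side of \eqref{ineq:fc}. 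Combining, every such $(x,y)$ satisfying \eqref{ineq:submod1exp} satisfies \eqref{ineq:fc}, which is the claim. The main obstacle is the bound $\lambda_j\le\lambda$, for which the cut argument above is the clean route; one should additionally dispatch the degenerate case $\lambda=0$, where \eqref{ineq:fc} is implied trivially.
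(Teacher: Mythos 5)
Your proposal is correct and follows essentially the same route as the paper: both reduce the claim to the key bound $\lambda_j\le\lambda$ and then compare the two inequalities term by term using $1-x_t\ge 0$, $x_t\ge 0$ and $y\ge 0$. The only cosmetic difference is in justifying that bound --- the paper obtains $m_j^u\le c(S^+)=\bar m^u$ and $m_j^d=d_{1n}+c(S^-)=\bar m^d$ from the recursions \eqref{eqn:alphau1}--\eqref{eqn:betad1} together with the path cover property, whereas you exhibit the all-sink-side cut for $m_j^u\le c(S^+)$ and use $m_j^d\ge v(C)$ directly from \eqref{mincutandm}; these are the same estimate.
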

\begin{proof}
	Flow cover and path cover inequalities differ in the coefficients of variables $x_t$ for $t\in S^+$ and $t\in L^-$. Therefore, we compare the values $\lambda_j$, $j\in N$ of path cover inequalities \eqref{ineq:submod1exp} to the value $\lambda$ of flow cover inequalities \eqref{ineq:fc} \textcolor{red}{and show that $\lambda_j \leq \lambda$, for all $j\in N$.} The merging of node set $N$ in graph $G$ is equivalent to relaxing the values $u_j$ and $b_j$ to be infinite for $j\in[1,n-1]$. As a result, the value of the minimum cut that goes above a the merged node is $\bar{m}^u = c(S^+)$ and the value of the minimum cut that goes below the merged node is $\bar{m}^d = d_{1n}+c(S^-)$. Now, observe that the recursions in \eqref{eqn:alphau1}-\eqref{eqn:betad1} imply that the minimum cut values for the original graph $G$ are smaller: $$m_j^u = \alpha_j^u+\beta_j^u - c(S_j^+) \leq c(S^+)=\bar{m}^u$$ and $$m_j^d=\alpha_j^d+\beta_j^d - d_j - c(S_j^-) \leq d_{1n}+c(S^-)=\bar{m}^d$$ for all $j\in N$. Recall that the coefficient for the flow cover inequality is $\lambda = (\bar{m}^u-\bar{m}^d)^+$ and the coefficients for path cover inequality are $\lambda_j = (m_j^u-m_j^d)^+$ for $j\in N$. The fact that $(S^+,S^-)$ is a path cover implies that $m_j^d = d_{1n}+c(S^-)$ for all $j\in N$. Since $\bar{m}^d = m_j^d$ and $m_j^u\leq\bar{m}^u$ for all $j\in N$, we observe that  $\lambda_j \leq \lambda$ for all $j\in N$. Consequently, the path cover inequality \eqref{ineq:submod1exp} is at least as strong as the flow cover inequality \eqref{ineq:fc}.
	\end{proof}

\begin{example}
\begin{figure} [h]
\centering
	\includegraphics{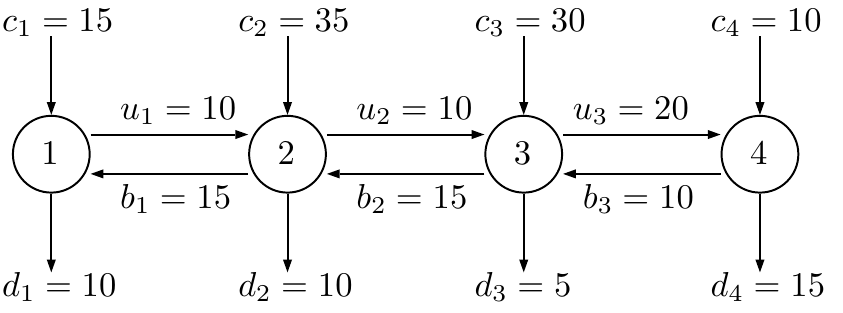}
	\caption{A lot-sizing instance with backlogging.} \label{fig:ex1}
\end{figure}
Consider the lot-sizing instance in Figure \ref{fig:ex1} where $N=[1,4]$, $S^+ = \{2, 3 \}$, $L^- = \emptyset$. Observe that $m_1^u = 45$, $m_1^d = 40$, $m_2^u = 65$, $m_2^d = 40$, $m_3^u = 60$, $m_3^d = 40$, and $m_4^u = 45$, $m_4^d = 40$. Then, $\lambda_1 = 5$, $\lambda_2 = 25$, $\lambda_3 = 20$, and $\lambda_4 = 5$ leading to coefficients $10$ and $10$ for $(1-x_2)$ and $(1-x_3)$, respectively. Furthermore, the maximum flow values are $v(C) = 40$, $v(C\setminus\{2\}) = 30$, and $v(C\setminus\{3\}) = 30$. Then, the resulting path cover inequality \eqref{ineq:submod1exp} is
\begin{align}
	y_2+y_3 + 10(1-x_2) + 10(1-x_3) \leq 40, \label{ex:pc}
\end{align}
and it is facet-defining for conv$(\mathcal{P})$ as will be shown in Section \ref{sec:facet}. Now, consider the relaxation obtained by merging the nodes in $[1,4]$ into a single node with incoming arcs $\{1,2,3,4\}$ and demand $d= 40$. As a result, the flow cover inequalities can be applied to the merged node set. The excess value for the set $S^+ = \{2,3\}$ is $\lambda = c(S^+)-d = 25$. Then, the resulting flow cover inequality \eqref{ineq:fc} is
\begin{align*}
	y_2+y_3 + 10(1-x_2) + 5(1-x_3) \leq 40,
\end{align*}
and it is weaker than the path cover inequality \eqref{ex:pc}.
\qed
\end{example}

\subsection*{Coefficients of inequality \eqref{ineq:submod2}: Path pack inequalities}
Let $S^+$ and $L^-$ be the objective sets in (F\ref{opt:vdefn}) and let $S^-\subseteq E^- \setminus L^-$. We select the coefficient sets in \eqref{coef1} as $K^+ = E^+$ and $K^- = E^-\setminus (S^-\cup L^-)$ to obtain the explicit form of inequality \eqref{ineq:submod2}. As a result, the set definition of $S^-=E^-\setminus (K^-\cup L^-)$ is conserved.

\begin{definition}
	Let the coefficients in \eqref{coef1} be selected as above and $(S^+,L^-)$ be the objective set. The set $(S^+,S^-)$ is called a \emph{path pack} for node set $N$ if $$v(S^+,L^-) = c(S^+).$$
\end{definition}
For inequality \eqref{ineq:submod2}, we assume that the set $(S^+,S^-)$ is a path pack for $N$ and \textcolor{red}{$L^-=\emptyset$ for simplicity}. Now, we need to compute the values of $v(C)$, $v(E)$, $v(E\setminus \{ t \})$ for $t\in C$ and $v(C\cup \{t \})$ for $t\in E\setminus C$. The value of $v(C\cup \{t \})$ can be obtained using the values $m_j^u$ and $m_j^d$ that are given by recursions \eqref{eqn:alphau1}-\eqref{eqn:betad1}. Then,
$$v(C\cup \{ t\}) = \min \{ m_j^u + c_t, m_j^d \}, \quad t\in E_j^+ \setminus S_j^+, \quad  j\in N$$ and
$$v(C\cup \{ t\}) = \min \{ m_j^u, m_j^d + c_t \}, \quad t\in S_j^-, \quad  j\in N.$$
Then, using Observation \ref{obs:difference}, we compute the marginal values $$\rho_t(C) = \min\{c_t,\mu_j\}, \quad t\in E_j^+\setminus S_j^+$$ and $$\rho_t(C) = (c_t-\mu_j)^+, \quad t\in S_j^-,$$ where $$\mu_j = (m_j^d-m_j^u)^+, \quad j\in N.$$

\ignore{
\begin{proposition} \label{prop:rhoS} If (F\ref{opt:vdefn}) is feasible with $L^- = E^-$, then
for the coefficient sets $K^+ = E^+$ and $K^-=E^-\setminus S^-$ for some $S^- \subseteq E^-$, $v(E\setminus \{t\}) = v(E)$ for all $t\in C$.	
\end{proposition}

%\begin{proof}
%Since $L^-=\emptyset$, it does not appear in the inequality and $C= S^+$. %Assumption (A.6) ensures feasibility of (F\ref{opt:vdefn}) when the objective set is $(E^+,E^-)$.
}

Next, we compute the values $v(E)$ and $v(E\setminus \{t\})$ for $t\in C$. The feasibility of (F\ref{opt:ls}) implies that $(E^+,\emptyset)$ is a path cover for $N$. %Now, suppose that an arc $t\in S^+$ is closed to compute $v(E\setminus \{t\})$ in (F\ref{opt:vdefn}).
By Assumption (A.1), $(E^+\setminus\{t\},\emptyset)$ is also a path cover for $N$ for each $t\in S^+$. Then $v(E) = v(E\setminus \{t\})=d_{1n}$
%\end{proof}
%Proposition \ref{prop:rhoS} implies that
and $$\rho_t(E\setminus\{t\})=0, \quad t\in S^+\cup L^-.$$ Then, inequality \eqref{ineq:submod2} can be explicitly written as
\begin{eqnarray}
	y(S^+) + \sum_{t\in E^+\setminus S^+} \left(y_t - \min \{c_t, \mu_j \} x_t \right) + \sum_{t\in S^-} (c_t-\mu_j)^ + (1-x_t) \leq c(S^+)  + y(E^-\setminus S^-). \label{ineq:submod2exp}
\end{eqnarray}
We refer to inequalities \eqref{ineq:submod2exp} as \emph{path pack inequalities}.

\begin{remark}
Observe that for a path consisting of a single node $N = \{j\}$ with demand $d:=d_j > 0$, the path pack inequalities \eqref{ineq:submod2exp}, reduce to the flow pack inequalities (\citep{A:fp}). Let $(S^+, S^-)$ be a flow pack and $\mu := d - c(S^+) + c(S^-)>0$. Moreover, the maximum flow that can be sent through $S^+$ for demand $d$ and arcs in $S^-$ is $c(S^+)$. Then, the value function $v(S^+) = c(S^+)$ and the resulting path pack inequality
\begin{eqnarray}
	y(S^+) + \sum_{t\in E^+\setminus S^+} \left(y_t - \min\{c_t, \mu\}x_t \right) + \sum_{t\in S^-} (c_t-\mu)^+(1-x_t) \leq c(S^+) + y(E^-\setminus S^-) \label{ineq:fp}
\end{eqnarray}
is equivalent to the flow pack inequality.
\end{remark}
\begin{proposition}
	Let $(S^+,S^-)$ be a path pack for the node set $N$. The path pack inequality for $N$ is at least as strong as the flow pack inequality for the single node relaxation obtained by merging the nodes in $N$.
\end{proposition}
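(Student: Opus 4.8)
\emph{Proof proposal.} The plan is to mirror the proof of Proposition~\ref{rem:fcpc}. The path pack inequality~\eqref{ineq:submod2exp} and the flow pack inequality~\eqref{ineq:fp} for the merged single node have the same right-hand side $c(S^+)+y(E^-\setminus S^-)$, and identical left-hand sides except that the coefficients $\min\{c_t,\mu_j\}$ of $x_t$ (for $t\in E^+\setminus S^+$) and $(c_t-\mu_j)^+$ of $1-x_t$ (for $t\in S^-$) in~\eqref{ineq:submod2exp} are replaced by $\min\{c_t,\mu\}$ and $(c_t-\mu)^+$ in~\eqref{ineq:fp}. Since $x_t\ge 0$ and $1-x_t\ge 0$ on $\mathcal{P}$, and since $-\min\{c_t,\mu_j\}x_t$ and $(c_t-\mu_j)^+(1-x_t)$ are both nonincreasing in $\mu_j$, it suffices to prove $\mu_j\le\mu$ for every $j\in N$: then the left-hand side of~\eqref{ineq:submod2exp} is at least that of~\eqref{ineq:fp} at every point of $\mathcal{P}$, so~\eqref{ineq:submod2exp} cuts off at least as much as~\eqref{ineq:fp}.

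First I would record the merged-node quantities. Merging the nodes of $N$ is equivalent to relaxing $u_j$ and $b_j$ to $+\infty$ for $j\in[1,n-1]$, so that no path arc lies in a finite cut; the minimum cut above the merged node then has value $\bar m^u=c(S^+)$ and the one below it has value $\bar m^d=d_{1n}+c(S^-)$, whence $\mu=(\bar m^d-\bar m^u)^+=(d_{1n}+c(S^-)-c(S^+))^+$, matching the $\mu$ of the remark preceding the statement.

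The technical core is the comparison with $m_j^u,m_j^d$. Setting $u_j,b_j\to+\infty$ only decreases the right-hand sides of the recursions~\eqref{eqn:alphau1}--\eqref{eqn:betad1}, so, exactly as in Proposition~\ref{rem:fcpc}, an easy induction together with~\eqref{mincutup}--\eqref{mincutdown} gives $m_j^u\le\bar m^u=c(S^+)$ and $m_j^d\le\bar m^d=d_{1n}+c(S^-)$ for all $j\in N$. Now the path pack hypothesis states $v(C)=c(S^+)$, while~\eqref{mincutandm} gives $v(C)=m_j=\min\{m_j^u,m_j^d\}$; combined with $m_j^u\le c(S^+)$ this forces $m_j^u=c(S^+)$ for every $j$. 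Therefore $\mu_j=(m_j^d-m_j^u)^+=(m_j^d-c(S^+))^+\le(d_{1n}+c(S^-)-c(S^+))^+=\mu$, which is precisely the bound required in the first step, and the proof is complete.

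I expect the difficulty to be bookkeeping rather than conceptual. Two points need care: that relaxing the path-arc capacities (equivalently, merging) can only increase every $s_N$--$t_N$ cut value, so that $m_j^u\le\bar m^u$ and $m_j^d\le\bar m^d$; and that the forcing step $m_j^u=c(S^+)$ uses the \emph{path pack} definition $v(C)=c(S^+)$, in contrast to the \emph{path cover} identity $m_j^d=d_{1n}+c(S^-)$ exploited in Proposition~\ref{rem:fcpc}. One should also confirm the signs in the first step: here the relevant monotone comparison is $\mu_j\le\mu$, the packing analogue of $\lambda_j\le\lambda$ for path cover.
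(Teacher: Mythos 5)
Your proposal is correct and follows essentially the same route as the paper: compare $\mu_j=(m_j^d-m_j^u)^+$ with $\mu=(\bar m^d-\bar m^u)^+$ for the merged node, establish $m_j^u\le\bar m^u=c(S^+)$ and $m_j^d\le\bar m^d=d_{1n}+c(S^-)$ from the recursions, and use the path pack condition $v(C)=c(S^+)$ to force $m_j^u=c(S^+)=\bar m^u$, giving $\mu_j\le\mu$. Your derivation of $m_j^u=c(S^+)$ from $v(C)=\min\{m_j^u,m_j^d\}$ is in fact slightly more explicit than the paper's one-line appeal to ``the minimum cut passes above all nodes,'' and the monotonicity check on the coefficients is the same implicit step the paper relies on.
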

\begin{proof}
	The proof is similar to that of Proposition \ref{rem:fcpc}. Flow pack and path pack inequalities only differ in the coefficients of variables $x_t$ for $t\in E^+\setminus S^+$ and $t\in S^-$. Therefore, we compare the values $\mu_j$, $j\in N$ of path pack inequalities \eqref{ineq:submod2exp} to the value $\mu$ of flow pack inequalities \eqref{ineq:fp} \textcolor{red}{and show that $\mu_j \leq \mu$ for all $j\in N$}. For the single node relaxation, the values of the minimum cuts that pass above and below the merged node are $\bar{m}^u = c(S^+)$ and $\bar{m}^d=d_{1n}+c(S^-)$, respectively. The recursions in \eqref{eqn:alphau1}-\eqref{eqn:betad1} imply that $$m_j^u=\alpha_j^u+\beta_j^u - c(S_j^+) \leq c(S^+)=\bar{m}^u$$ and $$m_j^d = \alpha_j^d+\beta_j^d -d_j - c(S_j^-) \leq d_{1n}+c(S^-)=\bar{m}^d.$$
	The coefficient for flow pack inequality is $\mu=(\bar{m}^d-\bar{m}^u)^+$ and for path pack inequality $\mu_j = (m_j^d-m_j^u)^+$. Since $(S^+,S^-)$ is a path pack, the minimum cut passes above all nodes in $N$ and $m_j^u = c(S^+)$ for all $j\in N$. As a result, $m_j^u=\bar{m}^u$ for all $j\in N$ and $m_j^d\leq \bar{m}^d$. Then, observe that the values $$\mu_j\leq \mu, \quad j\in N.$$
\end{proof}
\noindent\emph{Example} 1 (continued).
	Recall the lot-sizing instance with backlogging given in Figure \ref{fig:ex1}. Let the node set $N=[1,4]$ with $E^- = \emptyset$ and $S^+ = \{3\}$. Then, $m_1^u = 30$, $m_1^d = 40$, $m_2^u = 30$, $m_2^d = 40$, $m_3^u=30$, $m_3^d=30$, $m_4^u=30$, $m_4^d=30$, leading to $\mu_1 = 10$, $\mu_2 = 10$, $\mu_3 = 0$ and $\mu_4 = 0$. Moreover, the maximum flow values are $v(C) = 30$, $v(C\cup \{1\}) = 40$, $v(C\cup \{2\}) = 40$, $v(C\cup\{4\}) = 30$, $v(E)=40$, and $v(E\setminus \{3\}) = 40$. Then the resulting path pack inequality \eqref{ineq:submod2exp} is
	\begin{align}
		y_1+y_2+y_3+ y_4 \leq 30 + 10x_1 + 10x_2 \label{ex:pp}
	\end{align}
and it is facet-defining for conv$(\mathcal{P})$ as will be shown in Section \ref{sec:facet}. Now, suppose that the nodes in $[1,4]$ are merged into a single node with incoming arcs $\{1,2,3,4\}$ and demand $d=40$. For the same set $S^+$, we get $\mu = 40-30=10$. Then, the corresponding flow pack inequality \eqref{ineq:fp} is
\begin{align*}
	y_1+y_2+y_3+y_4 \leq 30 + 10 x_1 + 10 x_2 + 10 x_4,
\end{align*}
which is weaker than the path pack inequality \eqref{ex:pp}.
	\qed

\begin{proposition}
If $|E^+\setminus S^+| \leq 1$ and $S^- = \emptyset$, then inequalities \eqref{ineq:submod1exp} and \eqref{ineq:submod2exp} are equivalent.
\end{proposition}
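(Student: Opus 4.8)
The plan is to unpack the two explicit forms and show that, under the hypotheses, every coefficient $\lambda_j$ and every coefficient $\mu_j$ vanishes, so that each inequality collapses to a statement already implied by the flow-balance equalities \eqref{const:flowbalance} and the bound constraints of (F\ref{opt:ls}); the two are then equivalent.

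First I would fix the set-up. For both \eqref{ineq:submod1exp} and \eqref{ineq:submod2exp} to be in force, $(S^+,\emptyset)$ must be simultaneously a path cover and a path pack, so $v(C)=d_{1n}+c(S^-)=c(S^+)$, and since $S^-=\emptyset$ this gives $v(C)=d_{1n}=c(S^+)$; also $L^-=\emptyset$, as the path pack inequality is stated. The alternative $|E^+\setminus S^+|=0$, i.e. $S^+=E^+$, cannot occur: $(E^+,\emptyset)$ is always a path cover (feasibility of (F\ref{opt:ls})), so $v(E^+,\emptyset)=d_{1n}$; were it also a path pack we would get $c(E^+)=d_{1n}$, contradicting (A.1), since then closing any single arc $t\in E^+$ leaves total inflow at most $c(E^+\setminus\{t\})=d_{1n}-c_t<d_{1n}$, which by the telescoped flow balance $y(E^+)-y(E^-)=d_{1n}$ is infeasible. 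Hence $E^+\setminus S^+=\{t^*\}$ is a singleton, say $t^*\in E^+_{j^*}$.

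The key step is $\lambda_j=\mu_j=0$ for all $j\in N$. As in the proof of Proposition \ref{rem:fcpc}, a path cover forces $m_j^d=d_{1n}+c(S^-)=d_{1n}$ for every $j$: the cut placing all of $N$ on the source side of the parametric network has value $d_{1n}$, and by \eqref{mincutandm} it cannot undercut $v(C)$. Symmetrically, a path pack forces $m_j^u=c(S^+)=d_{1n}$ for every $j$, via the cut placing all of $N$ on the sink side. Since $\min\{m_j^u,m_j^d\}=v(C)=d_{1n}$, we conclude $m_j^u=m_j^d=d_{1n}$, whence $\lambda_j=(m_j^u-m_j^d)^+=0$ and $\mu_j=(m_j^d-m_j^u)^+=0$ for all $j$. (Alternatively, this can be seen directly from the recursions \eqref{eqn:alphau1}--\eqref{eqn:betad1}.)

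Finally I would substitute. With $\lambda_j=0$, $S^-=L^-=\emptyset$, $c(S^+)=d_{1n}$, inequality \eqref{ineq:submod1exp} becomes $y(S^+)+\sum_{t\in S^+}c_t(1-x_t)\le d_{1n}+y(E^-)$, i.e. $\sum_{t\in S^+}(y_t-c_tx_t)\le y(E^-)$, which is the sum of the bound inequalities $y_t\le c_tx_t$ over $t\in S^+$ and $y(E^-)\ge0$. With $\mu_j=0$ and $S^-=\emptyset$, inequality \eqref{ineq:submod2exp} becomes $y(S^+)+y_{t^*}\le c(S^+)+y(E^-)$, i.e. $y(E^+)\le d_{1n}+y(E^-)$, which is one direction of the telescoped flow balance. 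Using $y(E^+)-y(E^-)=d_{1n}$ on $\mathcal{P}$ one has $y_{t^*}=\sum_{t\in S^+}(c_t-y_t)+y(E^-)\ge\sum_{t\in S^+}c_t(1-x_t)$ at every feasible point, so each of \eqref{ineq:submod1exp} and \eqref{ineq:submod2exp} follows from the other modulo the constraints of (F\ref{opt:ls}); thus they are equivalent. The main obstacle is the middle step — justifying $m_j^u=m_j^d$ purely from the cover/pack definitions — together with cleanly excluding the degenerate case $S^+=E^+$ so that $t^*$ is well defined.
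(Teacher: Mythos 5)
Your proof rests on a misreading of what ``equivalent'' means in this proposition, and as a result it only covers a degenerate special case. You insist that both inequalities be written for the \emph{same} objective set $S^+$, and hence that $(S^+,\emptyset)$ be simultaneously a path cover and a path pack; this forces $c(S^+)=d_{1n}$, makes $\lambda_j=\mu_j=0$ for all $j$, and collapses both inequalities to consequences of the model constraints. The paper's proof does something different and nonvacuous: for $E^+\setminus S^+=\{t^*\}$ it takes the path pack inequality for $S^+$ assuming only that $(S^+,\emptyset)$ is a pack (so $v(C)=c(S^+)$, which is in general strictly less than $d_{1n}$), rewrites its right-hand side as $v(C)+\rho_{t^*}(C)x_{t^*}=v(C\cup\{t^*\})-\rho_{t^*}(C)(1-x_{t^*})$, and recognizes the result as the path cover inequality for the \emph{enlarged} objective set $E^+=S^+\cup\{t^*\}$ --- which is always a cover by feasibility, whose coefficients $\rho_t(E^+\setminus\{t\})$ vanish for $t\in S^+$ by (A.1), and whose coefficient on $t^*$ equals $\rho_{t^*}(C)$. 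In the generic case $c(S^+)<d_{1n}$ the pack inequality is a nontrivial cut, and your argument says nothing about it; in particular your key step $m_j^u=m_j^d=d_{1n}$ is unavailable there, since the pack condition only gives $m_j^u=c(S^+)$ while $m_j^d$ can be strictly larger, so $\mu_j$ need not vanish.

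A secondary symptom of the same misreading is your claim that the case $S^+=E^+$ ``cannot occur'': the exclusion is sound under your joint cover-and-pack hypothesis, but the paper treats $S^+=E^+$ as a legitimate (if trivial) case in which the two explicit forms have identical coefficients. The computations you do carry out (the telescoped flow balance, the bound $y_{t^*}\ge\sum_{t\in S^+}c_t(1-x_t)$, the vanishing of $\lambda_j$ and $\mu_j$ when cover and pack hold together) are correct, but they establish the equivalence only on a knife-edge configuration rather than proving the proposition as intended, namely that under the stated hypotheses the family of path pack inequalities is subsumed by the family of path cover inequalities with a shifted objective set.
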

\begin{proof}
If $E^+\setminus S^+ = \emptyset$ and $S^- = \emptyset$, then it is easy to see that the coefficients of inequality \eqref{ineq:submod2exp} are the same as \eqref{ineq:submod1exp}. Moreover, if $|E^+\setminus S^+| = 1$ (and wlog $E^+\setminus S^+ = \{j\}$), then the resulting inequality \eqref{ineq:submod2exp} is
	\begin{align*}
		y(E^+) - y(E^-) & \leq v(C) + \rho_j(C)x_j \\
		& = v(C) + \big( v(C \cup \{j\}) - v(C)\big) x_j \\
		& = v(C \cup \{j\}) - \rho_j(C) (1-x_j),
	\end{align*}
	which is equivalent to path cover inequality \eqref{ineq:submod1exp} with the objective set $(E^+,\emptyset)$.
\end{proof}

\section{The strength of the path cover and pack inequalities} \label{sec:facet}
The capacities of the forward and the backward path arcs play an important role in finding the coefficients of the path cover and pack inequalities \eqref{ineq:submod1exp} and \eqref{ineq:submod2exp}. Recall that $K^+$ and $K^-$ are the coefficient sets in \eqref{coef1}, $(S^+, L^-)$ is the objective set for (F\ref{opt:vdefn}) and $S^- = E^-\setminus (K^-\cup L^-)$. 

\begin{definition}
	A node $j\in N$ is called \emph{backward independent} for set $(S^+,S^-)$ if $$\alpha_j^u = \alpha_{j-1}^d +u_{j-1} + c(S_{j}^+),$$ or $$\alpha_j^d = \alpha_{j-1}^u+b_{j-1} + d_j + c(S_j^-).$$
\end{definition}
\begin{definition}
	A node $j\in N$ is called \emph{forward independent} for set $(S^+,S^-)$ if $$\beta_j^u = \beta_{j+1}^d +b_{j} + c(S_{j}^+),$$ or $$\beta_j^d = \beta_{j+1}^u+ u_{j} + d_j + c(S_j^-).$$
\end{definition}
\textcolor{red}{Intuitively, backward independence of node $j\in N$ implies that the minimum cut either passes through the forward path arc $(j-1,j)$ or through the backward path arc $(j,j-1)$. Similarly, forward independence of node $j\in N$ implies that the minimum cut either passes through the forward path arc $(j,j+1)$ or through the backward path arc $(j+1,j)$.} In Lemmas \ref{obs:revindep} and \ref{obs:fwdindep} below, we further explain how forward and backward independence affect the coefficients of path cover and pack inequalities. First, let $S_{jk}^+ = \cup_{i=j}^k S_{i}^+$, $S_{jk}^- = \cup_{i=j}^k S_{i}^-$ and $L_{jk}^- = \cup_{i=j}^k L_{i}^-$ if $j\leq k$, and $\emptyset$ otherwise.

\begin{lemma} \label{obs:revindep}
If a node $j\in N$ is backward independent for set $(S^+,S^-)$, then the values $\lambda_j$ and $\mu_j$ do not depend on the sets $S_{1j-1}^+$, $S_{1j-1}^-$ and the value $d_{1j-1}$.
\end{lemma}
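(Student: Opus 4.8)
The plan is to trace through the recursions \eqref{eqn:alphau1}--\eqref{eqn:betad1} and show that backward independence of node $j$ ``cuts the dependence chain'' at node $j$, so that the values $m_j^u$ and $m_j^d$ (and hence $\lambda_j=(m_j^u-m_j^d)^+$ and $\mu_j=(m_j^d-m_j^u)^+$) no longer see the data strictly to the left of $j$. First I would recall that $\lambda_j$ and $\mu_j$ are determined by $m_j^u,m_j^d$, which via \eqref{mincutup}--\eqref{mincutdown} are determined by $\alpha_j^u,\alpha_j^d$ (the forward recursion over $[1,j]$) together with $\beta_j^u,\beta_j^d$ (the backward recursion over $[j,n]$). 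The backward quantities $\beta_j^u,\beta_j^d$ depend only on data indexed $\geq j$, so all the dependence on $S_{1,j-1}^+$, $S_{1,j-1}^-$ and $d_{1,j-1}$ must come through $\alpha_j^u$ and $\alpha_j^d$. Thus it suffices to show that under backward independence, both $\alpha_j^u$ and $\alpha_j^d$ are unchanged if we perturb the left data, or — more precisely — that the pair $(\alpha_j^u,\alpha_j^d)$ entering \eqref{mincutup}--\eqref{mincutdown} can be replaced by values that do not involve $S_{1,j-1}^+,S_{1,j-1}^-,d_{1,j-1}$.

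The key step is a case analysis on which of the two defining equalities of backward independence holds. Suppose $\alpha_j^u=\alpha_{j-1}^d+u_{j-1}+c(S_j^+)$. Combined with \eqref{eqn:alphad1} this gives $\alpha_j^u = \big(\alpha_{j-1}^d\big)+u_{j-1}+c(S_j^+)$ where $\alpha_{j-1}^d = \min\{\alpha_{j-2}^d,\alpha_{j-2}^u+b_{j-2}\}+d_{j-1}+c(S_{j-1}^-)$, so that the minimum cut realizing $\alpha_j^u$ passes \emph{below} node $j-1$ and through the forward arc $(j-1,j)$; I would then argue that because the cut must pass below $j-1$ anyway, the contributions $d_{1,j-1}+c(S_{1,j-1}^-)$ that accumulate on that part of the cut are a \emph{fixed additive constant} independent of the left data's fine structure — but that is not quite enough, since $d_{1,j-1}$ itself varies. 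The right way is instead to observe that $m_j^u = \alpha_j^u+\beta_j^u-c(S_j^+)$ and $m_j^d=\alpha_j^d+\beta_j^d-d_j-c(S_j^-)$, and to show that the \emph{difference} $m_j^u-m_j^d$ is independent of the left data. In the case $\alpha_j^u=\alpha_{j-1}^d+u_{j-1}+c(S_j^+)$ we also have, by optimality of $\alpha_j^d$ in \eqref{eqn:alphad1}, that $\alpha_j^d\leq\alpha_{j-1}^d+d_j+c(S_j^-)$; I would show this forces $\alpha_j^d=\alpha_{j-1}^d+d_j+c(S_j^-)$ (using that the alternative branch $\alpha_{j-1}^u+b_{j-1}\geq\alpha_{j-1}^d$ would follow from, or can be handled alongside, the independence hypothesis), whence $m_j^u-m_j^d = \beta_j^u+u_{j-1} - \beta_j^d$, which involves only $u_{j-1}$ and backward data — no $S_{1,j-1}^+,S_{1,j-1}^-$ or $d_{1,j-1}$. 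The symmetric sub-case $\alpha_j^d=\alpha_{j-1}^u+b_{j-1}+d_j+c(S_j^-)$ is handled the same way, yielding $m_j^u-m_j^d=\beta_j^u-b_{j-1}-\beta_j^d$.

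The main obstacle I anticipate is the bookkeeping in the case analysis: I must be careful that when one of the two backward-independence equalities holds, the \emph{other} of $\alpha_j^u,\alpha_j^d$ also simplifies in the compatible way (i.e.\ that the two minima in \eqref{eqn:alphau1}--\eqref{eqn:alphad1} are attained on ``matching'' branches), so that the cancellation of $\alpha_{j-1}^d$ (or $\alpha_{j-1}^u$) in the expression $m_j^u-m_j^d$ actually goes through rather than leaving a residual $\min$ term that still references the left data. I would close this gap using Observation \ref{obs:difference} together with the elementary fact that if $\min\{a,b\}=b$ achieves the minimum defining one $\alpha$-value, the complementary inequality $a\geq b$ is exactly what is needed to collapse the other $\min$. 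Once the difference $m_j^u-m_j^d$ is written purely in terms of $u_{j-1}$ or $b_{j-1}$ and the backward recursion values $\beta_j^u,\beta_j^d$, taking positive parts gives the claim for both $\lambda_j$ and $\mu_j$ simultaneously.
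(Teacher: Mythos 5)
Your proposal is correct and follows essentially the same route as the paper: a case analysis on which backward-independence equality holds, the observation that the attained branch of one recursion forces the matching branch of the other (e.g.\ $\alpha_j^u=\alpha_{j-1}^d+u_{j-1}+c(S_j^+)$ gives $\alpha_{j-1}^d+u_{j-1}\leq\alpha_{j-1}^u$ and hence $\alpha_j^d=\alpha_{j-1}^d+d_j+c(S_j^-)$), and the cancellation of $\alpha_{j-1}^d$ (resp.\ $\alpha_{j-1}^u$) in $m_j^u-m_j^d$, yielding $\beta_j^u-\beta_j^d+u_{j-1}$ and $\beta_j^u-\beta_j^d-b_{j-1}$ exactly as in the paper. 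The ``obstacle'' you flag is resolved precisely the way the paper resolves it, so there is no gap.
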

\begin{proof}
	If a node $j$ is backward independent, then either $\alpha_j^u = \alpha_{j-1}^d +u_{j-1} + c(S_{j}^+)$ or $\alpha_j^d = \alpha_{j-1}^u+b_{j-1} + d_j + c(S_j^-)$. If $\alpha_j^u = \alpha_{j-1}^d +u_{j-1} + c(S_{j}^+)$, then the equality in \eqref{eqn:alphau1} implies $\alpha_{j-1}^d+u_{j-1}\leq \alpha_{j-1}^u$. As a result, the equality in \eqref{eqn:alphad1} gives $\alpha_j^d = \alpha_{j-1}^d + d_j + c(S_j^-)$. Following the definitions in \eqref{mincutup}--\eqref{mincutdown}, the difference $$w_j:=m_j^u - m_j^d$$ is $\beta_j^u-\beta_j^d +u_{j-1}$ which only depends on sets $S_k^+$ and $S_k^-$ for $k\in[j,n]$, the value $d_{jn}$ and the capacity of the forward path arc $(j-1,j)$.
	
	If $\alpha_j^d = \alpha_{j-1}^u+b_{j-1} + d_j + c(S_j^-)$, then the equality in \eqref{eqn:alphad1} implies $\alpha_{j-1}^u+b_{j-1}\leq \alpha_{j-1}^d$. As a result, the equality in \eqref{eqn:alphau1} gives $\alpha_j^u = \alpha_{j-1}^u + c(S_j^+)$. Then, the difference $w_j = \beta_j^u-\beta_j^d -b_{j-1}$ which only depends on sets $S_k^+$ and $S_k^-$ for $k\in[j,n]$, the value $d_{jn}$ and the capacity of the backward path arc $(j,j-1)$.
	
	Since the values $\lambda_j$ and $\mu_j$ are defined as $(w_j)^+$ and $(-w_j)^+$ respectively, the result follows.
\end{proof}

\begin{remark} \label{revlambdas}
Let $w_j := m_j^u-m_j^d$. If a node $j\in N$ is backward independent for a set $(S^+,S^-)$, then we observe the following: (1) If $\alpha_j^u = \alpha_{j-1}^d +u_{j-1} + c(S_{j}^+)$, then $$w_j = \beta_j^u-\beta_j^d+u_{j-1},$$ and (2) if $\alpha_j^d = \alpha_{j-1}^u+b_{j-1} + d_j + c(S_j^-)$, then $$w_j = \beta_j^u - \beta_j^d - b_{j-1}.$$
\end{remark}

\begin{lemma} \label{obs:fwdindep}
If a node $j\in N$ is forward independent for set $(S^+,S^-)$, then the values $\lambda_j$ and $\mu_j$ do not depend on the sets $S_{j+1n}^+$, $S_{j+1n}^-$ and the value $d_{j+1n}$.
\end{lemma}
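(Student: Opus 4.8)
The plan is to mirror the proof of Lemma \ref{obs:revindep} with the roles of the forward and backward recursions swapped. The key observation is that $\lambda_j$ and $\mu_j$ are defined through $w_j := m_j^u - m_j^d$, and by \eqref{mincutup}--\eqref{mincutdown} this difference is
\[
w_j = (\alpha_j^u + \beta_j^u - c(S_j^+)) - (\alpha_j^d + \beta_j^d - d_j - c(S_j^-)) = (\alpha_j^u - \alpha_j^d) + (\beta_j^u - \beta_j^d) + d_j + c(S_j^-) - c(S_j^+),
\]
so it suffices to show that, under forward independence, the quantity $\beta_j^u - \beta_j^d$ (the only part of $w_j$ that can carry information from nodes $j+1,\dots,n$) collapses to an expression involving only $\beta_{j+1}$-type values through a single path arc capacity, thereby decoupling from $S_{j+1,n}^+$, $S_{j+1,n}^-$ and $d_{j+1,n}$.

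First I would split into the two cases of the definition of forward independence. Case 1: $\beta_j^u = \beta_{j+1}^d + b_j + c(S_j^+)$. Then the $\min$ in \eqref{eqn:betau1} is attained by its second argument, i.e. $\beta_{j+1}^d + b_j \le \beta_{j+1}^u$; plugging this inequality into the $\min$ in \eqref{eqn:betad1} forces $\beta_j^d = \beta_{j+1}^d + d_j + c(S_j^-)$ (since $\beta_{j+1}^d \le \beta_{j+1}^u + u_j$ as well, using $u_j\ge 0$). Subtracting, $\beta_j^u - \beta_j^d = b_j + c(S_j^+) - d_j - c(S_j^-)$, and hence $w_j = (\alpha_j^u - \alpha_j^d) + b_j$, which by the forward recursions depends only on nodes $1,\dots,j$, i.e. on $S_{1,j-1}^+$, $S_{1,j-1}^-$, $d_{1,j-1}$, the sets $S_j^\pm$, $d_j$, and the backward path arc capacity $b_j$ — in particular not on $S_{j+1,n}^+$, $S_{j+1,n}^-$ or $d_{j+1,n}$. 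Case 2: $\beta_j^d = \beta_{j+1}^u + u_j + d_j + c(S_j^-)$. Symmetrically, the $\min$ in \eqref{eqn:betad1} is attained by its first argument, so $\beta_{j+1}^u + u_j \le \beta_{j+1}^d$; feeding this into \eqref{eqn:betau1} gives $\beta_j^u = \beta_{j+1}^u + c(S_j^+)$, whence $\beta_j^u - \beta_j^d = c(S_j^+) - u_j - d_j - c(S_j^-)$ and $w_j = (\alpha_j^u - \alpha_j^d) - u_j$, again independent of the data strictly to the right of node $j$.

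Having pinned down $w_j$ in both cases, I would conclude exactly as in Lemma \ref{obs:revindep}: since $\lambda_j = (w_j)^+$ and $\mu_j = (-w_j)^+$, and $w_j$ has been shown to be a function only of $\alpha_j^u - \alpha_j^d$ together with a single path arc capacity ($b_j$ or $u_j$) and the local data $S_j^\pm, d_j$, the values $\lambda_j$ and $\mu_j$ do not depend on $S_{j+1,n}^+$, $S_{j+1,n}^-$ or $d_{j+1,n}$. I expect the only mildly delicate point to be the bookkeeping that checks the secondary min in each case is also resolved in the claimed direction — this is where $u_j\ge 0$ and $b_j\ge 0$ (Assumption (A.2)) get used — but this is routine, exactly parallel to the backward-independent argument, and carries no real obstacle; one could alternatively state it as a direct corollary of Lemma \ref{obs:revindep} by the left-right reflection symmetry of the path and of recursions \eqref{eqn:alphau1}--\eqref{eqn:betad1}.
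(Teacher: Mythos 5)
Your proof is correct and follows essentially the same route as the paper's: resolve the two cases of forward independence, show that the companion recursion \eqref{eqn:betad1} (resp.\ \eqref{eqn:betau1}) is then forced to take the $\beta_{j+1}^d$ (resp.\ $\beta_{j+1}^u$) branch, and conclude that $w_j = \alpha_j^u-\alpha_j^d+b_j$ or $w_j=\alpha_j^u-\alpha_j^d-u_j$, which depends only on data from nodes $1,\dots,j$. In fact your write-up is slightly more careful than the paper's, which asserts the secondary-min resolution without justification and contains sign/index typos ($\alpha_j^u+\alpha_j^d$ for $\alpha_j^u-\alpha_j^d$, and $[1,j-1]$, $d_{1j-1}$ where $[j+1,n]$, $d_{j+1n}$ are meant).
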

\begin{proof}
	The forward independence implies either $\beta_j^u = \beta_{j+1}^d + b_j +c(S_j^+)$ and $\beta_j^d = \beta_{j+1}^d + d_j+c(S_j^-)$ or $\beta_j^u = \beta_{j+1}^u + c(S_j^+)$ and $\beta_j^d=\beta_{j+1}^u +u_j + d_j+c(S_j^-)$. Then, the difference $w_j = m_j^u-m_j^d$ is either $\alpha_j^u+\alpha_j^d+b_j$ or $\alpha_j^u+\alpha_j^d-u_j$ and in both cases, it is independent of the sets $S_k^+$, $S_k^-$ for $k\in [1,j-1]$ and the value $d_{1j-1}$.
\end{proof}

\begin{remark} \label{fwdlambdas}
Let $w_j := m_j^u-m_j^d$. If a node $j\in N$ is forward independent for a set $(S^+,S^-)$, then we observe the following: (1) If $\beta_j^u = \beta_{j+1}^d + b_j +c(S_j^+)$, then $$w_j = \alpha_j^u-\alpha_j^d+b_j,$$ and (2) if $\beta_j^d=\beta_{j+1}^u +u_j + d_j+c(S_j^-)$, then $$w_j = \alpha_j^u-\alpha_j^d-u_j.$$
\end{remark}

\begin{corollary} \label{cor:independence}
If a node $j\in N$ is backward independent for set $(S^+,S^-)$, then the values $\lambda_k$ and $\mu_k$ for $k\in[j,n]$ are also independent of the sets $S_{1j-1}^+$, $S_{1j-1}^-$ and the value $d_{1j-1}$. Similarly, if a node $j\in N$ is forward independent for set $(S^+,S^-)$, then the values $\lambda_k$ and $\mu_k$ for $k\in[1,j]$ are also independent of the sets $S_{j+1n}^+$, $S_{j+1n}^-$ and the value $d_{j+1n}$.
\end{corollary}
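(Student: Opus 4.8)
The plan is to reduce everything to the single quantity $w_k := m_k^u - m_k^d$: since $\lambda_k = (w_k)^+$ and $\mu_k = (-w_k)^+$, it suffices to show that $w_k$ is independent of the indicated data, for the appropriate range of $k$. I would prove the backward-independence statement in detail and then obtain the forward one by the mirror-image argument. So first I would revisit the proof of Lemma \ref{obs:revindep}. There it is shown that backward independence of $j$ forces one of two situations: either $\alpha_j^u = \alpha_{j-1}^d + u_{j-1} + c(S_j^+)$ together with $\alpha_j^d = \alpha_{j-1}^d + d_j + c(S_j^-)$, or $\alpha_j^u = \alpha_{j-1}^u + c(S_j^+)$ together with $\alpha_j^d = \alpha_{j-1}^u + b_{j-1} + d_j + c(S_j^-)$. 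In either case we may write $\alpha_j^u = \gamma + p^u$ and $\alpha_j^d = \gamma + p^d$, where the common offset $\gamma$ equals $\alpha_{j-1}^d$ (resp.\ $\alpha_{j-1}^u$) and is the \emph{only} term carrying a dependence on $S_{1j-1}^+$, $S_{1j-1}^-$ and $d_{1j-1}$, while $p^u,p^d$ depend only on $u_{j-1}$ or $b_{j-1}$, and on $d_j,c(S_j^+),c(S_j^-)$.

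The key step is the observation that the forward recursions \eqref{eqn:alphau1}--\eqref{eqn:alphad1} commute with the addition of a common constant: if $\alpha_k^u$ and $\alpha_k^d$ are both replaced by $\alpha_k^u + C$ and $\alpha_k^d + C$, then $\alpha_{k+1}^u$ and $\alpha_{k+1}^d$ are both replaced by $\alpha_{k+1}^u + C$ and $\alpha_{k+1}^d + C$, because each is a minimum of expressions affine in $(\alpha_k^u,\alpha_k^d)$ plus a node-local constant. I would state this as an explicit sub-claim. Inducting from $k=j$ forward, it follows that for every $k\in[j,n]$ one can write $\alpha_k^u = \gamma + q_k^u$ and $\alpha_k^d = \gamma + q_k^d$, where $q_k^u,q_k^d$ are the values obtained by running \eqref{eqn:alphau1}--\eqref{eqn:alphad1} from the local initial data $p^u,p^d$, hence depend only on $p^u,p^d$ and the data at nodes in $[j+1,k]$. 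Therefore $\alpha_k^u - \alpha_k^d = q_k^u - q_k^d$ is independent of $S_{1j-1}^+$, $S_{1j-1}^-$ and $d_{1j-1}$. Substituting \eqref{mincutup}--\eqref{mincutdown} gives $w_k = (\alpha_k^u - \alpha_k^d) + (\beta_k^u - \beta_k^d) - c(S_k^+) + d_k + c(S_k^-)$, and since $\beta_k^u,\beta_k^d$ (via the backward recursions) and $c(S_k^+),d_k,c(S_k^-)$ involve only nodes in $[k,n]\subseteq[j,n]$, we conclude that $w_k$, and hence $\lambda_k$ and $\mu_k$, are independent of the stated data for all $k\in[j,n]$.

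For the forward-independence statement the argument is completely symmetric: Lemma \ref{obs:fwdindep} shows $\beta_j^u = \delta + \tilde p^u$ and $\beta_j^d = \delta + \tilde p^d$ with the common offset $\delta\in\{\beta_{j+1}^d,\beta_{j+1}^u\}$ absorbing all dependence on $S_{j+1n}^+$, $S_{j+1n}^-$, $d_{j+1n}$, while $\tilde p^u,\tilde p^d$ are node-local; the backward recursions \eqref{eqn:betau1}--\eqref{eqn:betad1} likewise commute with common shifts, so $\beta_k^u - \beta_k^d$ is offset-free for every $k\in[1,j]$, and $\alpha_k^u,\alpha_k^d,c(S_k^+),d_k,c(S_k^-)$ for $k\le j$ involve only nodes in $[1,j]$; hence $w_k$ is independent for $k\in[1,j]$. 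The only genuinely delicate point is the bookkeeping of exactly which terms the offset hides and which are node-local, so I would isolate the common-shift invariance of the recursions as a lemma-internal claim and then let the two cases of Corollary \ref{cor:independence} drop out of Lemmas \ref{obs:revindep} and \ref{obs:fwdindep} respectively.
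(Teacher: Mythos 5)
Your proof is correct and follows essentially the same route as the paper: both arguments use the recursions \eqref{eqn:alphau1}--\eqref{eqn:betad1} to propagate the consequence of backward (resp.\ forward) independence forward (resp.\ backward) along the path and conclude that the difference $w_k=m_k^u-m_k^d$ is unaffected by the excluded data. Your explicit sub-claim that the recursions commute with a common additive shift of $(\alpha_k^u,\alpha_k^d)$ is simply a cleaner formalization of the induction the paper compresses into ``we can repeat the same argument.''
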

\begin{proof}
	The proof follows from recursions in \eqref{eqn:alphau1}--\eqref{eqn:betad1}. If a node $j$ is backward independent, we write $\alpha_{j+1}^u$ and $\alpha_{j+1}^d$ in terms of $\alpha_{j-1}^u$ and $\alpha_{j-1}^d$ and observe that the difference $w_{j+1}= m_{j+1}^u-m_{j+1}^d$ does not depend on $\alpha_{j-1}^u$ nor $\alpha_{j-1}^d$ which implies independence of sets $S_{1j-1}^+$, $S_{1j-1}^-$ and the value $d_{1n}$. We can repeat the same argument for $w_j$, $j\in [j+2,n]$ to show independence.
	
	We show the same result for forward independence by writing $\beta_{j-1}^u$ and $\beta_{j-1}^d$ in terms of $\beta_{j+1}^u$ and $\beta_{j+1}^d$, we observe that $w_j$ does not depend on $\beta_{j+1}^u$ nor $\beta_{j+1}^d$. Then, it is clear that $w_{j-1}$ is also independent of the sets $S_{j+1n}^+$, $S_{j+1n}^-$ and the value $d_{j+1n}$. We can repeat the same argument for $w_j$, $j\in [1,j-1]$ to show independence.
\end{proof}

Proving the necessary facet conditions frequently requires a partition of the node set $N$ into two disjoint sets. Suppose, $N$ is partitioned into $N_1 = [1,j-1]$ and $N_2 = [j,n]$ for some $j\in N$. Let $E_{N1}$ and $E_{N2}$ be the set of non-path arcs associated with node sets $N_1$ and $N_2$. We consider the forward and backward path arcs $(j-1,j)$ and $(j,j-1)$ to be in the set of non-path arcs $E_{N1}$ and $E_{N2}$ since the node $j-1 \in N_i$ and $j \notin N_i$ for $i=1,2$. In particular, $E_{N1}^+:=(j,j-1)\cup E_{1j-1}^+$, $E_{N1}^- := (j-1,j)\cup E_{1j-1}^-$ and $E_{N2}^+:=(j-1,j)\cup E_{jn}^+$, $E_{N2}^- := (j,j-1)\cup E_{jn}^-$, where $E_{k\ell}^+$ and $E_{k\ell}^-$ are defined as $\cup_{i=k}^{\ell}E_i^+$ and $\cup_{i=k}^{\ell}E_i^-$ if $k\leq \ell$ respectively, and as the empty set otherwise. Since the path arcs for $N$ do not have associated fixed-charge variables, one can assume that there exists auxiliary binary variables $\tilde{x}_k=1$ for $k\in\{(j-1,j),(j,j-1)\}$. Moreover, we partition the sets $S^+, S^-$ and $L^-$ into $S_{N1}^+ \supseteq S_{1j-1}^+$, $ S_{N1}^-\supseteq S_{1j-1}^-$, $L_{N1}^-:=L_{1j-1}^-$ and $S_{N2}^+\supseteq S_{jn}^+$, $S_{N2}^- \supseteq S_{jn}^-$, $L_{N2}^-:=L_{jn}^-$. Then, let $v_{1}$ and $v_{2}$ be the value functions defined in (F\ref{opt:vdefn}) for the node sets $N_1$ and $N_2$ and the objective sets $(S_{N1}^+,L_{N1}^-)$ and $(S_{N2}^+,L_{N2}^-)$. Moreover, let $\alpha_j^u$, $\alpha_j^d$, $\beta_j^u$ and $\beta_j^d$ be defined for $j\in N$ in recursions \eqref{eqn:alphau1}-\eqref{eqn:betad1} for the set $(S^+,S^-)$ and recall that $S^-=E^-\setminus (K^-\cup L^-)$.
\begin{lemma} \label{prop:separability1}
Let $(S^+, L^-)$ be the objective set for the node set for $N=[1,n]$. If $\alpha_j^u = \alpha_{j-1}^d+u_{j-1}+c(S_j^+)$ or $\beta_{j-1}^u = \beta_{j}^d + b_{j-1} +c(S_{j-1}^+)$, then $$v(S^+,L^-) = v_{1}(S_{N1}^+,L_{N1}^-) + v_{2}(S_{N2}^+,L_{N2}^-),$$ where $N_1=[1,j-1]$, $N_2 = [j,n]$ and the arc sets are $S_{N1}^+ = (j,j-1)\cup S_{1j-1}^+$, $S_{N2}^+ = (j-1,j)\cup S_{jn}^+$, $S_{N1}^- = S_{1j-1}^-$, $S_{N2}^- = S_{jn}^-$.
\end{lemma}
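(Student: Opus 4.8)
The plan is to express both $v_1$ and $v_2$ through the very same forward and backward recursions \eqref{eqn:alphau1}--\eqref{eqn:betad1} that were set up for the whole path $N$, and then to finish with a short two-case substitution driven by the hypothesis.

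First I would identify $v_1$ and $v_2$ with boundary values of the full-path recursions. By Propositions \ref{prop:zeros} and \ref{prop:maxflequiv} together with \eqref{mincutandm}, $v_1$ equals the value of a minimum cut in the subnetwork on $N_1=[1,j-1]$. Detaching the path at the arc pair $\{(j-1,j),(j,j-1)\}$ turns $(j,j-1)$ (capacity $b_{j-1}$, placed in $S_{N1}^+$) into an ordinary source arc into node $j-1$, while it turns $(j-1,j)$ (capacity $u_{j-1}$, placed in $E_{N1}^-\setminus(S_{N1}^-\cup L_{N1}^-)$, hence in $K_{N1}^-$) into a sink arc carrying zero flow by Proposition \ref{prop:zeros}, so that arc contributes nothing to any cut. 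Hence the forward recursion for $N_1$ agrees with \eqref{eqn:alphau1}--\eqref{eqn:alphad1} on $[1,j-2]$ and, at the terminal node $j-1$, differs only by the extra term $b_{j-1}$ in the ``above'' value; since $j-1$ is the last node of $N_1$ this gives
\[ v_1 = \min\{\alpha_{j-1}^u + b_{j-1},\ \alpha_{j-1}^d\}. \]
The symmetric argument on $N_2=[j,n]$, with $(j-1,j)$ now an extra source arc of capacity $u_{j-1}$ into the initial node $j$, gives $v_2 = \min\{\beta_j^u + u_{j-1},\ \beta_j^d\}$.

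Next I would split on the hypothesis. If $\alpha_j^u = \alpha_{j-1}^d + u_{j-1} + c(S_j^+)$, then the minimum in \eqref{eqn:alphau1} is attained at its first term, so $\alpha_{j-1}^d + u_{j-1} \le \alpha_{j-1}^u$; hence $\alpha_{j-1}^d \le \alpha_{j-1}^u \le \alpha_{j-1}^u + b_{j-1}$, which at once forces $v_1 = \alpha_{j-1}^d$ and makes the minimum in \eqref{eqn:alphad1} attained at $\alpha_{j-1}^d$, i.e.\ $\alpha_j^d = \alpha_{j-1}^d + d_j + c(S_j^-)$. Substituting these two equalities into \eqref{mincutup}--\eqref{mincutdown} and using $v(S^+,L^-) = m_j = \min\{m_j^u, m_j^d\}$ from \eqref{mincutandm} yields
\[ v(S^+,L^-) = \alpha_{j-1}^d + \min\{u_{j-1} + \beta_j^u,\ \beta_j^d\} = v_1 + v_2. \]
The case $\beta_{j-1}^u = \beta_j^d + b_{j-1} + c(S_{j-1}^+)$ is entirely symmetric: the minimum in \eqref{eqn:betau1} at node $j-1$ is attained at its second term, so $\beta_j^d + b_{j-1} \le \beta_j^u$, which forces $v_2 = \beta_j^d$ and $\beta_{j-1}^d = \beta_j^d + d_{j-1} + c(S_{j-1}^-)$; substituting into \eqref{mincutup}--\eqref{mincutdown} and using $v(S^+,L^-) = m_{j-1}$ then gives $v(S^+,L^-) = \beta_j^d + \min\{\alpha_{j-1}^u + b_{j-1},\ \alpha_{j-1}^d\} = v_1 + v_2$.

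The substitution carried out in the preceding step is mechanical; the part that needs care is the first step — checking that detaching the path at $\{(j-1,j),(j,j-1)\}$ really produces the non-path arcs, capacities, and coefficient placements named in the statement, and in particular that the boundary arc with coefficient $-1$ may be ignored in the cut values. I would carry this out through the minimum-cut picture of Section~\ref{sec:spi}, where phrasing everything in terms of cuts ``passing above'' or ``below'' the boundary node $j-1$ makes the two cases of the hypothesis transparent.
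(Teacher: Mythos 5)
Your proposal is correct and follows essentially the same route as the paper's proof in Appendix A: both establish $v_1=\min\{\alpha_{j-1}^u+b_{j-1},\alpha_{j-1}^d\}$ and $v_2=\min\{\beta_j^u+u_{j-1},\beta_j^d\}$ by viewing the boundary arcs as non-path arcs of the two subpaths, and then conclude by the same two-case substitution into $v(C)=m_j$ (resp.\ $m_{j-1}$). Your added observations — that the arc with coefficient $-1$ carries zero flow by Proposition \ref{prop:zeros} and that $\alpha_{j-1}^d\le\alpha_{j-1}^u+b_{j-1}$ is needed to pin down $v_1=\alpha_{j-1}^d$ — only make explicit details the paper leaves implicit.
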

\begin{proof}
See Appendix \ref{app:proof_sep1}.
\end{proof}

\begin{lemma} \label{prop:separability2}
Let $(S^+, L^-)$ be the objective set for the node set for $N=[1,n]$. If $\alpha_j^d = \alpha_{j-1}^u+b_{j-1}+d_{j-1}+c(S_j^-)$ or $\beta_{j-1}^d = \beta_{j}^u + u_{j-1}+ d_{j-1}+c(S_{j-1}^-)$, then $$v(S^+,L^-) = v_{1}(S_{N1}^+,L_{N1}^-) + v_{2}(S_{N2}^+,L_{N2}^-),$$ where $N_1=[1,j-1]$, $N_2 = [j,n]$ and the arc sets are $S_{N1}^+ = S_{1j-1}^+$, $S_{N2}^+ =  S_{jn}^+$, $S_{N1}^- = (j-1,j)\cup S_{1j-1}^-$, $S_{N2}^- = (j,j-1) \cup S_{jn}^-$.
\end{lemma}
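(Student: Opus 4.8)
The plan is to follow the template of the proof of Lemma~\ref{prop:separability1} in Appendix~\ref{app:proof_sep1}, with the ``above/sink'' quantities $\alpha^u,\beta^u$ and the forward path arc $(j-1,j)$ systematically replaced by the ``below/source'' quantities $\alpha^d,\beta^d$ and the backward path arc $(j,j-1)$. First I would reduce all three value functions to minimum $s_N$--$t_N$ cut values: by Propositions~\ref{prop:zeros} and~\ref{prop:maxflequiv} (and Appendix~\ref{app:equiv} to drop the assumption $d_j\ge 0$), $v(S^+,L^-)$, $v_1(S_{N1}^+,L_{N1}^-)$ and $v_2(S_{N2}^+,L_{N2}^-)$ equal minimum cut values on the respective path networks, and by \eqref{mincutandm} any node may serve as the evaluation node. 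Since the arc data of the $N_1$-subproblem coincide with those of $G$ on $[1,j-1]$ and those of the $N_2$-subproblem coincide with those of $G$ on $[j,n]$, the forward recursion \eqref{eqn:alphau1}--\eqref{eqn:alphad1} for $N_1$ is the restriction of that for $G$, and symmetrically for the backward recursion and $N_2$. Under the arc assignment of the lemma the forward path arc $(j-1,j)$ enters the $N_1$-subproblem as a zero-objective (free) outgoing arc in $S_{N1}^-$ and is forced to zero in the $N_2$-subproblem, while the backward path arc $(j,j-1)$ enters the $N_2$-subproblem as a free outgoing arc in $S_{N2}^-$ and is forced to zero in the $N_1$-subproblem.

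The inequality $v(S^+,L^-)\le v_1+v_2$ needs no hypothesis: glue a minimum cut of the $N_1$-subproblem with one of the $N_2$-subproblem along nodes $j-1$ and $j$ to form a cut of $G$. A one-line count shows the glued cut has value at most $v_1+v_2$ — the two ``doubled'' copies of the path arcs can only be dropped from the cut, never added — so the minimum cut of $G$ is at most $v_1+v_2$.

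For the reverse inequality I would take a minimum $s_N$--$t_N$ cut $X$ of $G$ and restrict its source side to $N_1$ and $N_2$, obtaining cuts $X_1,X_2$ of the two subproblems. A short case analysis on which side of $X$ the endpoints $j-1,j$ lie shows $\text{val}_{N_1}(X_1)+\text{val}_{N_2}(X_2)=\text{val}_{G}(X)$ in every configuration \emph{except} when both $j-1$ and $j$ lie on the source side of $X$, in which case the two copies are each counted once too often and the sum exceeds $\text{val}_G(X)$ by $u_{j-1}+b_{j-1}$. Hence it suffices to exhibit a minimum cut of $G$ with at least one of $j-1,j$ on the sink side; restricting such a cut gives $v_1+v_2\le \text{val}_{N_1}(X_1)+\text{val}_{N_2}(X_2)=\text{val}_G(X)=v(S^+,L^-)$, and with the previous paragraph we obtain equality. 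This is the only place the hypothesis enters. If the first hypothesis holds — which by \eqref{eqn:alphad1} means $\alpha_{j-1}^u+b_{j-1}\le\alpha_{j-1}^d$ — then either $m_{j-1}^u\le m_{j-1}^d$, so $v(S^+,L^-)=m_{j-1}^u$ and the cut attaining $m_{j-1}^u$ has $j-1$ on the sink side; or else, unwinding \eqref{eqn:alphau1}--\eqref{eqn:betad1} one step around node $j$ and using $\alpha_{j-1}^u+b_{j-1}\le\alpha_{j-1}^d$, one checks $m_j^u\le m_j^d$, so $v(S^+,L^-)=m_j^u$ and the cut attaining $m_j^u$ has $j$ on the sink side. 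The second hypothesis, which by \eqref{eqn:betad1} amounts to $\beta_j^u+u_{j-1}\le\beta_j^d$, is handled symmetrically, reading the backward recursion in place of the forward one.

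The main obstacle is this last step — ruling out, under the hypothesis, the degenerate possibility that \emph{every} minimum cut of $G$ places both $j-1$ and $j$ on the source side. Proving it requires collapsing $m_{j-1}^u,m_{j-1}^d,m_j^u,m_j^d$ via one application of each recursion at node $j$ and then a short sign argument from the hypothesis inequality; the computation is elementary but must be done separately for the $\alpha^d$-form and the $\beta^d$-form of the hypothesis, since they reduce the four $m$-values in different ways (mirroring the two cases in the proof of Lemma~\ref{prop:separability1}). Everything else — the gluing/restriction operations and the case table comparing $\text{val}_{N_1}(X_1)+\text{val}_{N_2}(X_2)$ with $\text{val}_G(X)$ — is bookkeeping of the same form as in that appendix.
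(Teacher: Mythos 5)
Your proposal is correct, but it takes a genuinely different route from the paper's. The paper's proof is a direct two-line computation: it first records the closed forms $v_1(C_1)=\min\{\alpha_{j-1}^u,\ \alpha_{j-1}^d+u_{j-1}\}$ and $v_2(C_2)=\min\{\beta_j^u,\ \beta_j^d+b_{j-1}\}$ (the extra $u_{j-1}$ and $b_{j-1}$ coming from the path arcs being moved into $S_{N1}^-$ and $S_{N2}^-$), then uses the hypothesis to resolve the complementary branch of the recursion at node $j$ (e.g., $\alpha_j^d=\alpha_{j-1}^u+b_{j-1}+d_j+c(S_j^-)$ forces $\alpha_j^u=\alpha_{j-1}^u+c(S_j^+)$), and finally writes $v(C)=m_j=\alpha_{j-1}^u+\min\{\beta_j^u,\ \beta_j^d+b_{j-1}\}$, which is visibly $v_1+v_2$. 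You instead argue structurally: gluing subpath cuts gives $v\le v_1+v_2$ unconditionally, restricting a minimum cut of $G$ gives $v\ge v_1+v_2$ whenever at least one of $j-1,j$ lies on the sink side (your four-case table and the $u_{j-1}+b_{j-1}$ excess in the both-source case are correct), and the hypothesis enters only to exhibit such a minimum cut via the dichotomy $m_{j-1}^u\le m_{j-1}^d$ or $m_j^u\le m_j^d$ — a dichotomy I verified does follow from $\alpha_{j-1}^u+b_{j-1}\le\alpha_{j-1}^d$ (and symmetrically from $\beta_j^u+u_{j-1}\le\beta_j^d$) by one unwinding of the recursions. Your version is longer and leaves the final sign argument as a computation to be done, but it isolates exactly what the hypothesis buys (no minimum cut is forced to put both $j-1$ and $j$ in the source partition) and makes the unconditional direction explicit; the paper's version is more compact and, as a by-product, produces the explicit formulas for $v_1$ and $v_2$ that are reused in the facet proofs of Theorems \ref{thm:facetnec1} and \ref{thm:facetnec2}.
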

\begin{proof}
See Appendix \ref{app:proof_sep2}.
\end{proof}
\begin{lemma} \label{prop:separability3}
	Let $(S^+, L^-)$ be the objective set for the node set for $N=[1,n]$. If $\alpha_j^u = \alpha_{j-1}^d+u_{j-1}+c(S_j^+)$ and $\beta_{j-1}^d = \beta_{j}^u + u_{j-1}+ d_{j-1}+c(S_{j-1}^-)$, then $$v(S^+,L^-) = v_{1}(S_{N1}^+,L_{N1}^-) + v_{2}(S_{N2}^+,L_{N2}^-),$$ where $N_1=[1,j-1]$, $N_2 = [j,n]$ and the arc sets are $S_{N1}^+ = S_{1j-1}^+$, $S_{N2}^+ =  (j-1,j)\cup S_{jn}^+$, $S_{N1}^- = S_{1j-1}^-$, $S_{N2}^- = (j,j-1) \cup S_{jn}^-$.
\end{lemma}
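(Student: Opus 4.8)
This is a separability result of the same flavor as Lemmas \ref{prop:separability1} and \ref{prop:separability2}, and the plan is to reduce it to those via a cut-based argument. The key object is a minimum $s_N$--$t_N$ cut attaining $v(S^+,L^-)$, which by Proposition \ref{prop:maxflequiv} equals the maximum flow value. I will argue that under the two hypotheses the cut ``decouples'' across the boundary between node $j-1$ and node $j$, so that its value splits as a sum of a cut value on $N_1=[1,j-1]$ and a cut value on $N_2=[j,n]$, with the forward path arc $(j-1,j)$ and backward path arc $(j,j-1)$ distributed to the two sides exactly as prescribed in the statement. Concretely, I would first recall that $v(S^+,L^-)=m_j=\min\{m_j^u,m_j^d\}$ for every $j$ (equation \eqref{mincutandm}), and then use the two hypotheses to pin down which of $m_j^u$, $m_j^d$ is the minimum and how $\alpha_j^{\{u,d\}}$ and $\beta_{j-1}^{\{u,d\}}$ behave.

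\textbf{Key steps.} Step 1: unpack the first hypothesis $\alpha_j^u=\alpha_{j-1}^d+u_{j-1}+c(S_j^+)$. By the recursion \eqref{eqn:alphau1} this forces $\alpha_{j-1}^d+u_{j-1}\le \alpha_{j-1}^u$, hence (as in the proof of Lemma \ref{obs:revindep}) $\alpha_j^d=\alpha_{j-1}^d+d_j+c(S_j^-)$ as well. Step 2: unpack the second hypothesis $\beta_{j-1}^d=\beta_j^u+u_{j-1}+d_{j-1}+c(S_{j-1}^-)$ symmetrically through \eqref{eqn:betad1}, obtaining $\beta_j^u+u_{j-1}\le\beta_j^d+b_{j-1}$ and correspondingly $\beta_{j-1}^u=\beta_j^u+c(S_{j-1}^+)$. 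Step 3: combine these to evaluate $m_{j-1}^u$, $m_{j-1}^d$, $m_j^u$, $m_j^d$ via \eqref{mincutup}--\eqref{mincutdown}; the point is that both hypotheses ``select'' the forward path arc $(j-1,j)$ on the cut — on the $\alpha$ side the minimum cut on $[1,j]$ passes below $j-1$ and above $j$, paying $u_{j-1}$, and on the $\beta$ side the minimum cut on $[j-1,n]$ passes above $j-1$ and... — and I will show the two descriptions are consistent and give $v(S^+,L^-)=\alpha_{j-1}^d+u_{j-1}+\beta_j^u+c(S_j^+)-c(S_j^+)$ type identities that regroup as $v_1(S_{N1}^+,L_{N1}^-)+v_2(S_{N2}^+,L_{N2}^-)$ with the arc partition $S_{N1}^+=S_{1j-1}^+$, $S_{N2}^+=(j-1,j)\cup S_{jn}^+$, $S_{N1}^-=S_{1j-1}^-$, $S_{N2}^-=(j,j-1)\cup S_{jn}^-$. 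Step 4: identify $\alpha_{j-1}^d$ restricted to $N_1$ with $v_1$ and $\beta_j^u$ restricted to $N_2$ with $v_2$, using that the value functions $v_1,v_2$ are themselves computed by the same forward/backward recursions on the sub-paths (this is exactly the correspondence invoked in Lemmas \ref{prop:separability1}--\ref{prop:separability2}, so I can cite their proofs in Appendices \ref{app:proof_sep1}--\ref{app:proof_sep2}).

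\textbf{Main obstacle.} The delicate point is bookkeeping the boundary path arcs $(j-1,j)$ and $(j,j-1)$: the first hypothesis is an $\alpha$-condition pointing ``downstream'' while the second is a $\beta$-condition, and one must check they are not contradictory and that each boundary arc is counted exactly once when the cut value is split — in particular that the forward arc $(j-1,j)$, which appears as a capacity $u_{j-1}$ in the $N_1$-side $E_{N1}^-$ and as a variable of $E_{N2}^+$, gets its $u_{j-1}$ charged to exactly one of $v_1,v_2$, and that the backward arc $(j,j-1)$ (with no capacity contribution here, since $S_{N2}^-$ contains it but the corresponding cut passes above $j$) is handled correctly. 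I expect the cleanest route is to avoid re-deriving everything and instead show that the hypotheses of the present lemma imply, after the Step 1--Step 2 simplifications, the hypotheses of one of Lemmas \ref{prop:separability1} or \ref{prop:separability2} (or a symmetric variant), so that the separation follows immediately; alternatively, a direct minimum-cut picture — the decoupling cut is forced to pass below $j-1$ and above $j$ on both sub-paths — makes the additivity transparent. The rest is routine substitution into \eqref{mincutup}--\eqref{mincutdown}, which I would not spell out in full.
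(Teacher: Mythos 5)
Your direct minimum-cut route is essentially the paper's proof: one computes $v_1(C_1)=\min\{\alpha_{j-1}^u,\alpha_{j-1}^d\}$ and $v_2(C_2)=\min\{\beta_j^u+u_{j-1},\beta_j^d+b_{j-1}\}$ for the prescribed boundary-arc assignments, resolves all minima using the two hypotheses, and checks that the sum equals $m_j$. Two caveats on the sketch. First, in Step 2 the consequence of $\beta_{j-1}^d=\beta_j^u+u_{j-1}+d_{j-1}+c(S_{j-1}^-)$ read off \eqref{eqn:betad1} is the stronger inequality $\beta_j^u+u_{j-1}\le\beta_j^d$, not merely $\beta_j^u+u_{j-1}\le\beta_j^d+b_{j-1}$; the stronger form is exactly what you need to conclude $v(C)=m_j^u=\alpha_{j-1}^d+u_{j-1}+\beta_j^u$ (the weaker one only settles $v_2$), and it also resolves the boundary bookkeeping you flag as the main obstacle: $v_1=\alpha_{j-1}^d$ carries no boundary capacity, while $v_2=\beta_j^u+u_{j-1}$ absorbs the $u_{j-1}$ because $(j-1,j)\in S_{N2}^+$ and the minimum cut on $N_2$ passes above node $j$ (and $(j,j-1)\in S_{N2}^-$ contributes nothing for the same reason). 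Second, your proposed shortcut of reducing to Lemma \ref{prop:separability1} or \ref{prop:separability2} cannot work as stated: their hypotheses are indeed implied (the first disjunct of Lemma \ref{prop:separability1} is literally your first hypothesis), but those lemmas assign the boundary arcs $(j-1,j)$ and $(j,j-1)$ to different sides and sets, so they establish a separation identity for different objective sets than the one claimed here; the direct computation is the route that yields the stated partition.
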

\begin{proof}
See Appendix \ref{app:proof_sep3}.
\end{proof}
\begin{lemma} \label{prop:separability4}
	Let $(S^+, L^-)$ be the objective set for the node set for $N=[1,n]$. If $\alpha_j^d = \alpha_{j-1}^u+b_{j-1}+d_j + c(S_j^-)$ and $\beta_{j-1}^u = \beta_{j}^d +b_{j-1}+ c(S_{j-1}^+)$, then $$v(S^+,L^-) = v_{1}(S_{N1}^+,L_{N1}^-) + v_{2}(S_{N2}^+,L_{N2}^-),$$ where $N_1=[1,j-1]$, $N_2 = [j,n]$ and the arc sets are $S_{N1}^+ = (j,j-1) \cup S_{1j-1}^+$, $S_{N2}^+ =S_{jn}^+$, $S_{N1}^- = (j-1,j)\cup S_{1j-1}^-$, $S_{N2}^- = S_{jn}^-$.
\end{lemma}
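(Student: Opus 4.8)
The plan is to follow the template of the proofs of Lemmas~\ref{prop:separability1}--\ref{prop:separability3}, reducing everything to the recursions \eqref{eqn:alphau1}--\eqref{eqn:betad1}, the min-cut identities \eqref{mincutup}--\eqref{mincutandm}, and their endpoint specializations $v(S^+,L^-) = \min\{\alpha_n^u,\alpha_n^d\}$ (take $j=n$ in \eqref{mincutup}--\eqref{mincutandm}, using $\beta_n^u = c(S_n^+)$ and $\beta_n^d = d_n + c(S_n^-)$) and $v(S^+,L^-) = \min\{\beta_1^u,\beta_1^d\}$ (take $j=1$). First I would unwind the two hypotheses. The assumption $\alpha_j^d = \alpha_{j-1}^u+b_{j-1}+d_j+c(S_j^-)$ means the minimum in \eqref{eqn:alphad1} is attained by $\alpha_{j-1}^u+b_{j-1}$, so $\alpha_{j-1}^u+b_{j-1}\le\alpha_{j-1}^d$; hence $\alpha_{j-1}^u\le\alpha_{j-1}^d\le\alpha_{j-1}^d+u_{j-1}$ and \eqref{eqn:alphau1} collapses to $\alpha_j^u = \alpha_{j-1}^u + c(S_j^+)$. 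Dually, $\beta_{j-1}^u = \beta_j^d+b_{j-1}+c(S_{j-1}^+)$ means the minimum in \eqref{eqn:betau1} at node $j-1$ is attained by $\beta_j^d+b_{j-1}$, so $\beta_j^d+b_{j-1}\le\beta_j^u$ (in particular $\beta_j^d\le\beta_j^u$).

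Next I would put $v(S^+,L^-)$ in closed form. By \eqref{mincutandm} it equals $\min\{m_j^u,m_j^d\}$; substituting $\alpha_j^u = \alpha_{j-1}^u+c(S_j^+)$ into \eqref{mincutup} gives $m_j^u = \alpha_{j-1}^u+\beta_j^u$, and substituting the hypothesis for $\alpha_j^d$ into \eqref{mincutdown} gives $m_j^d = \alpha_{j-1}^u+b_{j-1}+\beta_j^d$; since $b_{j-1}+\beta_j^d\le\beta_j^u$, this yields $v(S^+,L^-) = \alpha_{j-1}^u+b_{j-1}+\beta_j^d$. It then remains to show $v_1+v_2$ equals the same quantity. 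For $N_2=[j,n]$, the boundary path arcs $(j-1,j)$ and $(j,j-1)$ are placed in neither $S_{N2}^+$ nor $S_{N2}^-$, so they do not contribute to the terms $c((S_{N2})_j^+)$, $c((S_{N2})_j^-)$ of the recursion; consequently the backward recursion \eqref{eqn:betau1}--\eqref{eqn:betad1} for $N_2$ coincides with $\beta^u,\beta^d$ on $[j,n]$, and the endpoint identity applied at the first node of $N_2$ gives $v_2 = \min\{\beta_j^u,\beta_j^d\} = \beta_j^d$. For $N_1=[1,j-1]$, the arc $(j,j-1)$ (capacity $b_{j-1}$) is added to $S_{N1}^+$ and $(j-1,j)$ (capacity $u_{j-1}$) to $S_{N1}^-$, so they add $b_{j-1}$ to $c((S_{N1})_{j-1}^+)$ and $u_{j-1}$ to $c((S_{N1})_{j-1}^-)$; since the forward recursion for $N_1$ agrees with $\alpha^u,\alpha^d$ on $[1,j-2]$, at the last node $j-1$ it equals $\alpha_{j-1}^u+b_{j-1}$ and $\alpha_{j-1}^d+u_{j-1}$, so $v_1 = \min\{\alpha_{j-1}^u+b_{j-1},\,\alpha_{j-1}^d+u_{j-1}\} = \alpha_{j-1}^u+b_{j-1}$ by $\alpha_{j-1}^u+b_{j-1}\le\alpha_{j-1}^d$. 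Adding, $v_1+v_2 = \alpha_{j-1}^u+b_{j-1}+\beta_j^d = v(S^+,L^-)$.

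The main obstacle I foresee is the boundary bookkeeping rather than the algebra: one must verify, via the conventions in the paragraph preceding Lemma~\ref{prop:separability1}, that the path arcs $(j-1,j)$ and $(j,j-1)$ land in the correct non-path-arc sets $E_{N1}^\pm,E_{N2}^\pm$ and objective/coefficient classes as dictated by the arc lists in the statement, and --- crucially --- that this genuinely makes the recursions for $N_1$ and $N_2$ agree with the recursion for $N$ on the respective interiors, with $u_{j-1}$ and $b_{j-1}$ re-entering as non-path-arc capacities exactly where the closed forms above require. Granting that, the argument is just the chain of equalities above, in direct parallel with the appendix proofs of Lemmas~\ref{prop:separability1}--\ref{prop:separability3}.
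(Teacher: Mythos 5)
Your proposal is correct and follows essentially the same route as the paper's Appendix proof: you use the hypotheses to collapse the recursions to $\alpha_j^u=\alpha_{j-1}^u+c(S_j^+)$ and $\beta_j^d+b_{j-1}\le\beta_j^u$, identify $v_1=\min\{\alpha_{j-1}^u+b_{j-1},\alpha_{j-1}^d+u_{j-1}\}=\alpha_{j-1}^u+b_{j-1}$ and $v_2=\min\{\beta_j^u,\beta_j^d\}=\beta_j^d$ via the boundary-arc bookkeeping, and match their sum to $v(C)=m_j^d=\alpha_{j-1}^u+b_{j-1}+\beta_j^d$. Your write-up is if anything slightly more explicit than the paper's about why each minimum is attained where claimed.
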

\begin{proof}
See Appendix \ref{app:proof_sep4}.
\end{proof}

In the remainder of this section, we give necessary and sufficient conditions for path cover and pack inequalities \eqref{ineq:submod1exp} and \eqref{ineq:submod2exp} to be facet-defining for the convex hull of $\mathcal{P}$.
\begin{theorem}
\label{thm:facetnec1}
Let $N=[1,n]$, and $d_j\geq 0$ for all $j\in N$. If $L^- = \emptyset$ and the set $(S^+,S^-)$ is a path cover for $N$, then the following conditions are necessary for path cover inequality \eqref{ineq:submod1exp} to be facet-defining for conv$(\mathcal{P})$:
\begin{enumerate}[(i)]
	\item \label{i1:nec1} $\rho_t(C\setminus\{t\}) < c_t$, for all $t\in C$,
	\item \label{i1:nec3} $\max_{t\in S^+} \rho_t(C\setminus \{ t \}) > 0$,
	\item \label{i1:nec6} if a node $j\in [2,n]$ is forward independent for set $(S^+,S^-)$, then node $j-1$ is not backward independent for set $(S^+,S^-)$,
	\item \label{i1:nec6.5}if a node $j\in [1,n-1]$ is backward independent for set $(S^+,S^-)$, then node $j+1$ is not forward independent for set $(S^+,S^-)$,
	\item \label{i1:nec7} if $\max_{t\in S_i^+} (c_t-\lambda_i)^+ = 0$ for $i=p,\dots,n$ for some $p\in[2,n]$, then the node $p-1$ is not forward independent for $(S^+,S^-)$,
	\item \label{i1:nec8} if $\max_{t\in S_i^+} (c_t-\lambda_i)^+ = 0$ for $i=1,\dots,q$ for some $q\in [1,n-1]$, then the node $q+1$ is not backward independent for $(S^+,S^-)$.
\end{enumerate}
\end{theorem}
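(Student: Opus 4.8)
The plan is to establish each of (i)--(vi) by contraposition. Assume throughout that the path cover inequality \eqref{ineq:submod1exp} is facet-defining and let $F$ be the face of $\operatorname{conv}(\mathcal{P})$ it induces; since $\dim\big(\operatorname{conv}(\mathcal{P})\big)=2|E|+|N|-2$ by (A.1)--(A.2), facetness means $\dim F=2|E|+|N|-3$. For every failing condition I will exhibit one of two obstructions: (a) a valid equation satisfied by all of $F$ that is linearly independent of the flow-balance equations \eqref{const:flowbalance} and of \eqref{ineq:submod1exp}; or (b) a representation of \eqref{ineq:submod1exp} as a nonnegative combination of other valid inequalities for $\mathcal{P}$, none of them a positive multiple of \eqref{ineq:submod1exp}. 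In case (b), $F$ lies in the intersection of the corresponding proper faces, so $\dim F\le 2|E|+|N|-4$; in case (a) the same bound is immediate. The proper faces used as building blocks are those cut out by $y_t\le c_tx_t$, by $y_t\ge 0$, and by the path cover (equivalently, submodular) inequalities attached to sub-paths of $N$; the feasible points that certify these faces are proper are the ``route along the path and toggle one non-path arc'' constructions, using (A.1)--(A.5). Since $L^-=\emptyset$ we have $C=S^+$, the $x$-terms on the right of \eqref{ineq:submod1exp} disappear, and since $d_j\ge 0$ the max-flow/min-cut description of $v$ and of the $\lambda_j$ from Proposition~\ref{prop:maxflequiv} and the recursions \eqref{eqn:alphau1}--\eqref{eqn:betad1} is available; moreover, because $(S^+,S^-)$ is a path cover, $m_j^d=d_{1n}+c(S^-)=v(C)$ for every $j$, so $\lambda_j=(m_j^u-v(C))^+$.

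Conditions (i) and (ii) are the ``degenerate coefficient'' cases, handled directly. For (i): $v(C\cup\{t\})-v(C)\le c_t$ always (adding an arc of capacity $c_t$ to the objective of (F\ref{opt:vdefn}) raises the maximum flow by at most $c_t$), so with $\rho_t(C\setminus\{t\})=(c_t-\lambda_j)^+$ a violation of (i) at $t\in S_j^+$ forces $\lambda_j=0$, and the $t$-term of \eqref{ineq:submod1exp} becomes exactly $y_t+c_t(1-x_t)$; one then writes \eqref{ineq:submod1exp} as the sum of the submodular inequality \eqref{ineq:submod1} for the objective set $C\setminus\{t\}$ and the capacity bound $y_t\le c_tx_t$ (checking that the $\lambda_j=0$ degeneracy makes the marginal and lifting terms line up), so every point of $F$ satisfies the equation $y_t=c_tx_t$, an obstruction of type (a). For (ii): if $(c_t-\lambda_j)^+=0$ for every $t\in S^+$, then \eqref{ineq:submod1exp} collapses to $y(S^+)-y(E^-\setminus S^-)\le d_{1n}+c(S^-)$; summing \eqref{const:flowbalance} over $j\in N$ telescopes (using $i_0=i_n=r_0=r_n=0$) to the identity $y(E^+)-y(E^-)=d_{1n}$, and adding to this identity the inequalities $y_t\ge 0$ for $t\in E^+\setminus S^+$ and $y_t\le c_t$ for $t\in S^-$ recovers \eqref{ineq:submod1exp}, an obstruction of type (b).

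Conditions (iii)--(vi) are handled through the separability Lemmas~\ref{prop:separability1}--\ref{prop:separability4}, Corollary~\ref{cor:independence}, and the local-coefficient Remarks~\ref{revlambdas} and \ref{fwdlambdas}. The pivotal observation is that forward independence of a node $k$ makes $v(S^+,L^-)$ split across the edge between $k$ and $k+1$ --- apply whichever of Lemmas~\ref{prop:separability1}--\ref{prop:separability4} matches the defining equality, with split point $k+1$ --- and backward independence of $k$ makes $v$ split across the edge between $k-1$ and $k$; correspondingly \eqref{ineq:submod1} for $N$ becomes the sum of the submodular inequalities for the two sub-paths, with the coefficients $\lambda_\ell=(m_\ell^u-m_\ell^d)^+$ distributed between them exactly as Lemmas~\ref{obs:revindep}--\ref{obs:fwdindep} and Corollary~\ref{cor:independence} dictate. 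If (iii) or (iv) fails, two such splits occur at edges enclosing only the nodes $\{j-1,j\}$, yielding a three-way decomposition $v=v_{[1,\cdot]}+v_{[\cdot,\cdot]}+v_{[\cdot,n]}$ and hence a decomposition of \eqref{ineq:submod1exp} into sub-path cover inequalities, at least two of which are nontrivial because $(S^+,S^-)$ is a path cover (Remarks~\ref{revlambdas}--\ref{fwdlambdas} identify the outer coefficients explicitly) --- an obstruction of type (b). If (v) or (vi) fails, then on the terminal block $[p,n]$ (resp.\ the initial block $[1,q]$) all coefficients $(c_t-\lambda_i)^+$ vanish, so, exactly as in the computation for (ii), the sub-path cover inequality on that block is a nonnegative combination of the sub-path flow-balance identity and arc bounds; combined with forward (resp.\ backward) independence of node $p-1$ (resp.\ $q+1$), which splits off precisely that block, \eqref{ineq:submod1exp} becomes a nontrivial part plus this redundant part, again an obstruction of type (b).

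I expect the main difficulty to lie entirely in (iii)--(vi). For each of the several sub-cases distinguished by which of the two $\alpha$-equalities defining backward independence and which of the two $\beta$-equalities defining forward independence hold, one must (a) match the sub-case to the correct Lemma among \ref{prop:separability1}--\ref{prop:separability4}, tracking which path arc $(j-1,j)$ or $(j,j-1)$ is absorbed into which sub-path's objective set, and verify that the $\bar x$-coefficients of \eqref{ineq:submod1exp} split exactly into those of the two sub-path cover inequalities (this uses the closed forms for $\lambda_j$ and the locality of Lemmas~\ref{obs:revindep}--\ref{obs:fwdindep} and Corollary~\ref{cor:independence}); and (b) argue --- using that $(S^+,S^-)$ is a path cover together with (A.3)--(A.5) --- that each piece other than a genuinely redundant terminal/initial block as in (v)--(vi) is itself a nontrivial valid inequality, so that the decomposition really drops $\dim F$ below $2|E|+|N|-3$. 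The auxiliary feasible points certifying that $y_t\le c_tx_t$ in (i) and the sub-path inequalities in (ii), (v), (vi) define proper faces are the same point constructions used for the converse (sufficiency) direction.
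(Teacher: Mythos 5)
Your handling of conditions \eqref{i1:nec1} and \eqref{i1:nec3} matches the paper's: \eqref{i1:nec1} by summing the inequality for $C\setminus\{t\}$ with $y_t\le c_tx_t$ (the paper obtains domination from submodularity, $\rho_t(S^+\setminus\{t\})\le\rho_t(S^+\setminus\{t,t'\})$, rather than from the $\lambda_j=0$ degeneracy, but the effect is the same), and \eqref{i1:nec3} by telescoping the flow-balance equations. For \eqref{i1:nec7}--\eqref{i1:nec8} your plan is also the paper's (split off the coefficient-free terminal block and absorb it into flow balance plus bounds), though you skip the one non-obvious verification the paper performs there: after the split the block $[p,n]$ acquires \emph{new} coefficients $\bar\lambda_i$, and one must check $\bar\lambda_i\ge\lambda_i$ (via $\alpha_p^u-\alpha_p^d+c(S_p^+)-d_p-c(S_p^-)\le u_{p-1}$ and its analogues) to conclude that $(c_t-\bar\lambda_i)^+$ remains zero on that block.

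The genuine gap is in \eqref{i1:nec6}--\eqref{i1:nec6.5}. You assert that a \emph{single} forward- (or backward-) independence condition at a node already decomposes the path cover \emph{inequality} across the corresponding edge, and you then combine the two conditions in a violation into a three-way decomposition with middle block $\{j-1,j\}$. But a single independence equality only splits the \emph{value function} $v$ (and only via Lemma \ref{prop:separability1} or \ref{prop:separability2}; the mixed Lemmas \ref{prop:separability3}--\ref{prop:separability4} require an $\alpha$-equality \emph{and} a $\beta$-equality, so ``apply whichever lemma matches the defining equality'' cannot reach them from one condition). To decompose the inequality itself you additionally need every coefficient $\lambda_i$ to be computable on its own side of the cut, and Corollary \ref{cor:independence} delivers that only on one side per condition: backward independence at $j$ fixes $\lambda_k$ for $k\ge j$ against the left data, forward independence at $j-1$ fixes $\lambda_k$ for $k\le j-1$ against the right data. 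This is precisely why the paper's proof works with the \emph{conjunction} of backward independence at $j$ and forward independence at $j-1$ --- both statements about the same edge between $j-1$ and $j$ --- and performs a single two-way split there, running four cases according to which $\alpha$- and which $\beta$-equality hold and pairing each case with one of Lemmas \ref{prop:separability1}--\ref{prop:separability4}. In your three-way split the coefficients on the outer blocks $[1,j-2]$ and $[j+1,n]$ are uncontrolled (nothing makes node $j-2$ forward independent or node $j+1$ backward independent for the sub-paths), so the sum of your three sub-path cover inequalities need not reproduce or dominate \eqref{ineq:submod1exp}, and the non-facet conclusion does not follow. Your closing paragraph flags the coefficient-matching as a verification step, but the issue is structural, not clerical: with the decomposition you chose, the matching is false in general.
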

\begin{proof}
\begin{enumerate}[(i)]
	\item If for some $t'\in S^+$, $\rho_{t'}(C\setminus\{t'\}) \ge c_{t'}$, then the path cover inequality with the objective set $S^+\setminus\{t'\}$ summed with $y_{t'}\leq c_{t'}x_{t'}$ results in an inequality at least as strong. Rewriting the path cover inequality using the objective set $S^+$, we obtain
		\begin{align*}
			\sum_{t\in S^+\setminus\{t'\}}(y_t+\rho_t(S^+\setminus \{t\})(1-x_t))+y_{t'} &\le v(S^+) - \rho_{t'}(S^+\setminus\{t'\})(1-x_{t'})+y(E^-\setminus S^-)\\
			&=v(S^+\setminus\{t'\})+\rho_{t'}(S^+\setminus\{t'\})x_{t'}+y(E^-\setminus S^-).
			\end{align*}
		Now, consider summing the path cover inequality for the objective set $S^+\setminus\{t'\}$
		\begin{align*}
			&\sum_{t\in S^+\setminus\{t'\}}(y_t+\rho_t(S^+\setminus\{t,t'\})(1-x_t)) \le v(S\setminus\{t'\})+y(E^-\setminus S^-),
		\end{align*}
		and $y_{t'}\le c_{t'}x_{t'}$. The resulting inequality dominates inequality~\eqref{ineq:submod1} because $\rho_t(S^+\setminus \{t\})\leq \rho_t(S^+\setminus\{t,t'\})$, from the submodularity of the set function $v$. \textcolor{red}{If the assumption of $L^-=\emptyset$ is dropped, this condition extends for arcs $t \in L^-$ as $\rho_t(C\setminus \{t\}) > -c_t$ with a similar proof.} 

	\item If $L^-=\emptyset$ and $\max_{t\in S^+} \rho_t (C\setminus \{ t \}) = 0$, then summing flow balance equalities \eqref{const:flowbalance} for all nodes $j\in N$ gives an inequality at least as strong.

 \item Suppose a node $j$ is forward independent for $(S^+,S^-)$ and the node $j-1$ is backward independent for $(S^+,S^-)$ for some $j\in [2,n]$. Lemmas \ref{prop:separability1}--\ref{prop:separability4} show that the nodes $N$ and the arcs $C=S^+\cup L^-$ can be partitioned into $N_1= [1,j-1]$, $N_2=[j,n]$ and $C_1$, $C_2$ such that the sum of the minimum cut values for $N_1$, $N_2$ is equal to the minimum cut for $N$. From Remarks \ref{revlambdas} and \ref{fwdlambdas} and Corollary \ref{cor:independence}, it is easy to see that $\lambda_i$ for $i\in N$ will not change by the partition procedures described in Lemmas \ref{prop:separability1}--\ref{prop:separability4}. We examine the four cases for node $j-1$ to be forward independent and node $j$ to be backward independent for the set $(S^+,S^-)$.
\begin{enumerate}[(a)]
	\item Suppose $\alpha_j^u = \alpha_{j-1}^d+u_{j-1}+c(S_j^+)$ and $\beta_{j-1}^u = \beta_{j}^d + b_{j-1} +c(S_{j-1}^+)$. Consider the partition procedure described in Lemma \ref{prop:separability1}, where $S_{N1}^+ = (j,j-1)\cup S_{1j-1}^+$, $S_{N2}^+ = (j-1,j)\cup S_{jn}^+$, $S_{N1}^- = S_{1j-1}^-$, $S_{N2}^- = S_{jn}^-$. Then, the path cover inequalities for nodes $N_1$ and $N_2$
\begin{align*}
	r_{j-1} + \sum_{i=1}^{j-1}\sum_{t\in S_i^+}\left(y_t + (c_t-\lambda_i)^+(1-x_t) \right) \leq v_1(S_{N1}^+) + \sum_{i=1}^{j-1}y(E_i^-\setminus S_i^-) + i_{j-1}
\end{align*}
and
\begin{align*}
	i_{j-1} + \sum_{i=j}^{n}\sum_{t\in S_i^+}\left(y_t + (c_t-\lambda_i)^+(1-x_t) \right) \leq v_2(S_{N2}^+) + \sum_{i=j}^{n}y(E_i^-\setminus S_i^-) + r_{j-1}
\end{align*}
summed gives
\begin{align*}
	\sum_{i=1}^{n}\sum_{t\in S_i^+}\left(y_t + (c_t-\lambda_i)^+(1-x_t) \right) \leq v(S^+) + y(E^-\setminus S^-),
\end{align*}
which is the path cover inequality for $N$ with the objective set $S^+$.
	\item Suppose $\alpha_j^d = \alpha_{j-1}^u+b_{j-1}+d_{j-1}+c(S_j^-)$ and $\beta_{j-1}^d = \beta_{j}^u + u_{j-1}+ d_{j-1}+c(S_{j-1}^-)$. Consider the partition described in Lemma \ref{prop:separability2}, where $S_{N1}^+ = S_{1j-1}^+$, $S_{N2}^+ =  S_{jn}^+$, $S_{N1}^- = (j-1,j)\cup S_{1j-1}^-$, $S_{N2}^- = (j,j-1) \cup S_{jn}^-$. The path cover inequalities for nodes $N_1$ and $N_2$
\begin{align*}
	\sum_{i=1}^{j-1}\sum_{t\in S_i^+} \left(y_t + (c_t-\lambda_i)^+(1-x_t) \right) \leq v_1(S_{N1}^+) + \sum_{i=1}^{j-1}y(E_i^-\setminus S_i^-)
\end{align*}
and
\begin{align*}
	\sum_{i=j}^{n}\sum_{t\in S_i^+} \left(y_t + (c_t-\lambda_i)^+(1-x_t) \right) \leq v_2(S_{N2}^+) + \sum_{i=j}^{n}y(E_i^-\setminus S_i^-).
\end{align*}
summed gives the path cover inequality for nodes $N$ and arcs $C$.

	\item Suppose $\alpha_j^u = \alpha_{j-1}^d+u_{j-1}+c(S_j^+)$ and $\beta_{j-1}^d = \beta_{j}^u + u_{j-1}+ d_{j-1}+c(S_{j-1}^-)$. Consider the partition described in Lemma \ref{prop:separability3}, where $S_{N1}^+ = S_{1j-1}^+$, $S_{N2}^+ =  (j-1,j)\cup S_{jn}^+$, $S_{N1}^- = S_{1j-1}^-$, $S_{N2}^- = (j,j-1) \cup S_{jn}^-$. The path cover inequalities for nodes $N_1$ and $N_2$
\begin{align*}
	\sum_{i=1}^{j-1}\sum_{t\in S_i^+} \left(y_t + (c_t-\lambda_i)^+(1-x_t) \right) \leq v_1(S_{N1}^+) + \sum_{i=1}^{j-1}y(E_i^-\setminus S_i^-) + i_{j-1}
\end{align*}
and
\begin{align*}
	i_{j-1}+\sum_{i=j}^{n}\sum_{t\in S_i^+} \left(y_t + (c_t-\lambda_i)^+(1-x_t) \right) \leq v_2(S_{N2}^+) + \sum_{i=j}^{n}y(E_i^-\setminus S_i^-).
\end{align*}
summed gives the path cover inequality for nodes $N$ and arcs $C$.

	\item Suppose $\alpha_j^d = \alpha_{j-1}^u+b_{j-1}+d_j + c(S_j^-)$ and $\beta_{j-1}^u = \beta_{j}^d +b_{j-1}+ c(S_{j-1}^+)$. Consider the partition described in Lemma \ref{prop:separability4}, where $S_{N1}^+ = (j,j-1) \cup S_{1j-1}^+$, $S_{N2}^+ =S_{jn}^+$, $S_{N1}^- = (j-1,j)\cup S_{1j-1}^-$, $S_{N2}^- = S_{jn}^-$. The path cover inequalities for nodes $N_1$ and $N_2$
\begin{align*}
	r_{j-1} + \sum_{i=1}^{j-1}\sum_{t\in S_i^+} \left(y_t + (c_t-\lambda_i)^+(1-x_t) \right) \leq v_1(S_{N1}^+) + \sum_{i=1}^{j-1}y(E_i^-\setminus S_i^-)
\end{align*}
and
\begin{align*}
	\sum_{i=j}^{n}\sum_{t\in S_i^+} \left(y_t + (c_t-\lambda_i)^+(1-x_t) \right) \leq v_2(S_{N2}^+) + \sum_{i=j}^{n}y(E_i^-\setminus S_i^-) + r_{j-1}.
\end{align*}
summed gives the path cover inequality for nodes $N$ and arcs $C$.
\end{enumerate}
\item The same argument for condition \eqref{i1:nec6} above also proves the desired result here.
\item Suppose $(c_t-\lambda_i)^+ = 0$ for all $t\in S_i^+$ and $i\in [p,n]$ and the node $p-1$ is forward independent for some $p\in [2,n]$. Then, we partition the node set $N=[1,n]$ into $N_1 = [1,p-1]$ and $N_2=[p,n]$. We follow Lemma \ref{prop:separability1} if $\beta_{p-1}^u=\beta_p^d+b_{p-1}+c(S_{p-1}^+)$ and follow Lemma \ref{prop:separability2} if $\beta_{p-1}^d = \beta_p^u +u_{p-1}+d_{p-1} + c(S_{p-1}^-)$ to define $S_{N1}^+$, $S_{N1}^-$, $S_{N2}^+$ and $S_{N2}^-$. Remark \ref{fwdlambdas} along with the partition procedure described in Lemma \ref{prop:separability1} or \ref{prop:separability2} implies that $\lambda_i$ will remain unchanged for $i\in N_1$. The path cover inequality for nodes $N$ and arcs $C$ is
\begin{align*}
	y(S^+) + \sum_{i=1}^{p-1} \sum_{t\in S_i^+} (c_t-\lambda_i)^+(1-x_t) \leq v(S^+) + y(E^-\setminus S^-).
\end{align*}
If $\beta_{p-1}^u=\beta_{j}^d+b_{p-1}+c(S_{p-1}^+)$, then the path cover inequality for nodes $N_1$ and arcs $S_{N1}^+$, $S_{N1}^-$ described in Lemma \ref{prop:separability1} is
\begin{align*}
	r_{p-1} + \sum_{i=1}^{p-1} \sum_{t\in S_i^+} \left(y_t + (c_t-\lambda_i)^+(1-x_t)\right) \leq v(S_{N1}^+) + \sum_{i=1}^{p-1} y(E_i^-\setminus S_i^-) + i_{p-1}.
\end{align*}
Moreover, let $\bar{m}_p^u$ and $\bar{m}_p^d$ be the values of minimum cut that goes above and below node $p$ for the node set $N_2$ and arcs $S_{N2}^+$, $S_{N2}^-$ and observe that $$\bar{m}_p^u = \beta_p^u + u_{p-1}\text{ and }\bar{m}_p^d = \beta_p^d.$$ Then, comparing the difference $\bar{\lambda}_p:= (\bar{m}_p^u - \bar{m}_p^d)^+ =(\beta_p^u -\beta_p^d + u_{p-1})^+$ to $\lambda_p = (m_p^u-m_p^d)^+ = (\beta_p^u -\beta_p^d + \alpha_p^u -\alpha_p^d + c(S_p^+)-d_p-c(S_p^-))^+ $, we observe that $\bar{\lambda}_p \geq \lambda_p$ since $\alpha_p^u-\alpha_p^d + c(S_p^+)-d_p-c(S_p^-) \leq u_{p-1}$ from \eqref{eqn:alphau1}--\eqref{eqn:alphad1}. Since $(c_t-\lambda_p)^+=0$, then $(c_t-\bar{\lambda}_p)^+ = 0$ as well. Using the same technique, it is easy to observe that $\bar{\lambda}_i \geq \lambda_i$ for $i\in [p+1,n]$ as well. As a result, the path cover inequality for $N_2$ with sets $S_{N2}^+$, $S_{N2}^-$ is
\begin{align*}
	i_{p-1} + \sum_{i=p}^n y(S_i^+) \leq v(S_{N2}^+) + \sum_{i=p}^n y(E_i^-\setminus S_i^-) + r_{p-1}.
\end{align*}
The path cover inequalities for $N_1$, $S_{N1}^+$, $S_{N1}^-$ and for $N_2$, $S_{N2}^+$, $S_{N2}^-$ summed gives the path cover inequality for $N$, $S^+$, $S^-$.

Similarly, if $\beta_{p-1}^d=\beta_{j}^u+u_{p-1}+d_{p-1}+c(S_{p-1}^-)$, the proof follows very similar to the previous argument using Lemma \ref{prop:separability2}. Letting $\bar{m}_p^u$ and $\bar{m}_p^d$ be the values of minimum cut that goes above and below node $p$ for the node set $N_2$ and arcs $S_{N2}^+$, $S_{N2}^-$, we get $$\bar{m}_p^u = \beta_p^u\text{ and }\bar{m}_p^d + b_{p-1} = \beta_p^d$$ under this case. Now, notice that that $\alpha_p^u-\alpha_p^d + c(S_p^+)-d_p-c(S_p^-) \geq -b_{p-1}$ from \eqref{eqn:alphau1}--\eqref{eqn:alphad1}, which leads to $\bar{\lambda}_p \geq \lambda_p$. Then the proof follows same as above.

\item The proof is similar to that of the necessary condition \eqref{i1:nec7}. We use Lemmas \ref{prop:separability3} and \ref{prop:separability4} and Remark \ref{fwdlambdas} to partition the node set $N$ and arcs $S^+$, $S^-$ into node sets $N_1=[1,q]$ and $N_2=[q+1,n]$ for $q\in [2,n]$ and arcs $S_{N1}^+$, $S_{N1}^-$ and $S_{N2}^+$, $S_{N2}^-$. Next, we check the values of minimum cut that goes above and below node $q$ for the node set $N_1$ and arcs $S_{N1}^+$, $S_{N1}^-$. Then, observing $-u_{q}\leq \beta_{q}^u-\alpha_{q}^d + c(S_q^+)-d_q-c(S_q^-) \leq b_{q}$ from \eqref{eqn:betau1}--\eqref{eqn:betad1}, it is easy to show that the coefficients $x_t$ for $t\in S_{N1}^+$ are equal to zero in the path cover inequality for node set $N_1$. As a result, the path cover inequalities for $N_1$, $S_{N1}^+$, $S_{N1}^-$ and for $N_2$, $S_{N2}^+$, $S_{N2}^-$ summed gives the path cover inequality for $N$, $S^+$, $S^-$.
\end{enumerate}
\end{proof}
\textcolor{red}{\begin{remark}
	If the node set $N$ consists of a single node, then the conditions \eqref{i1:nec1} and \eqref{i1:nec3} of Theorem \ref{thm:facetnec1} reduce to the sufficient facet conditions of flow cover inequalities given in \cite[Theorem 4]{PvRW85} and \cite[Theorem 6]{vRW86}. In this setting, conditions \eqref{i1:nec6}--\eqref{i1:nec8} are no longer relevant.
\end{remark}}

\ignore{
\begin{corollary}
	If $S^+ = E^+$ and $L^- = \emptyset$, then the path cover inequality \eqref{ineq:submod1exp} is not facet defining for $\text{conv}(\mathcal{P})$.
\end{corollary}
\begin{proof}
	Since all incoming arcs are included in $S^+$, $v(E^+) = d_{1n}$. Moreover, due to assumption (A.1), we know that $v(E^+\setminus \{ t \}) = d_{1n}$ for all $t\in E^+$ as well. Then $\rho_t(S^+) = 0$ for all $t\in S^+$ which contradicts the necessary condition \eqref{i1:nec3}.
\end{proof}

\begin{corollary}
	If $|S^+| = 1$ and $L^- = \emptyset$, then the path cover inequality is not facet defining for $\text{conv}(\mathcal{P})$.
\end{corollary}
\begin{proof}
	If there is only one arc in $S^+ = \{t\}$, then $\rho_t(C\setminus \{t\}) = \min \{ c_t, d_{1n} \} = c_t$ due to assumption (A.3). This result contradicts with the necessary condition \eqref{i1:nec1}.
\end{proof}}

\begin{theorem}
Let $N=[1,n]$, $E^- = \emptyset$, $d_j> 0$ and $|E_j^+|= 1$, for all $j\in N$ and let the set $S^+$ be a path cover. The necessary conditions in Theorem \ref{thm:facetnec1} along with \begin{enumerate} [(i)]
\item \label{i1:suf1} $(c_t-\lambda_j)^+ > 0$ for all $t\in S_j^+$, $j\in N$,
\item \label{i1:suf2} $(c_t-\lambda_j)^+ < c(E^+\setminus S^+)$ for all $t\in S_j^+$, $j\in N$
\ignore{\item \label{i1:suf3} there does not exist any forward or backward independent node in $N$ for set $S^+$}
\end{enumerate}
are sufficient for path cover inequality \eqref{ineq:submod1exp} to be facet-defining for $\text{conv}(\mathcal{P})$.
\end{theorem}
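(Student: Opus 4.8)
The plan is to prove facetness by the indirect method. Since $E^-=\emptyset$ and $|E_j^+|=1$ for all $j\in[1,n]$, assumptions (A.1)--(A.2) give $\dim(\text{conv}(\mathcal P))=2|E|+|N|-2=3n-2$. Partition $N$ into $N^S=\{j:E_j^+\subseteq S^+\}$ and $N^0=N\setminus N^S$, and for $j\in N^0$ let $t_j$ be the unique arc of $E_j^+$. The path cover inequality \eqref{ineq:submod1exp} is valid (it is a submodular inequality), so it suffices to show that every linear equation $\pi^xx+\pi^yy+\pi^ii+\pi^rr=\pi_0$ that holds on the face $F$ it induces is a linear combination $\eta\cdot\eqref{ineq:submod1exp}+\sum_{j=1}^{n}\sigma_j\cdot(\text{flow balance \eqref{const:flowbalance} at }j)$. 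Concretely, I would show one can choose $\eta>0$ and reals $\sigma_j$ so that $\pi^x_t=-\eta(c_t-\lambda_{j(t)})^+$ for $t\in S^+$ and $\pi^x_t=0$ for $t\notin S^+$, $\pi^y_t=\eta+\sigma_{j(t)}$ for $t\in S^+$ and $\pi^y_t=\sigma_{j(t)}$ for $t\notin S^+$, $\pi^i_j=-\pi^r_j=\sigma_{j+1}-\sigma_j$, and $\pi_0=\eta\,d_{1n}+\sum_j\sigma_jd_j$, where $j(t)$ is the node with $t\in E_{j(t)}^+$.

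The engine is a family of feasible points on $F$ obtained from a reference flow. Since $(S^+,\emptyset)$ is a path cover, $v(S^+)=d_{1n}$, so there is $\bar z=(\bar x,\bar y,\bar i,\bar r)\in\mathcal P$ with $\bar x_t=1$ and $\bar y(S^+)=d_{1n}$ for $t\in S^+$ and $\bar y_t=0$ for $t\notin S^+$; it lies in $F$. From $\bar z$ and its variants I would extract four perturbation types. \textbf{(a) Path-arc cycles:} raising $i_j$ and $r_j$ together by a small $\epsilon$ preserves \eqref{const:flowbalance} and leaves \eqref{ineq:submod1exp} untouched; carried out at a point of $F$ with slack on $u_j$ and $b_j$ (available via (A.2)--(A.5)), this forces $\pi^i_j+\pi^r_j=0$. \textbf{(b) Idle non-cover arcs:} for $t\notin S^+$, flipping $x_t$ between $0$ and $1$ with $y_t=0$ keeps the point in $F$, so $\pi^x_t=0$. \textbf{(c) Closing a cover arc:} for $t^*\in S^+\cap E_j^+$, condition~(i) gives $c_{t^*}>\lambda_j$, so setting $x_{t^*}=0$ forces, through equality in \eqref{ineq:submod1exp}, a reconfiguration carrying an extra $c_{t^*}-\lambda_j$ units that $S^+$ cannot absorb; by condition~(ii), $c_{t^*}-\lambda_j<c(E^+\setminus S^+)$, so that excess routes through $E^+\setminus S^+$ with capacity to spare, and the resulting points of $F$ (together with (a)--(b) and $\bar z$) pin $\pi^x_{t^*}$ and $\pi^y_{t^*}$ into the claimed form and identify a positive multiplier $\eta$. \textbf{(d) Flow shifts:} against a point of $F$ in which a cover arc is closed, shifting $\epsilon$ units between $t_j$, $j\in N^0$, and the incident path arcs stays in $F$ and ties $\pi^y_{t_j}$ to $\pi^i,\pi^r$, i.e.\ to $\sigma_j$.

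With $\sigma_j$ defined by the telescoping $\sigma_{j+1}-\sigma_j=\pi^i_j$ (one normalization being absorbed into $\sum_j(\text{flow balance at }j)$), steps (a)--(d) determine $\pi^y$ and $\pi^x$ on every arc, and evaluating the equation at $\bar z$ fixes $\pi_0$; this yields the desired linear combination. The one place the previously-established necessary conditions of Theorem~\ref{thm:facetnec1} are needed is to guarantee that the multiplier $\eta$ produced by closing different cover arcs is the \emph{same} $\eta$: if the path "separated'' at some node --- a forward independent node adjacent to a backward independent one, or the vanishing-contribution patterns of conditions~(v)--(vi) --- then by Lemmas~\ref{prop:separability1}--\ref{prop:separability4} inequality \eqref{ineq:submod1exp} would split as a sum of two sub-path inequalities and the hyperplane could scale the two pieces independently. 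Ruling this out (which the hypotheses of Theorem~\ref{thm:facetnec1} do) couples the values $\lambda_j$ along the entire path and forces a single $\eta$.

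The main obstacle I anticipate is exactly this coupling step in (c): showing that closing an \emph{arbitrary} cover arc $t^*\in E_j^+$ and rerouting still leaves enough feasible slack --- in $E^+\setminus S^+$ and on the path arcs simultaneously --- to generate all the affinely independent directions, and that the multiplier read off is position-independent. This forces one to track how the parametric min-cut values $m_j^u,m_j^d$ (hence $\lambda_j$) behave under the partitions of Lemmas~\ref{prop:separability1}--\ref{prop:separability4} via recursions \eqref{eqn:alphau1}--\eqref{eqn:betad1}, and to use condition~(ii) with its \emph{strict} inequality precisely to keep the non-cover arcs from being pinned to capacity. The residual verifications --- that $\bar z$ and every $\epsilon$-perturbation respect the bounds $u_j,b_j,c_t$ --- are routine consequences of (A.2)--(A.5).
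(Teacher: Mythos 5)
Your plan is sound in outline but takes a genuinely different route from the paper. The paper proves facetness by the \emph{direct} method: Algorithm \ref{alg:feasible} builds a reference point $\bar w^e$ on the face $F$ in which every arc of $S^+$ carries positive flow (this is where condition (\ref{i1:suf1}) enters) and at most one of $i_j,r_j$ is positive, and then exhibits $2|E|+n-2$ affinely independent points in four families --- shifting $\varepsilon$ of flow between cover arcs, closing each cover arc and rerouting the deficit $(c_t-\lambda_j)^+$ through $E^+\setminus S^+$, opening/perturbing the non-cover arcs, and raising $i_j$ and $r_j$ simultaneously. Your indirect argument uses exactly the dual versions of these same perturbations ((a)--(d) correspond one-to-one to the paper's point families), so the two proofs rest on the same feasibility facts; what the indirect method buys is a cleaner bookkeeping of \emph{which} coefficients each perturbation pins down (e.g.\ $\pi^i_j+\pi^r_j=0$, $\pi^x_t=0$ for $t\notin S^+$), at the cost of having to argue explicitly that the multiplier $\eta$ is global, which the direct method gets for free from affine independence. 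Your identification of conditions (\ref{i1:suf1})--(\ref{i1:suf2}) as the tools for step (c), and of the necessary conditions of Theorem \ref{thm:facetnec1} as the source of the coupling along the path, matches the paper's use of them.

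One imprecision you should repair before executing the plan: in step (a) you claim the slack on $u_j$ and $b_j$ at a point of $F$ is ``available via (A.2)--(A.5).'' It is not; those assumptions say nothing about whether every tight point saturates a path arc. If adjacent nodes were backward/forward independent, the minimum cut would cross the path arc between them and the corresponding perturbation would be infeasible on $F$ --- this is precisely what conditions (\ref{i1:nec6})--(\ref{i1:nec8}) of Theorem \ref{thm:facetnec1} exclude, and the paper invokes them at exactly this spot to guarantee $\bar i_j<u_j$ and $\bar r_j<b_j$ for the output of Algorithm \ref{alg:feasible}. So the necessary conditions are needed in two places in your scheme (step (a) as well as the $\eta$-coupling), not one. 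There is also a small bookkeeping slip in your target identity: with $\pi^x_t=-\eta(c_t-\lambda_{j(t)})^+$ the right-hand side must be $\pi_0=\eta\bigl(d_{1n}-\sum_{t\in S^+}(c_t-\lambda_{j(t)})^+\bigr)+\sum_j\sigma_jd_j$. Neither issue is fatal, but both need to be fixed for the argument to close.
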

\begin{proof}
	Recall that $\text{dim}\big(\text{conv}(\mathcal{P})\big) = 2|E| + n -2$. In this proof, we provide $2|E| + n -2$ affinely independent points that lie on the face $F$
	$$F = \left\{ (\mathbf{x}, \mathbf{y}, \mathbf{i}, \mathbf{r}) \in \mathcal{P}~:~ 	y(S^+) + \sum_{t\in S^+} (c_t - \lambda_j)^+(1-x_t) = d_{1n}  \right\}.$$

First, we provide Algorithm \ref{alg:feasible} which outputs an initial feasible solution $(\bar{\mathbf{x}}, \bar{\mathbf{y}},  \bar{\mathbf{i}}, \bar{\mathbf{r}})$, where all the arcs in $S^+$ have non-zero flow. Let $\bar{d}_j$ be the effective demand on node $j$, that is, the sum of $d_j$ and the minimal amount of flow that needs to be sent from the arcs in $S_j^+$ to ensure $v(S^+) = d_{1n}$. In Algorithm \ref{alg:feasible}, we perform a backward pass and a forward pass on the nodes in $N$. This procedure is carried out to obtain the minimal amounts of flow on the forward and backward path arcs to satisfy the demands. For each node $j\in N$, these minimal outgoing flow values added to the demand $d_j$ give the effective demand $\bar{d}_j$.
	\begin{algorithm} [h]
	\caption{} \label{alg:feasible}
	\begin{algorithmic}
		\STATE \textbf{Initialization:} Let $\bar{d}_j=d_j$ for $j\in N$
		\FOR { $j = (n-1)$ \TO $1$}
		\STATE Let $\Delta = \min \left\{ u_j, \left( \bar{d}_{j+1} - c(S_{j+1}^+) \right)^+  \right\}, $
		\STATE $\bar{d}_j = \bar{d}_j + \Delta$, $\bar{d}_{j+1} = \bar{d}_{j+1} - \Delta,$
		\STATE $\bar{i}_{j} = \Delta.$
		\ENDFOR
		
		\FOR { $j = 2$ \TO $n$ }
		\STATE Let $\Delta = \left( \bar{d}_{j-1} - c(S_{j-1}^+) \right)^+,$
		\STATE $\bar{d}_j = \bar{d}_j + \Delta$, $\bar{d}_{j-1} - \Delta$
		\STATE $\bar{r}_{j-1} = \Delta- \min \{ \Delta , \bar{i}_{j-1} \}$
		\STATE $\bar{i}_{j-1} = \bar{i}_{j-1} - \min \{ \Delta , \bar{i}_{j-1} \}$
		\ENDFOR
		\STATE $\bar{y}_j = \bar{d}_j$, for all $j\in S^+$.
		\STATE $\bar{x}_j = 1$ if $j\in S^+$, $0$ otherwise.
		\STATE $\bar{y}_j = \bar{x}_j = 0$, for all $j\in E^-$.
	\end{algorithmic}
	\end{algorithm}

Algorithm \ref{alg:feasible} ensures that at most one of the path arcs $(j-1,j)$ and $(j,j-1)$ have non-zero flow for all $j\in [2,n]$. Moreover, note that sufficient condition \eqref{i1:suf1} ensures that all the arcs in $S^+$ have nonzero flow. Moreover, for at least one node $i\in N$, it is guaranteed that $c(S_i^+) > \bar{d}_i$. Otherwise, $\rho_t(C) = c_t$ for all $t\in S^+$ which contradicts the necessary condition \eqref{i1:nec1}. Necessary conditions \eqref{i1:nec6} and \eqref{i1:nec6.5} ensure that $\bar{i}_j < u_j$ and $\bar{r}_j < b_j$ for all $j=1,\dots ,n-1$. Let $$e := \arg\max_{i\in N}\{ c(S_i^+) - \bar{d}_i \}$$ be the node with the largest excess capacity. Also let $\mathbf{1}_j$ be the unit vector with $1$ at position $j$.

Next, we give $2|S^+|$ affinely independent points represented by $\bar{w}^t = ( \bar{\mathbf{x}}^t, \bar{\mathbf{y}}^t, \bar{\mathbf{i}}^t, \bar{\mathbf{r}}^t)$ and $\tilde{w}^t = (\tilde{\mathbf{x}}^t, \tilde{\mathbf{y}}^t,  \tilde{\mathbf{i}}^t, \tilde{\mathbf{r}}^t)$ for $t\in S^+$:
\begin{enumerate} [(i)]
	\item Select $\bar{w}^e = (\bar{\mathbf{x}}, \bar{\mathbf{y}},  \bar{\mathbf{i}}, \bar{\mathbf{r}})$ given by Algorithm \ref{alg:feasible}. Let $\varepsilon >0$ be a sufficiently small value. We define $\bar{w}^t$ for $e \neq t\in S^+$ as $\bar{\mathbf{y}}^t = \bar{\mathbf{y}}^e + \varepsilon \mathbf{1}_{e} - \varepsilon \mathbf{1}_t$, $\bar{\mathbf{x}}^t = \bar{\mathbf{x}}^e$. If $t < e$, then $\bar{\mathbf{i}}^t = \bar{\mathbf{i}}^e$ and $\bar{r}_j^t = \bar{r}_j^e$ for $j < t$ and for $t \geq e$, $\bar{r}_j^t = \bar{r}_j^e + \varepsilon$ for $ t\leq j < e$.
	\item In this class of affinely independent solutions, we close the arcs in $S^+$ one at a time and open all the arcs in $E^+\setminus S^+$: $\tilde{\mathbf{x}}^t = \bar{\mathbf{x}} - \mathbf{1}_t + \sum_{j\in E^+\setminus S^+} \mathbf{1}_j$. Next, we send an additional $\bar{y}_t - (c_t - \lambda_j)^+$ amount of flow from the arcs in $S^+\setminus \{t \}$. This is a feasible operation because $v(C\setminus \{t\}) = d_{1n} - (c_t-\lambda_j)^+$. Let $(\mathbf{y}^{*}, \mathbf{i}^{*}, \mathbf{r}^{*})$ be the optimal solution of (F\ref{opt:vdefn}) corresponding to $v(S^+\setminus \{t \})$. Then let, $\tilde{y}_j^t = y_j^{*}$ for $j\in S^+\setminus \{t\}$. Since $v(C\setminus \{t \}) < d_{1n}$, additional flow must be sent through nodes in $E^+ \setminus S^+$ to satisfy flow balance equations \eqref{const:flowbalance}. This is also a feasible operation, because of assumption (A.1). Then, the forward and backward path flows $\tilde{\mathbf{i}}^t$ and $\tilde{\mathbf{r}}^t$ are calculated using the flow balance equations.
\end{enumerate}

In the next set of solutions, we give $2|E^+\setminus S^+|-1$ affinely independent points represented by $\hat{w}^t = (\hat{\mathbf{x}}^t, \hat{\mathbf{y}}^t,  \hat{\mathbf{i}}^t, \hat{\mathbf{r}}^t)$ and $\check{w}^t = (\check{\mathbf{x}}^t, \check{\mathbf{y}}^t,  \check{\mathbf{i}}^t, \check{\mathbf{r}}^t)$ for $t\in E^+\setminus S^+$.

\begin{enumerate}[(i)] \setcounter{enumi}{2}
	\item Starting with solution $\bar{w}^e$, we open arcs in $E^+\setminus S^+$, one by one. $\hat{\mathbf{y}}^t = \bar{\mathbf{y}}^e$, $\hat{\mathbf{x}}^t = \bar{\mathbf{x}}^e + \mathbf{1}_t$, $\hat{\mathbf{i}}^t = \bar{\mathbf{i}}^e$, $\hat{\mathbf{r}}^t = \bar{\mathbf{r}}^e$.
	\item If $|E^+\setminus S^+| \geq 2$, then we can send a sufficiently small $\varepsilon > 0$ amount of flow from arc $t\in E^+\setminus S^+$ to $t\neq k \in E^+\setminus S^+$. Let this set of affinely independent points be represented by $\check{w}^t$ for $t\in E^+\setminus S^+$. While generating $\check{w}^t$, we start with the solution $\tilde{w}^e$, where the non-path arc in $S_e^+$ is closed. The feasibility of this operation is guaranteed by the sufficiency conditions \eqref{i1:suf2} and necessary conditions \eqref{i1:nec6} and \eqref{i1:nec6.5}.
	\begin{enumerate}[(a)]
			\item If $\tilde{y}_t^e = c_t$, then there exists at least one arc $t \neq m \in E^+\setminus S^+$ such that $0 \leq \tilde{y}_m^e < c_m$ due to sufficiency assumption \eqref{i1:suf2}, then for each $t \in E^+\setminus S^+$ such that $\tilde{y}_t^e = c_t$, let $\check{\mathbf{y}}^t = \tilde{\mathbf{y}}^e - \varepsilon \mathbf{1}_t + \varepsilon \mathbf{1}_m$, $\check{\mathbf{x}}^t = \tilde{\mathbf{x}}^e$. If $t < m$, then $\check{\mathbf{i}}^t = \tilde{\mathbf{i}}^e$ and $\check{\mathbf{r}}^t = \tilde{\mathbf{r}}^e + \varepsilon \sum_{i=t}^{m-1} \mathbf{1}_i$. If $t > m$, then $\check{\mathbf{i}}^t = \tilde{\mathbf{i}}^e + \varepsilon \sum_{i=m}^{t-1} \mathbf{1}_i $ and $\check{\mathbf{r}}^t = \tilde{\mathbf{r}}^e$.
			\item If $\tilde{y}_t^e < c_t$ and there exists at least one arc $t\neq m \in E^+\setminus S^+$ such that $\tilde{y}_m^e = 0$, then the same point described in (a) is feasible.
			\item If $\tilde{y}_t^e < c_t$ and there exists at least one arc $t\neq m \in E^+\setminus S^+$ such that $\tilde{y}_m^e = c_m$, then, we send $\varepsilon$ amount of flow from $t$ to $m$, $\check{\mathbf{y}}^t = \tilde{\mathbf{y}}^e + \varepsilon \mathbf{1}_t- \varepsilon \mathbf{1}_m$, $\check{\mathbf{x}}^t = \tilde{\mathbf{x}}^e$. If $t < m$, then $\check{\mathbf{i}}^t = \tilde{\mathbf{i}}^e + \varepsilon \sum_{i=t}^{m-1} \mathbf{1}_i$ and $\check{\mathbf{r}}^t = \tilde{\mathbf{r}}^e$. If $t > m$, then $\check{\mathbf{i}}^t = \tilde{\mathbf{i}}^e $ and $\check{\mathbf{r}}^t = \tilde{\mathbf{r}}^e + \varepsilon \sum_{i=m}^{t-1} \mathbf{1}_i$.
		\end{enumerate}
	\end{enumerate}
Finally, we give $n-1$ points that perturb the flow on the forward path arcs $(j,j+1)$ for $j=1,\dots, n-1$ represented by $\breve{w}^j = (\breve{\mathbf{x}}^j, \breve{\mathbf{y}}^j,  \breve{\mathbf{i}}^j, \breve{\mathbf{r}}^j)$. Let $k = \min \{ i\in N: S_i^+ \neq \emptyset \}$ and $\ell = \max\{ i\in N:~ S_i^+\neq \emptyset \}$. The solution given by Algorithm \ref{alg:feasible} guarantees $\bar{i}_j < u_j$ and $\bar{r}_j < b_j$ for $j=1, \dots, n-1$ due to necessary conditions \eqref{i1:nec6} and \eqref{i1:nec6.5}.
\begin{enumerate}[(i)] \setcounter{enumi}{4}
	\item For $j=1, \dots, n-1$, we send an additional $\varepsilon$ amount of flow from the forward path arc $(j,j+1)$ and the backward path arc $(j+1,j)$. Formally, the solution $\breve{w}^j$ can be obtained by: $\breve{\mathbf{y}}^j = \bar{\mathbf{y}}^e$, $\breve{\mathbf{x}}^j = \bar{\mathbf{x}}^e$, $\breve{\mathbf{i}}^j = \bar{\mathbf{i}}^e + \varepsilon \mathbf{1}_j$ and $\breve{\mathbf{r}}^j = \bar{\mathbf{r}}^e + \varepsilon \mathbf{1}_j$.
\ignore{
	\item For $j=1,\dots ,k-1$, starting with solution $\tilde{w}^k$ we send additional $\varepsilon$ flow from both forward and backward path arcs $(j,j+1)$ and $(j+1,j)$. The solution $\breve{w}^j$ can be obtained by: $\breve{\mathbf{y}}^j = \tilde{\mathbf{y}}^k$, $\breve{\mathbf{x}}^j = \tilde{\mathbf{x}}^k$, $\breve{\mathbf{i}}^j = \tilde{\mathbf{i}}^k + \varepsilon \mathbf{1}_j$ and $\breve{\mathbf{r}}^j = \tilde{\mathbf{r}}^k + \varepsilon \mathbf{1}_j$.

	\item For $j=\ell,\dots ,n$, starting with solution $\tilde{w}^{\ell}$ we send additional $\varepsilon$ flow from both forward and backward path arcs $(j,j+1)$ and $(j+1,j)$. The solution $\breve{w}^j$ can be obtained by: $\breve{\mathbf{y}}^j = \tilde{\mathbf{y}}^{\ell}$, $\breve{\mathbf{x}}^j = \tilde{\mathbf{x}}^{\ell}$, $\breve{\mathbf{i}}^j = \tilde{\mathbf{i}}^{\ell} + \varepsilon \mathbf{1}_j$ and $\breve{\mathbf{r}}^j = \tilde{\mathbf{r}}^{\ell} + \varepsilon \mathbf{1}_j$.}
\end{enumerate}
\end{proof}

Next, we identify conditions under which path pack inequality \eqref{ineq:submod2exp} is facet-defining for conv$(\mathcal{P})$.

\begin{theorem} \label{thm:facetnec2}
	Let $N=[1,n]$, $d_j\geq 0$ for all $j\in N$, let the set $(S^+,S^-)$ be a path pack and $L^-=\emptyset$. The following conditions are necessary for path pack inequality \eqref{ineq:submod2exp} to be facet-defining for $\text{conv}(\mathcal{P})$:
	\begin{enumerate} [(i)]
		\item \label{i2:nec1} $\rho_j(S^+) < c_j$, for all $j\in E^+\setminus S^+$,
		\item \label{i2:nec2} $\max_{t\in S^-} \rho_t(C) > 0$,
	\item  \label{i2:nec3}if a node $j\in [2,n]$ is forward independent for set $(S^+,S^-)$, then node $j-1$ is not backward independent for set $(S^+,S^-)$,
	\item \label{i2:nec4}if a node $j\in [1,n-1]$ is backward independent for set $(S^+,S^-)$, then node $j+1$ is not forward independent for set $(S^+,S^-)$,		
	\item \label{i2:nec5}if $\max_{t\in E_i^+\setminus S_i^+} \rho_t(C)= 0$ and $\max_{t\in S_i^-} \rho_t(C)= 0$ for $i=p,\dots,n$ for some $p\in [2,n]$, then the node $p-1$ is not forward independent for $(S^+,S^-)$,
	\item \label{i2:nec6}if $\max_{t\in E_i^+\setminus S_i^+} \rho_t(C)= 0$ and $\max_{t\in S_i^-} \rho_t(C)= 0$ for $i=1,\dots,q$ for some $q\in [1,n-1]$, then the node $q+1$ is not backward independent for $(S^+,S^-)$.	
	\end{enumerate}
\end{theorem}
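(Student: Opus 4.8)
The plan is to mirror the proof of Theorem \ref{thm:facetnec1}. For each of the six conditions I argue by contradiction: assuming the condition fails, I exhibit \eqref{ineq:submod2exp} as a nonnegative combination of other valid inequalities for $\text{conv}(\mathcal{P})$ --- a path pack (or, more generally, submodular) inequality for a modified objective set, the trivial bounds $0\le y_t\le c_t x_t$, $0\le i_j\le u_j$, $0\le r_j\le b_j$, and the flow-balance equalities \eqref{const:flowbalance} --- none of which is a positive multiple of \eqref{ineq:submod2exp}; since $\text{conv}(\mathcal{P})$ is full-dimensional by (A.1)--(A.2), this precludes \eqref{ineq:submod2exp} from being facet-defining. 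The tools used throughout are submodularity of $v$ (equivalently, monotonicity of the marginals $\rho_t$), Observation \ref{obs:difference} together with the min-cut recursions \eqref{eqn:alphau1}--\eqref{eqn:betad1} (to re-evaluate $\mu_j$ after altering a set), and the separability Lemmas \ref{prop:separability1}--\ref{prop:separability4} with Corollary \ref{cor:independence} and Remarks \ref{revlambdas} and \ref{fwdlambdas}.

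Conditions \eqref{i2:nec1} and \eqref{i2:nec2} are the analogues of cases \eqref{i1:nec1} and \eqref{i1:nec3} of Theorem \ref{thm:facetnec1}. For \eqref{i2:nec1}, if $\rho_{j'}(S^+)\ge c_{j'}$ for some $j'\in E^+\setminus S^+$, then since $\rho_{j'}(C)=\min\{c_{j'},\mu_j\}$ we get $\mu_j\ge c_{j'}$, hence $v(S^+\cup\{j'\})=c(S^+)+c_{j'}$ and $(S^+\cup\{j'\},S^-)$ is again a path pack; the path pack inequality for this enlarged objective set, added to $y_{j'}\le c_{j'}x_{j'}$ and using submodularity to compare the remaining coefficients, dominates \eqref{ineq:submod2exp}. (If $L^-\ne\emptyset$, the analogous condition for $t\in L^-$ reads $\rho_t(C)>-c_t$.) For \eqref{i2:nec2}, if $\rho_t(C)=(c_t-\mu_j)^+=0$ for every $t\in S^-$, then all $(1-x_t)$-terms of \eqref{ineq:submod2exp} vanish and the residual inequality follows from summing the flow-balance equalities over $j\in N$ --- which telescopes the path-arc flows --- together with the bounds $y_t\le c_t$ for $t\in S^-$ and $y_t\ge 0$ for $t\in E^+\setminus S^+$.

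Conditions \eqref{i2:nec3}--\eqref{i2:nec6} carry the substance of the proof and parallel cases \eqref{i1:nec6}--\eqref{i1:nec8} of Theorem \ref{thm:facetnec1}. If \eqref{i2:nec3} or \eqref{i2:nec4} fails, so that a forward-independent node and an adjacent backward-independent node occur, then exactly one of the four combined equality patterns among the recursions \eqref{eqn:alphau1}--\eqref{eqn:betad1} holds, and the matching separability lemma among Lemmas \ref{prop:separability1}--\ref{prop:separability4} splits $N$ into $N_1=[1,j-1]$ and $N_2=[j,n]$ with $v(S^+,L^-)=v_1+v_2$; Remarks \ref{revlambdas} and \ref{fwdlambdas} and Corollary \ref{cor:independence} then guarantee that the values $\mu_i$, $i\in N$, are unchanged by the split. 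I would write the submodular inequality \eqref{ineq:submod2} for $N_1$ and for $N_2$, whose constants $v_1(C_1)$ and $v_2(C_2)$ sum to $v(C)=c(S^+)$, observe that the two inequalities share only the boundary path-arc variables $i_{j-1},r_{j-1}$ and therefore telescope, and check --- via Corollary \ref{cor:independence} --- that their non-boundary coefficients are exactly the $\min\{c_t,\mu_i\}$ and $(c_t-\mu_i)^+$ coefficients of \eqref{ineq:submod2exp}; their sum then equals \eqref{ineq:submod2exp}. For \eqref{i2:nec5} and \eqref{i2:nec6} the split is performed at $p$ (respectively $q+1$) using Lemma \ref{prop:separability1} or \ref{prop:separability2} according to which defining equality of forward (respectively backward) independence holds, and the estimates $-b_{p-1}\le\alpha_p^u-\alpha_p^d+c(S_p^+)-d_p-c(S_p^-)\le u_{p-1}$ coming from \eqref{eqn:alphau1}--\eqref{eqn:betad1} show that the $\mu$-values of the appropriate subproblem dominate the original ones, so the vanishing of the $x$-coefficients on nodes $i\ge p$ (respectively $i\le q$) is preserved; summing the two pieces reproduces \eqref{ineq:submod2exp}.

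I expect the bookkeeping in \eqref{i2:nec3}--\eqref{i2:nec6} to be the main obstacle. Unlike the path cover case, the right-hand side of a path pack inequality is the constant $c(S^+)$ rather than a value-function term, and in the separability lemmas that route the boundary path arc into an $S^+_{N_i}$ set (Lemmas \ref{prop:separability1}, \ref{prop:separability3}, \ref{prop:separability4}) the capacities $u_{j-1},b_{j-1}$ enter $c(S^+_{N_i})$, so $(S^+_{N_i},S^-_{N_i})$ need not itself be a path pack for $N_i$. This is why I would invoke the general submodular inequality \eqref{ineq:submod2} for the subproblems --- whose constant is $v_i(C_i)$, with $v_1(C_1)+v_2(C_2)=v(C)$ --- rather than \eqref{ineq:submod2exp} directly, and carefully match coefficients through Corollary \ref{cor:independence}. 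Verifying each of the four orientations in \eqref{i2:nec3}--\eqref{i2:nec4} and the two orientations in \eqref{i2:nec5}--\eqref{i2:nec6} against the correct separability lemma, and confirming that the vanishing $x$-coefficients behave monotonically under the split, is the lengthy but routine remainder.
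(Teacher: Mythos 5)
Your overall architecture is right, and for conditions \eqref{i2:nec3}--\eqref{i2:nec6} your plan matches the paper's proof: split the path with the appropriate separability lemma, write the general submodular inequality \eqref{ineq:submod2} for each subpath (precisely because the boundary path arc absorbed into $S^+_{N_i}$ may destroy the path-pack property), use $v_1(C_1)+v_2(C_2)=v(C)$ together with Corollary \ref{cor:independence} to match coefficients, and invoke the $\alpha$/$\beta$ estimates for \eqref{i2:nec5}--\eqref{i2:nec6}. Two small inaccuracies there: in the mixed cases the subpath inequalities carry extra nonnegative terms $k_t(1-x_t)$ for $t\in S_{N_i}^+$, so their sum is only \emph{at least as strong as} \eqref{ineq:submod2exp}, not equal to it; and condition \eqref{i2:nec6} is handled with Lemmas \ref{prop:separability3}--\ref{prop:separability4}, not \ref{prop:separability1}--\ref{prop:separability2}.

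Your certificates for \eqref{i2:nec1} and \eqref{i2:nec2}, however, do not work. For \eqref{i2:nec1}: the path pack inequality for the enlarged objective set $S^+\cup\{j'\}$ reads $y(S^+)+y_{j'}+\cdots\le c(S^+)+c_{j'}+\cdots$, while the target has $y_{j'}-c_{j'}x_{j'}$ on the left and only $c(S^+)$ on the right; passing from the former to the latter requires adding $-c_{j'}x_{j'}\le -c_{j'}$, which is not a valid inequality (it holds only when $x_{j'}=1$), and adding $y_{j'}\le c_{j'}x_{j'}$ instead merely doubles $y_{j'}$. So the proposed combination does not yield \eqref{ineq:submod2exp}. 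The paper's route is different: remove $j'$ from the \emph{coefficient} set rather than adding it to the objective set, i.e.\ set $a_{j'}=0$ in (F\ref{opt:vdefn}), which deletes both $y_{j'}$ and $x_{j'}$ from the inequality while leaving every other coefficient unchanged (since $j'$ never enters an objective set, the relevant values of $v$ are unaffected), and then add $y_{j'}\le c_{j'}x_{j'}$. For \eqref{i2:nec2}: telescoping the flow-balance equalities gives $y(E^+)=d_{1n}+y(E^-\setminus S^-)+y(S^-)\le d_{1n}+c(S^-)+y(E^-\setminus S^-)$, but the right-hand side of \eqref{ineq:submod2exp} is $c(S^+)+y(E^-\setminus S^-)$ and a path pack satisfies $c(S^+)=v(C)\le m_j^d\le d_{1n}+c(S^-)$ --- the comparison you need points the wrong way (this argument is the one for condition \eqref{i1:nec3} of Theorem \ref{thm:facetnec1}, where the right-hand constant really is $d_{1n}+c(S^-)$, and it does not transfer to the pack case). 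The paper instead moves a zero-marginal arc $j$ from $S^-$ to $L^-$: since $\rho_j(S^+)=0$ the value $v$ is unchanged, and submodularity makes every remaining coefficient no larger, so the resulting inequality dominates \eqref{ineq:submod2exp}.
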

\begin{proof}
	\begin{enumerate} [(i)]
		\item Suppose that for some $k\in E^+\setminus S^+$, $\rho_k(S^+) = c_k$. Then, recall the implicit form of path pack inequality \eqref{ineq:submod2exp} is
		\begin{align*}
			y(E^+\setminus \{k\}) + y_k + \sum_{t\in S^-} \rho_t(S^+)(1-x_t) \leq v(S^+) + \sum_{k\neq t\in E^+\setminus S^+} \rho_t(S^+)x_t + c_kx_k + y(E^-\setminus S^-).
		\end{align*}
		Now, if we select $a_k=0$ in (F\ref{opt:vdefn}), then the coefficients of $x_k$ and $y_k$ become zero and summing the path cover inequality
		\begin{align*}
			y(E^+\setminus \{k\}) + \sum_{t\in S^-} \rho_t(S^+)(1-x_t) \leq v(S^+) + \sum_{k\neq t\in E^+\setminus S^+} \rho_t(S^+)x_t + y(E^-\setminus S^-).
		\end{align*}		
		with $y_k \leq c_k x_k$ gives the first path cover inequality.
		\item Suppose that $\rho_j (S^+)=0$ for all $j\in S^-$. Then the path pack inequality is
		\begin{align*}
			y(E^+) \leq v(S^+) + \sum_{t\in E^+\setminus S^+} \rho_t(S^+)x_t +y\left(E^-\setminus (L^-\cup S^-)\right),
		\end{align*}
where $L^-=\emptyset$. If an arc $j$ is dropped from $S^-$ and added to $L^-$, then $v(S^+) = v(S^+\cup \{j\})$ since $\rho_j(S^+)=0$ for $j\in S^-$. Consequently, the path pack inequality with $S^- = S^- \setminus \{ j\}$ and $L^- = \{j\}$
		\begin{align*}
	y(E^+) +\sum_{t\in S^-} \rho_t(S^+\cup \{j\})(1-x_t) & \leq v(S^+)+ \sum_{t\in E^+\setminus S^+} \rho_t(S^+\cup \{j\})x_t+y\left(E^-\setminus (L^-\cup S^-)\right).
		\end{align*}
	But since $0\leq \rho_t(S^+\cup \{j\}) \leq \rho_t(S^+)$ from submodularity of $v$ and $\rho_t(S^+)=0$ for all $t\in S^-$, we observe that the path pack inequality above reduces to
	\begin{align*}
	y(E^+) \leq v(S^+)+ \sum_{t\in E^+\setminus S^+} \rho_t(S^+\cup \{j\})x_t+y\left(E^-\setminus (L^-\cup S^-)\right)
	\end{align*}
	and it is at least as strong as the first pack inequality for $S^+$, $S^-$ and $L^-= \emptyset$.
	\end{enumerate}
	\begin{itemize}
	\item[\eqref{i2:nec3}--\eqref{i2:nec4}] We repeat the same argument of the proof of condition \eqref{i1:nec6} of Theorem \ref{thm:facetnec1}. Suppose a node $j$ is forward independent for $(S^+,S^-)$ and the node $j-1$ is backward independent for $(S^+,S^-)$ for some $j\in [2,n]$. Lemmas \ref{prop:separability1}--\ref{prop:separability4} show that the nodes $N$ and the arcs $C=S^+\cup L^-$ can be partitioned into $N_1= [1,j-1]$, $N_2=[j,n]$ and $C_1$, $C_2$ such that the sum of the minimum cut values for $N_1$, $N_2$ is equal to the minimum cut for $N$. From Remarks \ref{revlambdas} and \ref{fwdlambdas} and Corollary \ref{cor:independence}, it is easy to see that $\mu_i$ for $i\in N$ will not change by the partition procedures described in Lemmas \ref{prop:separability1}--\ref{prop:separability4}. We examine the four cases for node $j-1$ to be forward independent and node $j$ to be backward independent for the set $(S^+,S^-)$. For ease of notation, let $$Q_{jk}^+ := \sum_{i=j}^k \sum_{t\in E_i^+\setminus S_i^+} (y_t - \min \{\mu_i, c_t\}x_t)$$ and $$Q_{jk}^- := \sum_{i=j}^k \sum_{t\in S_i^-} (c_t-\mu_i)^+(1-x_t)$$ for $j\leq k$ and $j\in N$, $k\in N$ (and zero if $j>k$), where the values $\mu_i$ are the coefficients that appear in the path pack inequality \eqref{ineq:submod2exp}. As a result, the path pack inequality can be written as
	\begin{align}
		y(S^+) + Q_{1n}^+ \leq v(C) + Q_{1n}^- + y(E^-\setminus S^-). \label{pack0}
	\end{align}
\begin{enumerate}[(a)]
	\item Suppose $\alpha_j^u = \alpha_{j-1}^d+u_{j-1}+c(S_j^+)$ and $\beta_{j-1}^u = \beta_{j}^d + b_{j-1} +c(S_{j-1}^+)$. Consider the partition procedure described in Lemma \ref{prop:separability1}, where $S_{N1}^+ = (j,j-1)\cup S_{1j-1}^+$, $S_{N2}^+ = (j-1,j)\cup S_{jn}^+$, $S_{N1}^- = S_{1j-1}^-$, $S_{N2}^- = S_{jn}^-$. Then, the path pack inequalities for nodes $N_1$ is
\begin{align}
	r_{j-1} + y(S_{1j-1}^+) + Q_{1j-1}^+ \leq v_1(C_1) + Q_{1j-1}^- + y(E_{1j-1}^-\setminus S_{1j-1}^-)f + i_{j-1}. \label{pack3}
\end{align}
	Similarly, the path pack inequality for $N_2$ is
	\begin{align}
	i_{j-1} + y(S_{jn}^+) + Q_{jn}^+ \leq v_2(C_2) + Q_{jn}^- + y(E_{jn}^-\setminus S_{jn}^-) + r_{j-1}. \label{pack4}
\end{align}
	Inequalities \eqref{pack3}--\eqref{pack4} summed gives the path pack inequality \eqref{pack0}.
	\item Suppose $\alpha_j^d = \alpha_{j-1}^u+b_{j-1}+d_{j}+c(S_j^-)$ and $\beta_{j-1}^d = \beta_{j}^u + u_{j-1}+ d_{j-1}+c(S_{j-1}^-)$. Consider the partition described in Lemma \ref{prop:separability2}, where $S_{N1}^+ = S_{1j-1}^+$, $S_{N2}^+ =  S_{jn}^+$, $S_{N1}^- = (j-1,j)\cup S_{1j-1}^-$, $S_{N2}^- = (j,j-1) \cup S_{jn}^-$. The submodular inequality \eqref{ineq:submod2} for nodes $N_1$ where the objective coefficients of (F\ref{opt:vdefn}) are selected as $a_t=1$ for $t\in E_{1j-1}^+$, $a_t=0$ for $t=(j,j-1)$, $a_t = -1$ for $t\in E_{N1}^-\setminus S_{N1}^-$ and $a_t = 0$ for $t\in S_{N1}^-$ is
\begin{align}
	y(S_{1j-1}^+) + \sum_{t\in S_{N1}^+}k_t(1-x_t) + Q_{1j-1}^+ \leq v_1(C_1) - Q_{1j-1}^- + y(E_{1j-1}^-\setminus S_{1j-1}^-), \label{pack1}
\end{align}
where $k_t$ for $t\in S_{N1}^+$ are some nonnegative coefficients. Similarly, the submodular inequality \eqref{ineq:submod2} for nodes $N_2$, where the objective coefficients of (F\ref{opt:vdefn}) are selected as $a_t=1$ for $t\in E_{jn}^+$, $a_t=0$ for $t=(j-1,j)$, $a_t = -1$ for $t\in E_{N2}^-\setminus S_{N2}^-$ and $a_t = 0$ for $t\in S_{N2}^-$ is
\begin{align}
	y(S_{jn}^+) + \sum_{t\in S_{N2}^+}k_t(1-x_t) + Q_{jn}^+ \leq v_2(C_2) - Q_{jn}^- +  y(E_{jn}^-\setminus S_{jn}^-), \label{pack2}
\end{align}
where $k_t$ for $t\in S_{N2}^+$ are some nonnegative coefficients. The sum of inequalities \eqref{pack1}--\eqref{pack2} is at least as strong as the path pack inequality \eqref{pack0}.

	\item Suppose $\alpha_j^u = \alpha_{j-1}^d+u_{j-1}+c(S_j^+)$ and $\beta_{j-1}^d = \beta_{j}^u + u_{j-1}+ d_{j-1}+c(S_{j-1}^-)$. Consider the partition described in Lemma \ref{prop:separability3}, where $S_{N1}^+ = S_{1j-1}^+$, $S_{N2}^+ = (j-1,j)\cup S_{jn}^+$, $S_{N1}^- = S_{1j-1}^-$, $S_{N2}^- = (j,j-1) \cup S_{jn}^-$. The submodular inequality \eqref{ineq:submod2} for nodes $N_1$ where the objective coefficients of (F\ref{opt:vdefn}) are selected as $a_t=1$ for $t\in E_{1j-1}^+$, $a_t=0$ for $t=(j,j-1)$, $a_t = -1$ for $t\in E_{N1}^-\setminus S_{N1}^-$ and $a_t = 0$ for $t\in S_{N1}^-$ is
\begin{align}
	y(S_{1j-1}^+) + \sum_{t\in S_{N1}^+}k_t(1-x_t) + Q_{1j-1}^+ \leq v_1(C_1) - Q_{1j-1}^- + y(E_{1j-1}^-\setminus S_{1j-1}^-) + i_{j-1}, \label{pack5}
\end{align}
where $k_t$ for $t\in S_{N1}^+$ are some nonnegative coefficients. The path pack inequality for $N_2$ is
	\begin{align}
		i_{j-1}+ y(S_{jn}^+) + Q_{jn}^+ \leq v_2(C_2) + Q_{jn}^- + y(E_{jn}^-\setminus S_{jn}^-). \label{pack6}
	\end{align}
	The sum of inequalities \eqref{pack5}--\eqref{pack6} is at least as strong as inequality \eqref{pack0}.
	\item Suppose $\alpha_j^d = \alpha_{j-1}^u+b_{j-1}+d_j + c(S_j^-)$ and $\beta_{j-1}^u = \beta_{j}^d +b_{j-1}+ c(S_{j-1}^+)$. Consider the partition described in Lemma \ref{prop:separability4}, where $S_{N1}^+ = (j,j-1) \cup S_{1j-1}^+$, $S_{N2}^+ =S_{jn}^+$, $S_{N1}^- = (j-1,j)\cup S_{1j-1}^-$, $S_{N2}^- = S_{jn}^-$. The path pack inequalities for nodes $N_1$ is
	\begin{align}
		r_{j-1}+ y(S_{1j-1}^+) + Q_{1j-1}^+ \leq v_1(C_1) + Q_{1j-1}^- + y(E_{1j-1}^-\setminus S_{1j-1}^-). \label{pack7}
	\end{align}
	The submodular inequality \eqref{ineq:submod2} for nodes $N_2$ where the objective coefficients of (F\ref{opt:vdefn}) are selected as $a_t=1$ for $t\in E_{jn}^+$, $a_t=0$ for $t=(j-1,j)$, $a_t = -1$ for $t\in E_{N2}^-\setminus S_{N2}^-$ and $a_t = 0$ for $t\in S_{N2}^-$ is
\begin{align}
	y(S_{jn}^+) + \sum_{t\in S_{N2}^+}k_t(1-x_t) + Q_{jn}^+ \leq v_2(C_2) - Q_{jn}^- + y(E_{jn}^-\setminus S_{jn}^-) + r_{j-1}, \label{pack8}
\end{align}
where $k_t$ for $t\in S_{N2}^+$ are some nonnegative coefficients. The sum of inequalities \eqref{pack7}--\eqref{pack8} is at least as strong as the path pack inequality \eqref{pack0}.
\end{enumerate}
	\item[\eqref{i2:nec5}] Suppose $(c_t-\mu_i)^+ = 0$ for all $t\in S_i^-$ and $i\in [p,n]$ and the node $p-1$ is forward independent. Then, we partition the node set $N=[1,n]$ into $N_1 = [1,p-1]$ and $N_2=[p,n]$. We follow Lemma \ref{prop:separability1} if $\beta_{p-1}^u=\beta_p^d+b_{p-1}+c(S_{p-1}^+)$ and follow Lemma \ref{prop:separability2} if $\beta_{p-1}^d = \beta_p^u +u_{p-1}+d_{p-1} + c(S_{p-1}^-)$ to define $S_{N1}^+$, $S_{N1}^-$, $S_{N2}^+$ and $S_{N2}^-$. Remark \ref{fwdlambdas} along with the partition procedure described in Lemma \ref{prop:separability1} or \ref{prop:separability2} implies that $\mu_i$ will remain unchanged for $i\in N_1$.
	
	If $\beta_{p-1}^u=\beta_{j}^d+b_{p-1}+c(S_{p-1}^+)$, then the coefficients $\mu_i$ of the path pack inequality for nodes $N_1$ and arcs $S_{N1}^+$, $S_{N1}^-$ described in Lemma \ref{prop:separability1} is the same as the coefficients of the path pack inequality for nodes $N$ and arcs $S^+$, $S^-$. Moreover, let $\bar{m}_p^u$ and $\bar{m}_p^d$ be the values of minimum cut that goes above and below node $p$ for the node set $N_2$ and arcs $S_{N2}^+$, $S_{N2}^-$ and observe that $$\bar{m}_p^u = \beta_p^u + u_{p-1}\text{ and }\bar{m}_p^d = \beta_p^d.$$ Then, comparing the difference $\bar{\mu}_p:= (\bar{m}_p^d - \bar{m}_p^u)^+ =(\beta_p^d -\beta_p^u - u_{p-1})^+$ to $\mu_p = (m_p^d-m_p^u)^+ = (\beta_p^d -\beta_p^u + \alpha_p^d -\alpha_p^u - c(S_p^+)+d_p+c(S_p^-))^+ $, we observe that $\bar{\mu}_p \geq \mu_p$ since $\alpha_p^d-\alpha_p^u - c(S_p^+)+d_p+c(S_p^-) \geq -u_{p-1}$ from \eqref{eqn:alphau1}--\eqref{eqn:alphad1}. Since $(c_t-\mu_p)^+=0$, then $(c_t-\bar{\mu}_p)^+ = 0$ as well. Using the same technique, it is easy to observe that $\bar{\mu}_i \geq \mu_i$ for $i\in [p+1,n]$ as well. As a result, the path pack inequality for $N_2$ with sets $S_{N2}^+$, $S_{N2}^-$, summed with the path pack inequality for nodes $N_1$ and arcs $S_{N1}^+$, $S_{N1}^-$ give the path pack inequality for nodes $N$ and arc $S^+$, $S^-$.

Similarly, if $\beta_{p-1}^d=\beta_{j}^u+u_{p-1}+d_{p-1}+c(S_{p-1}^-)$, the proof follows very similar to the previous argument using Lemma \ref{prop:separability2}. Letting $\bar{m}_p^u$ and $\bar{m}_p^d$ be the values of minimum cut that goes above and below node $p$ for the node set $N_2$ and arcs $S_{N2}^+$, $S_{N2}^-$, we get $$\bar{m}_p^u = \beta_p^u\text{ and }\bar{m}_p^d + b_{p-1} = \beta_p^d$$ under this case. Now, notice that that $\alpha_p^d-\alpha_p^u - c(S_p^+)+d_p+c(S_p^-) \leq b_{p-1}$ from \eqref{eqn:alphau1}--\eqref{eqn:alphad1}, which leads to $\bar{\mu}_p \geq \mu_p$. Then the proof follows same as above.

	\item[\eqref{i2:nec6}] The proof is similar to that of the necessary condition \eqref{i2:nec5} above. We use Lemmas \ref{prop:separability3}--\ref{prop:separability4} and Remark \ref{fwdlambdas} to partition the node set $N$ and arcs $S^+$, $S^-$ into node sets $N_1=[1,q]$ and $N_2=[q+1,n]$ and arcs $S_{N1}^+$, $S_{N1}^-$ and $S_{N2}^+$, $S_{N2}^-$. Next, we check the values of minimum cut that goes above and below node $q$ for the node set $N_1$ and arcs $S_{N1}^+$, $S_{N1}^-$. Then, observing $-b_{q}\leq \beta_{q}^d-\alpha_{q}^u - c(S_q^+)+d_q+c(S_q^-) \leq u_{q}$ from \eqref{eqn:betau1}--\eqref{eqn:betad1}, it is easy to see that the coefficients of $x_t$ for $t\in S_{N1}^-$ and $t\in E_{N1}^+\setminus S_{N1}^+$ are equal to zero in the path pack inequality for node set $N_1$. As a result, the path pack inequalities for $N_1$, $S_{N1}^+$, $S_{N1}^-$ and for $N_2$, $S_{N2}^+$, $S_{N2}^-$ summed gives the path pack inequality for $N$, $S^+$, $S^-$.
	\end{itemize}
\end{proof}
\textcolor{red}{\begin{remark}
	If the node set $N$ consists of a single node, then the conditions \eqref{i2:nec1} and \eqref{i2:nec2} of Theorem \ref{thm:facetnec2} reduce to the necessary and sufficient facet conditions of flow pack inequalities given in \cite[Proposition 1]{A:fp}. In this setting, conditions \eqref{i2:nec3}--\eqref{i2:nec6} are no longer relevant. 
\end{remark}}
\begin{theorem}
Let $N=[1,n]$, $E^- = \emptyset$, $d_j > 0$ and $|E_j^+|= 1$, for all $j\in N$ and let the objective set $S^+$ be a path pack for $N$. The necessary conditions in Theorem \ref{thm:facetnec2} along with \begin{enumerate} [(i)]
\item \label{i2:suf1}for each $j\in E^+\setminus S^+$, either $S^+ \cup \{j\}$ is a path cover for $N$ or $\rho_j(S^+) = 0$,
\item \label{i2:suf2}for each $t\in S^+$, there exists $j_t\in E^+\setminus S^+$ such that $S^+\setminus\{t\}\cup \{j_t\}$ is a path cover for $N$,
\item \label{i2:suf5}for each $j \in [1,n-1]$, there exists $k_j \in E^+\setminus S^+$ such that the set $S^+\cup \{k_j\}$ is a path cover and neither node $j$ is backward independent nor node $j+1$ is forward independent for the set $S^+\cup \{k_j\}$
\end{enumerate}
are sufficient for path pack inequality \eqref{ineq:submod2exp} to be facet-defining for $\text{conv}(\mathcal{P})$.

\end{theorem}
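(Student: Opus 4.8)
The plan is to follow the template of the preceding sufficiency theorem for path cover inequalities. Since assumptions (A.1)--(A.2) give $\text{dim}\big(\text{conv}(\mathcal{P})\big)=2|E|+n-2=3n-2$ (here $|E|=|E^+|=n$), it suffices to exhibit $3n-2$ affinely independent feasible points on the face $F$ cut out by path pack inequality \eqref{ineq:submod2exp}. The first step is to simplify $F$. Because $E^-=\emptyset$, summing the flow-balance equalities \eqref{const:flowbalance} over $j\in N$ gives $y(E^+)=d_{1n}$ on all of $\mathcal{P}$; substituting this into the equality version of \eqref{ineq:submod2exp} and using $\rho_t(S^+)=\min\{c_t,\mu_{j(t)}\}$ (where $j(t)$ is the head of $t$) shows $F=\{(\mathbf{x},\mathbf{y},\mathbf{i},\mathbf{r})\in\mathcal{P}:\sum_{t\in E^+\setminus S^+}\rho_t(S^+)\,x_t=\delta\}$, with $\delta:=d_{1n}-c(S^+)$. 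Condition \eqref{i2:suf1} forces every coefficient $\rho_t(S^+)$ to be $0$ or $\delta$, so one writes $E^+\setminus S^+=J_{\mathrm{cov}}\cup J_0$, where $J_{\mathrm{cov}}=\{t:S^+\cup\{t\}\text{ is a path cover for }N\}$ (coefficient $\delta$) and $J_0$ is the rest (coefficient $0$, i.e.\ $\mu_{j(t)}=0$). The necessary condition \eqref{i2:nec1} yields $c_t>\delta$ for $t\in J_{\mathrm{cov}}$, and one may assume $\delta>0$, since otherwise \eqref{ineq:submod2exp} is the flow-balance identity $y(E^+)=d_{1n}$ written as an inequality and is not facet-defining. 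Hence $F$ is exactly the set of feasible points in which precisely one arc of $J_{\mathrm{cov}}$ is open.

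Next I would build a base point $\bar{w}\in F$: fix $k_0\in J_{\mathrm{cov}}$, open $S^+\cup\{k_0\}$, set $\bar{y}_t=c_t$ for $t\in S^+$ and $\bar{y}_{k_0}=\delta$, and route the resulting $d_{1n}$ units through the path arcs. Such a routing exists because $S^+$ being a path pack gives $v(S^+)=c(S^+)$ (so all of $S^+$ can be saturated simultaneously), $S^+\cup\{k_0\}$ being a path cover gives $v(S^+\cup\{k_0\})=d_{1n}$, and $\rho_{k_0}(S^+)=\delta$, so a max flow saturating $S^+$ can be augmented by exactly $\delta$ along a residual path reaching the head of $k_0$. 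A refinement needed below is that, for each $j\in[1,n-1]$, one can choose a routing (built from the path cover $S^+\cup\{k_j\}$ furnished by \eqref{i2:suf5}) with $\bar{i}_j<u_j$ and $\bar{r}_j<b_j$ strictly; this is where the ``neither node $j$ backward independent nor node $j+1$ forward independent'' clauses of \eqref{i2:suf5}, together with the necessary conditions \eqref{i2:nec3}--\eqref{i2:nec4}, are invoked through the contrapositive of the separability Lemmas \ref{prop:separability1}--\ref{prop:separability4}.

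With $\bar{w}$ in hand I would produce $3n-2$ affinely independent points in three groups, arranged so each group perturbs coordinates the others leave fixed. \textbf{(1)} An $S^+$-group of $2|S^+|$ points: for each $t\in S^+$, move an $\varepsilon>0$ of flow off $t$ onto $k_0$ (feasible because $\bar{y}_{k_0}=\delta<c_{k_0}$ by \eqref{i2:nec1}); and, separately, close $t$ while opening the arc $j_t$ of \eqref{i2:suf2} --- since $S^+\setminus\{t\}\cup\{j_t\}$ is a path cover, so is $S^+\cup\{j_t\}$, hence $j_t\in J_{\mathrm{cov}}$, the open cover arc switches to $j_t$ so the point stays on $F$, and $d_{1n}$ can still be routed on $S^+\setminus\{t\}\cup\{j_t\}$. \textbf{(2)} An $(E^+\setminus S^+)$-group of $2|E^+\setminus S^+|-1$ points: open each zero arc $t\in J_0$ with zero flow, open each with a small positive flow (compensating on $k_0$), and switch the open cover arc from $k_0$ to each $k\in J_{\mathrm{cov}}\setminus\{k_0\}$ (rerouting $d_{1n}$ on the path cover $S^+\cup\{k\}$), arranged so exactly one linear dependency occurs among these points, as in the path cover proof. \textbf{(3)} A path-arc group of $n-1$ points: for each $j\in[1,n-1]$, starting from the routing built on $S^+\cup\{k_j\}$, push an $\varepsilon>0$ of flow simultaneously around the forward arc $(j,j+1)$ and the backward arc $(j+1,j)$ --- feasible by the strict slack $\bar{i}_j<u_j$, $\bar{r}_j<b_j$, and automatically on $F$. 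The sizes total $2|S^+|+\big(2|E^+\setminus S^+|-1\big)+(n-1)=3n-2$, and affine independence follows from the block-triangular structure ($S^+$-flow moves, $S^+$ on/off toggles, $E^+\setminus S^+$ moves/toggles, path-arc moves). When $N$ is a single node this reduces to the flow pack facet argument.

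The main obstacle is the refinement in the second step: showing the base routing and the $n-1$ routings for the path-arc group can be taken with all path-arc flows \emph{strictly} interior to $[0,u_j]$ and $[0,b_j]$. This requires converting the independence hypotheses in \eqref{i2:suf5} and \eqref{i2:nec3}--\eqref{i2:nec4} into the statement that, for the relevant path cover $S^+\cup\{k_j\}$, no minimum cut saturates the path arcs incident to node $j$, so a feasible flow with slack on $(j,j+1)$ and $(j+1,j)$ exists --- essentially the converse of Lemmas \ref{prop:separability1}--\ref{prop:separability4}, which are stated here only in the ``independence $\Rightarrow$ decomposition'' direction (the same role played by \eqref{i1:nec6}--\eqref{i1:nec6.5} in the path cover sufficiency proof). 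A secondary difficulty is the precise bookkeeping of the $(E^+\setminus S^+)$-group so that the switch-and-reroute moves together with the zero-arc moves yield exactly $2|E^+\setminus S^+|-1$ affinely independent points with the single anticipated dependency, and checking that \eqref{i2:suf2} and \eqref{i2:suf5} supply path covers large enough to absorb every rerouting the construction requires.
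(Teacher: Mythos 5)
Your proposal follows essentially the same route as the paper's proof: exhibiting $2|E|+n-2$ affinely independent points on the face via a base solution that saturates $S^+$ and routes the remainder through a cover-completing arc, flow-shift and arc-toggle perturbations justified by conditions \eqref{i2:suf1}--\eqref{i2:suf2}, and $n-1$ simultaneous forward/backward path-arc circulations whose feasibility rests on the strict slack guaranteed by condition \eqref{i2:suf5} together with the independence-based necessary conditions. Your reformulation of the face as ``exactly one arc of $J_{\mathrm{cov}}$ open'' (using $y(E^+)=d_{1n}$) and the reversed allocation of the single affine dependency between the $S^+$ and $E^+\setminus S^+$ groups are cosmetic variations; the obstacles you flag are exactly the points the paper's proof also treats only by assertion.
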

\begin{proof}
	We provide $2|E| + n -2$ affinely independent points that lie on the face:
	\begin{align*}
		F = \left\{ (\mathbf{x}, \mathbf{y}, \mathbf{i}, \mathbf{r}) \in \mathcal{P}~:~ 	y(S^+) + \sum_{t\in E^+\setminus S^+} \left(y_t - \min \{ \mu_j, c_t \}x_t \right) = c(S^+)  \right\}.
	\end{align*}
	Let $(\mathbf{y}^*, \mathbf{i}^*, \mathbf{r}^*) \in \mathcal{Q} $ be an optimal solution to (F\ref{opt:vdefn}). Since $S^+$ is a path pack and $E^-=\emptyset$, $v(S^+)= c(S^+)$. Then, notice that $y_t^* = c_t$ for all $t\in S^+$. Moreover, let $e$ be the arc with largest capacity in $S^+$, $\varepsilon > 0$ be a sufficiently small value and $\mathbf{1}_j$ be the unit vector with $1$ at position $j$. First, we give 2$|E^+\setminus S^+|$ affinely independent points represented by $\bar{z}^t = (\bar{\mathbf{x}}^t,\bar{\mathbf{y}}^t,  \bar{\mathbf{i}}^t, \bar{\mathbf{r}}^t)$ and $\tilde{z}^t = (\tilde{\mathbf{x}}^t, \tilde{\mathbf{y}}^t,  \tilde{\mathbf{i}}^t, \tilde{\mathbf{r}}^t)$ for $t\in E^+\setminus S^+$.
	\begin{enumerate} [(i)]
		\item Let $t\in E^+\setminus S^+$, where $S^+\cup \{t\}$ is a path cover for $N$. The solution $\bar{z}^t$ has arcs in $S^+\cup \{t\}$ open, $\bar{x}_j^t = 1$ for $j\in S^+\cup \{t\}$, $0$ otherwise, $\bar{y}_j^t = y_j^*$ for $j\in S^+$ and $\bar{y}_t^t = \rho_t(S^+)$, $0$ otherwise. The forward and backward path arc flow values $\bar{i}_j^t$ and $\bar{r}_j^t$ can then be calculated using flow balance equalities \eqref{const:flowbalance} where at most one of them can be nonzero for each $j\in N$. Sufficiency condition \eqref{i2:nec1} guarantees the feasibility of $\bar{z}^t$.
		\item Let $t\in E^+\setminus S^+$, where $\rho_t(S^+) = 0$ and let $t\neq \ell \in E^+\setminus S^+$, where $S^+\cup \{\ell\}$ is a path cover for $N$. The solution $\bar{z}^t$ has arcs in $S^+\cup \{t,\ell\}$ open, $\bar{x}_j^t = 1$ for $j\in S^+\cup \{t,\ell\}$, and $0$ otherwise, $\bar{y}_j^t = y_j^*$ for $j\in S^+$, $\bar{y}_t^t = 0$, $\bar{y}_{\ell}^t = \rho_{\ell}(S^+)$, and $0$ otherwise. The forward and backward path arc flow values $\bar{i}_j^t$ and $\bar{r}_j^t$ can then be calculated using flow balance equalities \eqref{const:flowbalance} where at most one of them can be nonzero for each $j\in N$. Sufficiency condition \eqref{i2:nec1} guarantees the feasibility of $\bar{z}^t$.

		\item The necessary condition \eqref{i2:nec1} ensures that $\rho_t(S^+) < c_t$, therefore $\bar{y}_t^t < c_t$. In solution $\tilde{z}^t$, starting with $\bar{z}^t$, we send a flow of $\varepsilon$ from arc $t\in E^+\setminus S^+$ to $e\in S^+$. Let $\tilde{\mathbf{y}}^t = \bar{\mathbf{y}}^t + \varepsilon\mathbf{1}_t - \varepsilon\mathbf{1}_e$ and $\tilde{\mathbf{x}}^t = \bar{\mathbf{x}}^t$. If $e < t$, then $\tilde{\mathbf{r}}^t = \bar{\mathbf{r}}^t + \varepsilon \sum_{i=e}^{t-1} \mathbf{1}_i$, $\tilde{\mathbf{i}}^t  = \bar{\mathbf{i}}^t$ and if $e > t$, then $\tilde{\mathbf{i}}^t = \bar{\mathbf{i}}^t + \varepsilon \sum_{i=t}^{e-1} \mathbf{1}_i $, $\tilde{\mathbf{r}}^t  = \bar{\mathbf{r}}^t$.
	\end{enumerate}
	
	Next, we give $2|S^+|-1$ affinely independent feasible points $\hat{z}^t$ and $\check{z}^t$ corresponding to $t\in S^+$ that are on the face $F$. Let $k$ be the arc in $E^+\setminus S^+$ with largest capacity.	
	\begin{enumerate} [(i)] \setcounter{enumi}{3}
	\item In the feasible solutions $\hat{z}^t$ for $e \neq t\in S^+$, we open arcs in $S^+\cup \{k\}$ and send an $\varepsilon$ flow from arc $k$ to arc $t$. Let $\hat{\mathbf{y}}^t = \bar{\mathbf{y}}^k + \varepsilon\mathbf{1}_k - \varepsilon\mathbf{1}_t$ and $\hat{\mathbf{x}}^t = \bar{\mathbf{x}}^k$. If $t < k$, then $\hat{\mathbf{r}}^t = \bar{\mathbf{r}}^k + \varepsilon \sum_{i=t}^{k-1} \mathbf{1}_i$, $\hat{\mathbf{i}}^t  = \bar{\mathbf{i}}^k$ and if $t > k$, then $\hat{\mathbf{i}}^t = \bar{\mathbf{i}}^k + \varepsilon \sum_{i=k}^{t-1} \mathbf{1}_i $, $\hat{\mathbf{r}}^t  = \bar{\mathbf{r}}^k$.
\ignore{	\item Let $\hat{z}^e$ be the solution where the arcs in $S^+$ and $E^+\setminus K^+$ are open and arcs in $E^+\setminus S^+$ are closed. Sufficiency condition \eqref{i2:suf3} ensures that flow balance equalities \eqref{const:flowbalance} can be satisfied without arcs in $E^+\setminus S^+$. As a result, there exist flow values $\hat{y}_t^e$ such that $\sum_{t\in S^+} \hat{y}_t^e = v(S^+)$ and $\sum_{t\in S^+} \hat{y}_t^e = d_{1n}$. Then, $\hat{x}_t^e = 1$ if $t\in S^+\cup (E^+\setminus K^+)$, $0$ otherwise. The forward and backward path arc values are calculated using flow balance equalities where at most one of $\hat{i}_j^e$ and $\hat{r}_j^e$ is non-zero for each $j \in N$.}
	\item In the solutions $\check{z}^t$ for $t\in S^+$, we close arc $t$ and open arc $j_t\in E^+\setminus S^+$ that is introduced in the sufficient condition \eqref{i2:suf2}. Then, $\check{x}_j^t = 1$ if $j\in S^+ \setminus \{t\}$ and if $j=j_t$ and $\check{x}_j^t=0$ otherwise. From sufficient condition \eqref{i2:suf2}, there exists $\check{y}_j^t$ values that satisfy the flow balance equalities \eqref{const:flowbalance}. Moreover, these $\check{y}_j^t$ values satisfy inequality \eqref{ineq:submod2exp} at equality since both $S^+\cup \{j_t\}$ and $S^+\setminus\{t\}\cup \{j_t\}$ are path covers for $N$. Then, the forward and backward path arc flows are found using flow balance equalities where at most one of $\check{i}_j^t$ and $\check{r}_j^t$ are nonzero for each $j\in N$.
	\end{enumerate}
\ignore{
Now, $2|E^+\setminus K^+|-1$ points are provided represented by $\acute{z}^t$ and $\grave{z}^t$ for $t\in E^+\setminus K^+$. Let $a$ be an arbitrary arc in $E^+\setminus S^+$.
\begin{enumerate} [(i)] \setcounter{enumi}{6}
	\item Let $t\in E^+\setminus K^+$. We start with solution $\bar{z}^a$ and open arcs in $E^+\setminus K^+$ one by one. Then, $\acute{\mathbf{x}}^t = \bar{\mathbf{x}}^a + \mathbf{1}_t$, $\acute{\mathbf{y}}^t = \bar{\mathbf{y}}^a$, $\acute{\mathbf{i}}^t = \bar{\mathbf{i}}^a$ and $\acute{\mathbf{r}}^t = \bar{\mathbf{r}}^a$.
	\item In the solution set $\grave{z}^t$, we start with solution $\hat{z}^e$ where the arcs in $S^+\cup (E^+\setminus K^+)$ are open. Let $l\in E^+\setminus K^+$ be an arbitrary arc where $0 < \hat{y}_l^e < c_l$. Existence of such an arc is guaranteed because of sufficiency condition \eqref{i2:suf4}. Then, in solution $\grave{z}^t$, we shift a flow of $\varepsilon$ between arcs $l$ and $t$. For each $t\in E^+\setminus K$,
		\begin{enumerate} [(a)]
			\item if $\hat{y}_t^e = c_t$, then $\grave{\mathbf{y}}^t = \hat{\mathbf{y}}^e + \varepsilon \mathbf{1}_l - \varepsilon \mathbf{1}_t$, $\grave{\mathbf{x}}^t = \hat{\mathbf{x}}^e$, if $l < t$, then $\grave{\mathbf{i}}^t = \hat{\mathbf{i}}^e + \varepsilon + \sum_{i=l}^{t-1} \mathbf{1}_i$ and $\grave{\mathbf{r}}^t = \hat{\mathbf{r}}^e$, if $l > t$, then $\grave{\mathbf{r}}^t = \hat{\mathbf{r}}^e + \varepsilon + \sum_{i=t}^{l-1} \mathbf{1}_i$ and $\grave{\mathbf{i}}^t = \hat{\mathbf{i}}^e$,
			\item if $\hat{y}_t^e < c_t$, then $\grave{\mathbf{y}}^t = \hat{\mathbf{y}}^e - \varepsilon \mathbf{1}_l + \varepsilon \mathbf{1}_t$, $\grave{\mathbf{x}}^t = \hat{\mathbf{x}}^e$, if $l < t$, then $\grave{\mathbf{r}}^t = \hat{\mathbf{r}}^e + \varepsilon + \sum_{i=l}^{t-1} \mathbf{1}_i$ and $\grave{\mathbf{i}}^t = \hat{\mathbf{i}}^e$, if $l > t$, then $\grave{\mathbf{i}}^t = \hat{\mathbf{i}}^e + \varepsilon + \sum_{i=t}^{l-1} \mathbf{1}_i$ and $\grave{\mathbf{r}}^t = \hat{\mathbf{r}}^e$.
		\end{enumerate}
\end{enumerate}}
Finally, we give $n-1$ points $\breve{z}^j$ corresponding to forward and backward path arcs connecting nodes $j$ and $j+1$.
\begin{enumerate}[(i)] \setcounter{enumi}{5}
	\item In the solution set $\breve{z}^j$ for $j=1,\dots,n-1$, starting with solution $\bar{z}^{k_j}$, where $k_j$ is introduced in the sufficient condition \eqref{i2:suf5}, we send a flow of $\varepsilon$ from both forward path arc $(j-1, j)$ and backward path arc $(j,j-1)$. Since the sufficiency condition \eqref{i2:suf5} ensures that $\bar{r}_{j}^{k_j}<b_{j}$ and $\bar{i}_{j}^{k_j}<u_{j}$, the operation is feasible. Let $\breve{\mathbf{y}}^j = \bar{\mathbf{y}}^{k_j}$, $\breve{\mathbf{x}}^j = \bar{\mathbf{x}}^{k_j}$, $\breve{\mathbf{i}}^j = \bar{\mathbf{i}}^{k_j} + \varepsilon \mathbf{1}_j$ and $\breve{\mathbf{r}}^j = \bar{\mathbf{r}}^{k_j} + \varepsilon\mathbf{1}_j$.
\end{enumerate}
\end{proof}
	
\section{Computational study} \label{sec:comp}
We test the effectiveness of path cover and path pack inequalities \eqref{ineq:submod1exp} and \eqref{ineq:submod2exp} by embedding them in a branch-and-cut framework. The experiments are ran on a Linux workstation with 2.93 GHz Intel\textregistered Core\textsuperscript{TM} i7 CPU and 8 GB of RAM  with 1 hour time limit and 1 GB memory limit. The branch-and-cut algorithm is implemented in C++ using IBM's Concert Technology of CPLEX	 (version 12.5). The experiments are ran with one hour limit on elapsed time and 1 GB limit on memory usage. The number of threads is set to one and the dynamic search is disabled. We also turn off heuristics and preprocessing as the purpose is to see the impact of the inequalities by themselves.

\subsection*{Instance generation}
We use a capacitated lot-sizing model with backlogging, where constraints \eqref{const:flowbalance} reduce to: $$i_{j-1} - r_{j-1} + y_j - i_j + r_j = d_j, \quad j \in N.$$ Let $n$ be the total number of time periods and $f$ be the ratio of the fixed cost to the variable cost associated with a non-path arc. The parameter $c$ controls how large the non-path arc capacities are with respect to average demand. All parameters are generated from a discrete uniform distribution. The demand for each node is drawn from the range $[0,30]$ and non-path arc capacities are drawn from the range $[0.75\times c\times \bar{d}, 1.25\times c\times \bar{d}]$, where $\bar{d}$ is the average demand over all time periods. Forward and backward path arc capacities are drawn from $[1.0\times \bar{d},2.0	\times \bar{d}]$ and $[0.3\times \bar{d},0.8\times \bar{d}]$, respectively. The variable costs $p_t$, $h_t$ and $g_t$ are drawn from the ranges $[1,10]$, $[1,10]$ and $[1,20]$ respectively. Finally, fixed costs $f_t$ are set equal to $f\times p_t$. Using these parameters, we generate five random instances for each combination of $n\in \{50, 100, 150\}$, $f\in \{100,200,500,1000\}$ and $c\in \{2, 5, 10\}$.

\subsection*{Finding violated inequalities} Given a feasible solution $(\textbf{x}^*, \textbf{y}^*, \textbf{i}^*, \textbf{r}^*)$ to a linear programming (LP) relaxation of (F\ref{opt:ls}), the separation problem aims to find sets $S^+$ and $L^-$ that maximize the difference
\begin{align*}
		y^*(S^+)- y^*( E^-\setminus L^-) + \sum_{t\in S^+} (c_t - \lambda_j)^+(1-x_t^*) - \sum_{t\in L^-} (\min \{ \lambda_j, c_t \})x_t^* - d_{1n} - c(S^-)
\end{align*}
for path cover inequality \eqref{ineq:submod1exp} and sets $S^+$ and $S^-$ that maximize
\begin{align*}
	y^*(S^+) - y^*(E^-\setminus S^-) - \sum_{t\in E^+\setminus S^+} \min \{c_t, \mu_j \} x_t^* + \sum_{t\in S^-} (c_t-\mu_j)^+(1-x_t)- c(S^+)
\end{align*}
for path pack inequality \eqref{ineq:submod2exp}. We use the knapsack relaxation based heuristic separation strategy described in \cite[pg. 500]{NW88} for flow cover inequalities to choose the objective set $S^+$ with a knapsack capacity $d_{1n}$. Using $S^+$, we obtain the values $\lambda_j$ and $\mu_j$ for each $j\in N$ and let $S^- = \emptyset$ for path cover and path pack inequalities \eqref{ineq:submod1exp} and \eqref{ineq:submod2exp}. For path cover inequalities \eqref{ineq:submod1exp}, we add an arc $t \in E^-$ to $L^-$ if $\lambda_j x_t^* < y_t^*$ and $\lambda_j < c_t$. We repeat the separation process for all subsets $[k,\ell] \subseteq [1,n]$.

\subsection*{Results}
We report multiple performance measures. Let $z_{\text{INIT}}$ be the objective function value of the initial LP relaxation and $z_{\text{ROOT}}$ be the objective function value of the LP relaxation after all the valid inequalities added. Moreover, let $z_{\text{UB}}$ be the objective function value of the best feasible solution found within time/memory limit among all experiments for an instance. Let \texttt{init gap}$=100\times \frac{z_{\text{UB}}-z_{\text{INIT}}}{z_{\text{UB}}}$, \texttt{root gap}$=100\times\frac{z_{\text{UB}}-z_{\text{ROOT}}}{z_{\text{UB}}}$. We compute the improvement of the relaxation due to adding valid inequalities as \texttt{gap imp}$=100\times \frac{\texttt{init gap}-\texttt{root gap}}{\texttt{init gap}}$. We also measure the optimality gap at termination as $\texttt{end gap} = \frac{z_{\text{UB}} - z_{\text{LB}}}{z_{\text{UB}}}$, where $z_{\text{LB}}$ is the value of the best lower bound given by CPLEX. We report the average number of valid inequalities added at the root node under column $\texttt{cuts}$, average elapsed time in seconds under $\texttt{time}$, average number of branch-and-bound nodes explored under $\texttt{nodes}$. If there are instances that are not solved to optimality within the time/memory limit, we report the average end gap and the number of unsolved instances under $\texttt{unslvd}$ next to \texttt{time} results. All numbers except initial gap, end gap and time are rounded to the nearest integers.

In Tables \ref{tab:path}, \ref{tab:cover_vs_pack} and \ref{tab:fc}, we present the performance with the path cover \eqref{ineq:submod1exp} and path pack \eqref{ineq:submod2exp} inequalities under columns \texttt{spi}. To understand how the forward and backward path arc capacities affect the computational performance, we also apply them to the single node relaxations obtained by merging a path into a single node, where the capacities of forward and backward path arcs within a path are considered to be infinite. In this case, the path inequalities reduce to the flow cover and flow pack inequalities. These results are presented under columns \texttt{mspi}.

%\begin{landscape}
% Table generated by Excel2LaTeX from sheet 'Sheet1'
\begin{table}[btbp]
  \centering
  \small
%    \vspace{-0.8cm}
    \caption{Effect of path size on the performance.}
     %of submodular path inequalities applied on paths (\texttt{spi}) and merged paths (\texttt{mspi}), where $p$ represents the path size.}
    \begin{tabular}{cccccccccccccccccc}
    \toprule
          &       &       & &\multicolumn{2}{c}{$p=1$} & \multicolumn{4}{c}{$p \leq 5$} & \multicolumn{4}{c}{$p \leq 0.5\times n$} & \multicolumn{4}{c}{$p \leq n$} \\
    \cmidrule(l){5-6} \cmidrule(l){7-10} \cmidrule(l){11-14} \cmidrule(l){15-18}
    \multicolumn{1}{c}{\multirow{2}[1]{*}{ $n$ }} & \multicolumn{1}{c}{\multirow{2}[1]{*}{ $f$ }} & \multicolumn{1}{c}{\multirow{2}[1]{*}{ $c$ }} & \texttt{init}  & \multicolumn{1}{c}{\texttt{gap imp}} & \multicolumn{1}{c}{\texttt{cuts}} & \multicolumn{2}{c}{\texttt{gap imp}} & \multicolumn{2}{c}{\texttt{cuts}} & \multicolumn{2}{c}{\texttt{gap imp}} & \multicolumn{2}{c}{\texttt{cuts}} & \multicolumn{2}{c}{\texttt{gap imp}} & \multicolumn{2}{c}{\texttt{cuts}} \\
    \cmidrule(l){5-5} \cmidrule(l){6-6} \cmidrule(l){7-8} \cmidrule(l){9-10} \cmidrule(l){11-12} \cmidrule(l){13-14} \cmidrule(l){15-16} \cmidrule(l){17-18}
   \multicolumn{1}{c}{} & \multicolumn{1}{c}{} & \multicolumn{1}{c}{} & \texttt{gap} & \texttt{(m)spi}   & \texttt{(m)spi}   & \texttt{spi}  & \texttt{mspi} & \texttt{spi}  & \texttt{mspi} & \texttt{spi}  & \texttt{mspi} & \texttt{spi}  & \texttt{mspi} & \texttt{spi}  & \texttt{mspi} & \texttt{spi}  & \texttt{mspi} \\
    \midrule
     \multirow{12}[8]{*}{50} & \multirow{3}[2]{*}{100} & 2     & 14.8  & 34\%  & 21    & 87\%  & 52\%  & 212   & 106   & 97\%  & 52\%  & 1164  & 158   & 97\%  & 52\%  & 1233  & 158 \\
          &       & 5     & 44.3  & 56\%  & 52    & 99\%  & 69\%  & 303   & 148   & 99\%  & 69\%  & 664   & 151   & 99\%  & 69\%  & 664   & 151 \\
          &       & 10    & 58.3  & 60\%  & 54    & 96\%  & 70\%  & 277   & 147   & 99\%  & 70\%  & 574   & 167   & 99\%  & 70\%  & 574   & 167 \\
          \cmidrule(l){2-18}
          & \multirow{3}[2]{*}{200} & 2     & 14.5  & 32\%  & 22    & 81\%  & 57\%  & 257   & 133   & 96\%  & 61\%  & 1965  & 241   & 96\%  & 61\%  & 2387  & 241 \\
          &       & 5     & 49.8  & 43\%  & 44    & 99\%  & 57\%  & 378   & 162   & 100\% & 57\%  & 1264  & 171   & 100\% & 57\%  & 1420  & 171 \\
          &       & 10    & 66.3  & 38\%  & 47    & 98\%  & 50\%  & 392   & 169   & 99\%  & 51\%  & 1235  & 197   & 99\%  & 51\%  & 1283  & 197 \\
          \cmidrule(l){2-18}
          & \multirow{3}[2]{*}{500} & 2     & 19.1  & 23\%  & 19    & 77\%  & 48\%  & 266   & 128   & 90\%  & 49\%  & 2286  & 222   & 90\%  & 49\%  & 3249  & 222 \\
          &       & 5     & 54.4  & 35\%  & 36    & 99\%  & 49\%  & 522   & 185   & 100\% & 49\%  & 1981  & 205   & 100\% & 49\%  & 2074  & 205 \\
          &       & 10    & 73.0  & 34\%  & 43    & 99\%  & 40\%  & 498   & 196   & 99\%  & 40\%  & 1336  & 236   & 99\%  & 40\%  & 1445  & 236 \\
          \cmidrule(l){2-18}
          & \multirow{3}[2]{*}{1000} & 2     & 14.6  & 18\%  & 15    & 73\%  & 39\%  & 264   & 99    & 83\%  & 40\%  & 2821  & 211   & 83\%  & 40\%  & 3918  & 212 \\
          &       & 5     & 59.7  & 31\%  & 36    & 98\%  & 45\%  & 487   & 201   & 100\% & 45\%  & 2077  & 239   & 100\% & 45\%  & 2329  & 239 \\
          &       & 10    & 76.9  & 30\%  & 41    & 100\% & 36\%  & 529   & 215   & 100\% & 37\%  & 1935  & 268   & 100\% & 37\%  & 2149  & 268 \\
    \midrule
    \multicolumn{3}{r}{\textbf{Average:}} & \textbf{45.5}	 &	\textbf{36\%} & \textbf{36} &	\textbf{92\%} &	\textbf{51\%} & 	\textbf{365} &	\textbf{157} &	\textbf{97\%} &	\textbf{52\%	} & \textbf{1609} &	\textbf{206}	&\textbf{97\%}	&\textbf{52\%}	& \textbf{1894} &	\textbf{206} \\
    \bottomrule
    \end{tabular}%
  \label{tab:path}%
\end{table}%

In Table \ref{tab:path}, we focus on the impact of path size on the gap improvement of the path cover and path pack inequalities for instances with $n=50$. In the columns under $p=1$, we obtain the same results for both \texttt{mspi} and \texttt{spi} since the paths are singleton nodes.. We present these results under \texttt{(m)spi}. In columns $p\leq q$, we add valid inequalities for paths of size $1,\dots, q$ and observe that as the path size increases, the gap improvement of the path inequalities increase rapidly. On average 97\% of the initial gap is closed as longer paths are used. On the other hand, flow cover and pack inequalities from merged paths reduce about half of the initial gap. These results underline the importance of exploiting path arc capacities on strengthening the formulations. We also observe that the increase in gap improvement diminishes as path size grows. \textcolor{red}{We choose a conservative maximum path size limit of $0.75\times n$ for the experiments reported in Tables \ref{tab:cover_vs_pack}, \ref{tab:fc} and \ref{tab:cpx}.}

\textcolor{red}{In Table \ref{tab:cover_vs_pack}, we investigate the computational performance of path cover and path pack inequalities independently. We present the results for path cover inequalities under columns titled \texttt{cov}, for path pack inequalities under \texttt{pac} and for both of them under the columns titled \texttt{spi}. On average, path cover and path pack inequalities independently close the gap by 63\% and 53\%, respectively. However, when used together, the gap improvement is 96\%, which shows that the two classes of inequalities complement each other very well.}

In Table \ref{tab:fc}, we present other performance measures as well for instances with 50, 100, and 150 nodes. We observe that the forward and backward path arc capacities have a large impact on the performance level of the path cover and pack inequalities. Compared to flow cover and pack inequalities added from merged paths, path cover and path pack inequalities reduce the number of nodes and solution times by orders of magnitude. This is mainly due to better integrality gap improvement (50\% vs 95\% on average).

In Table \ref{tab:cpx}, we examine the incremental effect of path cover and path pack inequalities over the fixed-charge network cuts of CPLEX, namely flow cover, flow path and multi-commodity flow cuts. Under \texttt{cpx}, we present the performance of flow cover, flow path and multi-commodity flow cuts added by CPLEX and under \texttt{cpx\_spi}, we add path cover and path pack inequalities addition to these cuts. We observe that with the addition of path cover and pack inequalities, the gap improvement increases from 86\% to 95\%. The number of branch and bound nodes explored is reduced about 900 times. Moreover, with path cover and path pack inequalities the average elapsed time is reduced to almost half and the total number of unsolved instances reduces from 13 to 6 out of 180 instances.

Tables \ref{tab:path}, \ref{tab:cover_vs_pack}, \ref{tab:fc} and \ref{tab:cpx} show that submodular path inequalities are quite effective in tackling lot-sizing problems with finite arc capacities. When added to the LP relaxation, they improve the optimality gap by 95\% and the number of branch and bound nodes explored decreases by a factor of 1000. In conclusion, our computational experiments indicate that the use of path cover and path pack inequalities is beneficial in improving the performance of the branch-and-cut algorithms.

% Table generated by Excel2LaTeX from sheet 'Sheet2'

\begin{table}[htbp]
  \centering
  \small
  \caption{\textcolor{red}{Effect of path cover (\texttt{cov}) and path pack (\texttt{pac}) inequalities when used separately and together (\texttt{spi}).}}
    {\color{red} \begin{tabular}{cllC{1cm}rrrrrrrrrrrr}
    \toprule
          &       &       &       & \multicolumn{3}{C{2.5cm}}{ \texttt{gap imp}} & \multicolumn{3}{C{2.3cm}}{\texttt{nodes}} & \multicolumn{3}{C{2.3cm}}{\texttt{cuts}} & \multicolumn{3}{C{2cm}}{\texttt{time}} \\
    \cmidrule(l){5-7}     \cmidrule(l){8-10}     \cmidrule(l){11-13}     \cmidrule(l){14-16}   
    \multicolumn{1}{c}{$n$}     & \multicolumn{1}{c}{$f$}     & \multicolumn{1}{c}{$c$}     & \multicolumn{1}{C{1cm}}{\texttt{init gap}} & \multicolumn{1}{c}{\texttt{cov}} & \multicolumn{1}{c}{\texttt{pac}}  & \multicolumn{1}{c}{\texttt{spi}}   & \multicolumn{1}{c}{\texttt{cov}} & \multicolumn{1}{c}{\texttt{pac}}  & \multicolumn{1}{c}{\texttt{spi}}   & \multicolumn{1}{c}{\texttt{cov}} & \multicolumn{1}{c}{\texttt{pac}}  & \multicolumn{1}{c}{\texttt{spi}}   & \multicolumn{1}{c}{\texttt{cov}} & \multicolumn{1}{c}{\texttt{pac}}  & \multicolumn{1}{c}{\texttt{spi}} \\
    \midrule
    \multicolumn{1}{c}{\multirow{12}[0]{*}{50}} & \multicolumn{1}{c}{\multirow{3}[0]{*}{100}} & 2     & 14.8  & 62\%  & 18\%  & 96\%  & 273   & 6258  & 7     & 759   & 13    & 1151  & 0.3   & 0.6   & 0.2 \\
    \multicolumn{1}{c}{} & \multicolumn{1}{c}{} & 5     & 44.3  & 75\%  & 37\%  & 97\%  & 319   & 12366 & 9     & 357   & 25    & 435   & 0.1   & 1.0   & 0.1 \\
    \multicolumn{1}{c}{} & \multicolumn{1}{c}{} & 10    & 58.3  & 77\%  & 39\%  & 93\%  & 213   & 29400 & 63    & 290   & 11    & 386   & 0.1   & 2.1   & 0.1 \\
    \cmidrule(l){2-16} 
    \multicolumn{1}{c}{} & \multicolumn{1}{c}{\multirow{3}[0]{*}{200}} & 2     & 14.5  & 73\%  & 34\%  & 92\%  & 148   & 3268  & 18    & 1593  & 41    & 2469  & 0.7   & 0.3   & 0.6 \\
    \multicolumn{1}{c}{} & \multicolumn{1}{c}{} & 5     & 49.8  & 67\%  & 38\%  & 100\% & 576   & 11525 & 3     & 736   & 36    & 1022  & 0.4   & 0.9   & 0.1 \\
    \multicolumn{1}{c}{} & \multicolumn{1}{c}{} & 10    & 66.3  & 61\%  & 48\%  & 97\%  & 226   & 8799  & 14    & 619   & 32    & 739   & 0.2   & 0.7   & 0.1 \\
    \cmidrule(l){2-16} 
    \multicolumn{1}{c}{} & \multicolumn{1}{c}{\multirow{3}[0]{*}{500}} & 2     & 19.1  & 57\%  & 57\%  & 92\%  & 635   & 1825  & 19    & 1587  & 316   & 2577  & 1.6   & 0.3   & 1.0 \\
    \multicolumn{1}{c}{} & \multicolumn{1}{c}{} & 5     & 54.4  & 56\%  & 75\%  & 99\%  & 348   & 363   & 1     & 902   & 148   & 1164  & 0.3   & 0.1   & 0.1 \\
    \multicolumn{1}{c}{} & \multicolumn{1}{c}{} & 10    & 73.0  & 59\%  & 65\%  & 97\%  & 8410  & 5284  & 11    & 727   & 67    & 698   & 3.0   & 0.5   & 0.1 \\
        \cmidrule(l){2-16} 
    \multicolumn{1}{c}{} & \multicolumn{1}{c}{\multirow{3}[0]{*}{1000}} & 2     & 14.6  & 61\%  & 65\%  & 90\%  & 278   & 258   & 60    & 1362  & 427   & 2094  & 0.8   & 0.3   & 0.9 \\
    \multicolumn{1}{c}{} & \multicolumn{1}{c}{} & 5     & 59.7  & 59\%  & 81\%  & 100\% & 1063  & 208   & 2     & 1673  & 364   & 1792  & 1.3   & 0.1   & 0.1 \\
    \multicolumn{1}{c}{} & \multicolumn{1}{c}{} & 10    & 76.9  & 51\%  & 77\%  & 99\%  & 3791  & 1452  & 5     & 1202  & 155   & 1032  & 2.1   & 0.2   & 0.1 \\
    \midrule
    \multicolumn{3}{c}{\textbf{Average:}} & \textbf{45.5} & \textbf{63\%} & \textbf{53\%} & \textbf{96\%} & \textbf{1357} & \textbf{6751} & \textbf{18} & \textbf{984} & \textbf{136} & \textbf{1297} & \textbf{0.9} & \textbf{0.6} & \textbf{0.3} \\
    \bottomrule
    \end{tabular}}%
  \label{tab:cover_vs_pack}%
\end{table}% 

% Table generated by Excel2LaTeX from sheet 'fc_vs_spi'
\begin{table} %[htbp]
  \centering
  \small
  \caption{Comparison of path inequalities applied to paths (\texttt{spi}) versus applied to merged paths (\texttt{mspi}).}
    %\begin{tabular}{C{0.4cm}L{0.55cm}L{0.4cm}R{0.75cm}R{0.85cm}R{0.7cm}R{0.75cm}R{1.45cm}R{0.9cm}R{0.55cm}R{1cm}L{1cm}R{1cm}L{1cm}}
%    \hspace*{-1cm}
    \begin{tabular}{cllR{0.7cm}rrrrrrrlrl}
    \toprule
          &       &       &       & \multicolumn{2}{c}{ \texttt{gapimp} } & \multicolumn{2}{c}{ \texttt{nodes} } & \multicolumn{2}{c}{ \texttt{cuts} } & \multicolumn{4}{c}{\texttt{time (endgap:unslvd)}} \\
    \cmidrule(l){5-6} \cmidrule(l){7-8} \cmidrule(l){9-10} \cmidrule(l){11-14}
    \multicolumn{1}{c}{$n$ } & \multicolumn{1}{c}{ $f$ } & \multicolumn{1}{c}{ $c$ } &  \texttt{init gap}  & \multicolumn{1}{c}{\texttt{spi}} & \multicolumn{1}{c}{\texttt{mspi}} & \multicolumn{1}{c}{\texttt{spi}} & \multicolumn{1}{c}{\texttt{mspi}} & \multicolumn{1}{c}{\texttt{spi}} & \multicolumn{1}{c}{\texttt{mspi}} & \multicolumn{2}{c}{\texttt{spi}} & \multicolumn{2}{c}{\texttt{mspi}} \\
    \midrule
    \multicolumn{1}{c}{\multirow{12}[8]{*}{50}} & \multicolumn{1}{c}{\multirow{3}{*}{100}} & 2     & 14.8  & 96\%  & 52\%  & 7     & 430   & 1151  & 195   & 0.2   & \multicolumn{1}{l}{} & 0.2   & \multicolumn{1}{l}{} \\
    \multicolumn{1}{c}{} & \multicolumn{1}{c}{} & 5     & 44.3  & 97\%  & 69\%  & 9     & 553   & 435   & 146   & 0.1   & \multicolumn{1}{l}{} & 0.1   & \multicolumn{1}{l}{} \\
    \multicolumn{1}{c}{} & \multicolumn{1}{c}{} & 10    & 58.3  & 93\%  & 70\%  & 63    & 468   & 386   & 160   & 0.1   & \multicolumn{1}{l}{} & 0.1   & \multicolumn{1}{l}{} \\
    \cmidrule(l){2-14}
    \multicolumn{1}{c}{} & \multicolumn{1}{c}{\multirow{3}{*}{200}} & 2     & 14.5  & 92\%  & 59\%  & 18    & 330   & 2469  & 226   & 0.6   & \multicolumn{1}{l}{} & 0.2   & \multicolumn{1}{l}{} \\
    \multicolumn{1}{c}{} & \multicolumn{1}{c}{} & 5     & 49.8  & 100\% & 57\%  & 3     & 1112  & 1022  & 176   & 0.1   & \multicolumn{1}{l}{} & 0.3   & \multicolumn{1}{l}{} \\
    \multicolumn{1}{c}{} & \multicolumn{1}{c}{} & 10    & 66.3  & 97\%  & 53\%  & 14    & 615   & 739   & 173   & 0.1   & \multicolumn{1}{l}{} & 0.2   & \multicolumn{1}{l}{} \\
    \cmidrule(l){2-14}
    \multicolumn{1}{c}{} & \multicolumn{1}{c}{\multirow{3}{*}{500}} & 2     & 19.1  & 92\%  & 43\%  & 19    & 2041  & 2577  & 238   & 1.0   & \multicolumn{1}{l}{} & 0.7   & \multicolumn{1}{l}{} \\
    \multicolumn{1}{c}{} & \multicolumn{1}{c}{} & 5     & 54.4  & 99\%  & 48\%  & 1     & 705   & 1164  & 214   & 0.1   & \multicolumn{1}{l}{} & 0.3   & \multicolumn{1}{l}{} \\
    \multicolumn{1}{c}{} & \multicolumn{1}{c}{} & 10    & 73.0  & 97\%  & 48\%  & 11    & 5659  & 698   & 248   & 0.1   & \multicolumn{1}{l}{} & 1.4   & \multicolumn{1}{l}{} \\
    \cmidrule(l){2-14}
    \multicolumn{1}{c}{} & \multicolumn{1}{c}{\multirow{3}{*}{1000}} & 2     & 14.6  & 90\%  & 45\%  & 60    & 612   & 2094  & 301   & 0.9   & \multicolumn{1}{l}{} & 0.4   & \multicolumn{1}{l}{} \\
    \multicolumn{1}{c}{} & \multicolumn{1}{c}{} & 5     & 59.7  & 100\% & 50\%  & 2     & 2265  & 1792  & 241   & 0.1   & \multicolumn{1}{l}{} & 0.7   & \multicolumn{1}{l}{} \\
    \multicolumn{1}{c}{} & \multicolumn{1}{c}{} & 10    & 76.9  & 99\%  & 40\%  & 5     & 9199  & 1032  & 314   & 0.1   & \multicolumn{1}{l}{} & 2.3   & \multicolumn{1}{l}{} \\
    \midrule
    \multicolumn{1}{c}{\multirow{12}[8]{*}{100}} & \multicolumn{1}{c}{\multirow{3}{*}{100}} & 2     & 13.9  & 95\%  & 65\%  & 39    & 7073  & 3114  & 410   & 1.3   & \multicolumn{1}{l}{} & 3.2   & \multicolumn{1}{l}{} \\
    \multicolumn{1}{c}{} & \multicolumn{1}{c}{} & 5     & 42.2  & 98\%  & 70\%  & 19    & 20897 & 1337  & 297   & 0.2   & \multicolumn{1}{l}{} & 4.8   & \multicolumn{1}{l}{} \\
    \multicolumn{1}{c}{} & \multicolumn{1}{c}{} & 10    & 57.8  & 94\%  & 59\%  & 230   & 395277 & 1298  & 346   & 0.4   & \multicolumn{1}{l}{} & 88.2  & \multicolumn{1}{l}{}\\
        \cmidrule(l){2-14}
    \multicolumn{1}{c}{} & \multicolumn{1}{c}{\multirow{3}{*}{200}} & 2     & 16.1  & 89\%  & 56\%  & 290   & 151860 & 6919  & 478   & 11.0  & \multicolumn{1}{l}{} & 58.4  & \multicolumn{1}{l}{} \\
    \multicolumn{1}{c}{} & \multicolumn{1}{c}{} & 5     & 47.6  & 99\%  & 55\%  & 7     & 455192 & 2355  & 331   & 0.3   & \multicolumn{1}{l}{} & 126.1 & \multicolumn{1}{l}{}\\
    \multicolumn{1}{c}{} & \multicolumn{1}{c}{} & 10    & 65.7  & 95\%  & 54\%  & 104   & 4130780 & 1872  & 399   & 0.5   & \multicolumn{1}{l}{} & 962.3 & (1.1:1) \\
    \cmidrule(l){2-14}
    \multicolumn{1}{c}{} & \multicolumn{1}{c}{\multirow{3}{*}{500}} & 2     & 17.5  & 84\%  & 36\%  & 1047  & 956745 & 11743 & 475   & 47.7  & \multicolumn{1}{l}{} & 390.9 &  \\
    \multicolumn{1}{c}{} & \multicolumn{1}{c}{} & 5     & 53.9  & 99\%  & 41\%  & 4     & 332041 & 3874  & 444   & 0.4   & \multicolumn{1}{l}{} & 115.5 &  \\
    \multicolumn{1}{c}{} & \multicolumn{1}{c}{} & 10    & 72.9  & 96\%  & 42\%  & 34    & 1175647 & 1495  & 474   & 0.3   & \multicolumn{1}{l}{} & 352.5 &  \\
    \cmidrule(l){2-14}
    \multicolumn{1}{c}{} & \multicolumn{1}{c}{\multirow{3}{*}{1000}} & 2     & 17.9  & 91\%  & 41\%  & 173   & 57147 & 10919 & 570   & 21.3  & \multicolumn{1}{l}{} & 23.0  &  \\
    \multicolumn{1}{c}{} & \multicolumn{1}{c}{} & 5     & 58.5  & 100\% & 45\%  & 1     & 284979 & 3261  & 501   & 0.3   & \multicolumn{1}{l}{} & 92.8  &  \\
    \multicolumn{1}{c}{} & \multicolumn{1}{c}{} & 10    & 75.7  & 97\%  & 36\%  & 88    & 3158262 & 2358  & 657   & 0.5   & \multicolumn{1}{l}{} & 1047.0 & (0.7:1) \\
    \midrule
    \multicolumn{1}{c}{\multirow{12}[8]{*}{150}} & \multicolumn{1}{c}{\multirow{3}{*}{100}} & 2     & 13.2  & 94\%  & 64\%  & 336   & 163242 & 5159  & 704   & 11.3  & \multicolumn{1}{l}{} & 107.6 &  \\
    \multicolumn{1}{c}{} & \multicolumn{1}{c}{} & 5     & 44.8  & 99\%  & 65\%  & 17    & 3024118 & 2087  & 431   & 0.5   & \multicolumn{1}{l}{} & 929.6 &  \\
    \multicolumn{1}{c}{} & \multicolumn{1}{c}{} & 10    & 56.9  & 95\%  & 65\%  & 404   & 7254052 & 1492  & 476   & 0.9   & \multicolumn{1}{l}{} & 2087.3 & (0.7:1) \\
    \cmidrule(l){2-14}
    \multicolumn{1}{c}{} & \multicolumn{1}{c}{\multirow{3}{*}{200}} & 2     & 14.7  & 92\%  & 53\%  & 519   & 2772494 & 12636 & 744   & 27.2  & \multicolumn{1}{l}{} & 1390.6 & (0.1:1) \\
    \multicolumn{1}{c}{} & \multicolumn{1}{c}{} & 5     & 48.1  & 99\%  & 55\%  & 15    & 3802938 & 2462  & 508   & 0.6   & \multicolumn{1}{l}{} & 1483.0 & (1.2:2) \\
    \multicolumn{1}{c}{} & \multicolumn{1}{c}{} & 10    & 65.2  & 95\%  & 50\%  & 330   & 9377122 & 2047  & 567   & 0.9   & \multicolumn{1}{l}{} & 3585.9 & (8.2:5) \\
    \cmidrule(l){2-14}
    \multicolumn{1}{c}{} & \multicolumn{1}{c}{\multirow{3}{*}{500}} & 2     & 19.3  & 86\%  & 33\%  & 7927  & 7619674 & 22275 & 792   & 1087.3 & \multicolumn{1}{l}{} & 3165.6 & (4.0:4) \\
    \multicolumn{1}{c}{} & \multicolumn{1}{c}{} & 5     & 54.4  & 100\% & 45\%  & 7     & 7873043 & 4927  & 641   & 0.8   & \multicolumn{1}{l}{} & 2813.6 & (4.3:3) \\
    \multicolumn{1}{c}{} & \multicolumn{1}{c}{} & 10    & 72.3  & 97\%  & 41\%  & 250   & 10219548 & 2678  & 713   & 1.2   & \multicolumn{1}{l}{} & 3422.8 & (11.0:5) \\
    \cmidrule(l){2-14}
    \multicolumn{1}{c}{} & \multicolumn{1}{c}{\multirow{3}{*}{1000}} & 2     & 19.6  & 88\%  & 34\%  & 2824  & 7316675 & 33729 & 724   & 804.8 & \multicolumn{1}{l}{} & 3260.3 & (2.5:3) \\
    \multicolumn{1}{c}{} & \multicolumn{1}{c}{} & 5     & 57.5  & 100\% & 39\%  & 2     & 9661586 & 6710  & 709   & 0.8   & \multicolumn{1}{l}{} & 3578.9 & (9.7:5) \\
    \multicolumn{1}{c}{} & \multicolumn{1}{c}{} & 10    & 75.8  & 96\%  & 37\%  & 99    & 9910056 & 3981  & 829   & 1.2   & \multicolumn{1}{l}{} & 3412.3 & (15.2:5) \\
    \midrule
    \multicolumn{3}{r}{\textbf{Average:}} & \textbf{45.2} & \textbf{95\%} & \textbf{50\%} & \textbf{416} & \textbf{2504012} & \textbf{4619} & \textbf{440} & \textbf{56.3} & \textbf{} & \textbf{903.0} & \textbf{(1.6:36)} \\
	\bottomrule
\end{tabular}
  \label{tab:fc}%
\end{table}%

% Table generated by Excel2LaTeX from sheet 'Sheet3'
\begin{table}[htbp]
  \centering
  \small
  \caption{Effectiveness of the path inequalities when used together with CPLEX's network cuts.}
    \begin{tabular}{cclC{1cm}ccrrrlrl}
    \toprule
    & & & & \multicolumn{2}{c}{\texttt{gapimp}} & \multicolumn{2}{c}{\texttt{nodes}} & \multicolumn{4}{c}{\texttt{time (endgap:unslvd)}} \\
    \cmidrule(l){5-6}    \cmidrule(l){7-8}     \cmidrule(l){9-12}
        $n$  & $f$ & $c$ & \texttt{init gap} & \texttt{cpx\_spi} & \texttt{cpx}   & \multicolumn{1}{c}{\texttt{cpx\_spi}} & \multicolumn{1}{c}{\texttt{cpx}} & \multicolumn{2}{c}{\texttt{cpx\_spi}} & \multicolumn{2}{c}{\texttt{cpx}} \\
    \midrule
    \multicolumn{1}{c}{\multirow{12}[8]{*}{100}} & \multicolumn{1}{c}{\multirow{3}[2]{*}{100}} & 2     & 13.9  & 96\%  & 85\%  & 35    & 1715  & 1.0   &       & 0.5   &  \\
    \multicolumn{1}{c}{} & \multicolumn{1}{c}{} & 5     & 42.2  & 99\%  & 97\%  & 5     & 75    & 0.2   &       & 0.1   &  \\
    \multicolumn{1}{c}{} & \multicolumn{1}{c}{} & 10    & 57.8  & 99\%  & 93\%  & 10    & 2970  & 0.3   &       & 0.6   &  \\
    \cmidrule(l){2-12}
    \multicolumn{1}{c}{} & \multicolumn{1}{c}{\multirow{3}[2]{*}{200}} & 2     & 16.1  & 90\%  & 79\%  & 288   & 9039  & 6.6   &       & 2.1   &  \\
    \multicolumn{1}{c}{} & \multicolumn{1}{c}{} & 5     & 47.6  & 99\%  & 95\%  & 7     & 52    & 0.3   &       & 0.1   &  \\
    \multicolumn{1}{c}{} & \multicolumn{1}{c}{} & 10    & 65.7  & 97\%  & 89\%  & 61    & 3186  & 0.4   &       & 0.7   &  \\
        \cmidrule(l){2-12}
    \multicolumn{1}{c}{} & \multicolumn{1}{c}{\multirow{3}[2]{*}{500}} & 2     & 17.5  & 85\%  & 63\%  & 1232  & 455068 & 57.3  &       & 95.2  &  \\
    \multicolumn{1}{c}{} & \multicolumn{1}{c}{} & 5     & 53.9  & 99\%  & 94\%  & 6     & 92    & 0.4   &       & 0.1   &  \\
    \multicolumn{1}{c}{} & \multicolumn{1}{c}{} & 10    & 72.9  & 98\%  & 89\%  & 11    & 4621  & 0.4   &       & 0.9   &  \\
    \cmidrule(l){2-12}
    \multicolumn{1}{c}{} & \multicolumn{1}{c}{\multirow{3}[2]{*}{1000}} & 2     & 17.9  & 91\%  & 76\%  & 173   & 18109 & 22.2  &       & 3.6   &  \\
    \multicolumn{1}{c}{} & \multicolumn{1}{c}{} & 5     & 58.5  & 100\% & 93\%  & 1     & 156   & 0.3   &       & 0.1   &  \\
    \multicolumn{1}{c}{} & \multicolumn{1}{c}{} & 10    & 75.7  & 97\%  & 85\%  & 117   & 5297  & 0.7   &       & 1.0   &  \\
	\midrule
    \multicolumn{1}{c}{\multirow{12}[8]{*}{150}} & \multicolumn{1}{c}{\multirow{3}[2]{*}{100}} & 2     & 13.2  & 94\%  & 86\%  & 365   & 60956 & 9.7   &       & 19.0  &  \\
    \multicolumn{1}{c}{} & \multicolumn{1}{c}{} & 5     & 44.8  & 100\% & 97\%  & 5     & 119   & 0.4   &       & 0.1   &  \\
    \multicolumn{1}{c}{} & \multicolumn{1}{c}{} & 10    & 56.9  & 99\%  & 92\%  & 16    & 15929 & 0.5   &       & 3.9   &  \\
        \cmidrule(l){2-12}
    \multicolumn{1}{c}{} & \multicolumn{1}{c}{\multirow{3}[2]{*}{200}} & 2     & 14.7  & 92\%  & 80\%  & 954   & 216436 & 44.9  &       & 66.7  &  \\
    \multicolumn{1}{c}{} & \multicolumn{1}{c}{} & 5     & 48.1  & 99\%  & 96\%  & 11    & 284   & 0.5   &       & 0.2   &  \\
    \multicolumn{1}{c}{} & \multicolumn{1}{c}{} & 10    & 65.2  & 97\%  & 91\%  & 181   & 3992  & 0.9   &       & 1.2   &  \\
        \cmidrule(l){2-12}
    \multicolumn{1}{c}{} & \multicolumn{1}{c}{\multirow{3}[2]{*}{500}} & 2     & 19.3  & 86\%  & 69\%  & 7647  & 4943603 & 1049.9 &       & 1215.1 & (0.2:1) \\
    \multicolumn{1}{c}{} & \multicolumn{1}{c}{} & 5     & 54.4  & 100\% & 94\%  & 5     & 5434  & 0.8   &       & 1.6   &  \\
    \multicolumn{1}{c}{} & \multicolumn{1}{c}{} & 10    & 72.3  & 97\%  & 88\%  & 141   & 141211 & 1.4   &       & 35.9  &  \\
        \cmidrule(l){2-12}
    \multicolumn{1}{c}{} & \multicolumn{1}{c}{\multirow{3}[2]{*}{1000}} & 2     & 19.6  & 88\%  & 71\%  & 3051  & 2788993 & 917.4 & (0.2:1) & 619.4 & (0.4:1) \\
    \multicolumn{1}{c}{} & \multicolumn{1}{c}{} & 5     & 57.5  & 100\% & 90\%  & 3     & 4322  & 0.8   &       & 1.2   &  \\
    \multicolumn{1}{c}{} & \multicolumn{1}{c}{} & 10    & 75.8  & 96\%  & 89\%  & 196   & 10588 & 2.5   &       & 2.8   &  \\
    \midrule
    \multicolumn{1}{c}{\multirow{12}[8]{*}{200}} & \multicolumn{1}{c}{\multirow{3}[2]{*}{100}} & 2     & 14.1  & 94\%  & 82\%  & 1623  & 864841 & 32.2  &       & 384.0 &  \\
    \multicolumn{1}{c}{} & \multicolumn{1}{c}{} & 5     & 42.7  & 100\% & 97\%  & 8     & 213   & 0.5   &       & 0.1   &  \\
    \multicolumn{1}{c}{} & \multicolumn{1}{c}{} & 10    & 57.5  & 99\%  & 93\%  & 26    & 45263 & 0.7   &       & 13.8  &  \\
        \cmidrule(l){2-12}
    \multicolumn{1}{c}{} & \multicolumn{1}{c}{\multirow{3}[2]{*}{200}} & 2     & 16.3  & 89\%  & 78\%  & 4279  & 5634851 & 259.9 &       & 1940.4 & (0.1:1) \\
    \multicolumn{1}{c}{} & \multicolumn{1}{c}{} & 5     & 48.0  & 99\%  & 95\%  & 13    & 1310  & 0.9   &       & 0.5   &  \\
    \multicolumn{1}{c}{} & \multicolumn{1}{c}{} & 10    & 65.0  & 98\%  & 90\%  & 128   & 163145 & 1.2   &       & 52.3  &  \\
        \cmidrule(l){2-12}
    \multicolumn{1}{c}{} & \multicolumn{1}{c}{\multirow{3}[2]{*}{500}} & 2     & 16.3  & 88\%  & 72\%  & 8083  & 6805861 & 1226.3 & (0.3:1) & 2137.6 & (0.7:3) \\
    \multicolumn{1}{c}{} & \multicolumn{1}{c}{} & 5     & 54.5  & 99\%  & 93\%  & 7     & 6606  & 1.6   &       & 2.2   &  \\
    \multicolumn{1}{c}{} & \multicolumn{1}{c}{} & 10    & 72.0  & 96\%  & 90\%  & 376   & 900152 & 3.2   &       & 302.4 &  \\
        \cmidrule(l){2-12}
    \multicolumn{1}{c}{} & \multicolumn{1}{c}{\multirow{3}[2]{*}{1000}} & 2     & 18.0  & 82\%  & 63\%  & 13906 & 9894589 & 3000.5 & (1.2:4) & 2835.9 & (3.0:5) \\
    \multicolumn{1}{c}{} & \multicolumn{1}{c}{} & 5     & 57.9  & 100\% & 94\%  & 4     & 1977  & 3.4   &       & 0.8   &  \\
    \multicolumn{1}{c}{} & \multicolumn{1}{c}{} & 10    & 75.6  & 96\%  & 84\%  & 704   & 6127929 & 15.0  &       & 1785.0 & (1.8:2) \\
    \midrule
    \multicolumn{3}{r}{\textbf{Average:}} & \textbf{45.0} & \textbf{95\%} & \textbf{86\%} & \textbf{1213} & \textbf{1087194} & \textbf{185.1} & \textbf{(0.0:6)} & \textbf{320.2} & \textbf{(0.2:13)} \\
    \bottomrule
    \end{tabular}%
 \label{tab:cpx}%
\end{table}% 

\pagebreak

\section*{Acknowledgements}
A. Atamt\"urk and Birce Tezel are supported, in part, the National Science Foundation grant \#0970180 and
by grant FA9550-10-1-0168 from the Office of the Assistant Secretary of Defense for Research and Engineering.
Simge K\"u\c{c}\"ukyavuz is supported, in part,
by the National Science Foundation grant \#1055668. The authors are also thankful to the Associate Editor and anonymous referees for their constructive feedback that improved the paper substantially.

\bibliographystyle{apalike}
\bibliography{reference}
\appendix
\section{Equivalency of (F\ref{opt:vdefn}) to the maximum flow problem} \label{app:equiv}

In Section \ref{sec:spi}, we showed the maximum flow equivalency of $v(S^+,L^-)$ under the assumption that $d_j\geq 0$ for all $j \in N$. In this section, we generalize the equivalency for the paths where $d_j<0$ for some $j \in N$.
\begin{observation} \label{rem:negfl}
If $d_j < 0 $ for some $j\in N$, one can represent the supply amount as a dummy arc incoming to node $j$ (i.e., added to $E_j^+$) with a fixed flow and capacity of $-d_j$ and set the modified demand of node $j$ to be $d_j = 0$.
\end{observation}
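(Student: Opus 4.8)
The plan is to make the claimed transformation precise and then check that it is an isomorphism of feasible regions preserving every quantity that enters the path cover and path pack inequalities, so that Propositions \ref{prop:zeros} and \ref{prop:maxflequiv} transfer verbatim. Concretely, for each node $j$ with $d_j<0$ I would introduce a new arc $t'$ incoming to $j$, place it in $E_j^+$, freeze its flow at $y_{t'}=-d_j$ and its capacity at $c_{t'}=-d_j$ (so $t'$ carries no fixed-charge variable and is permanently open), and reset $d_j:=0$. Carrying this out for every supply node yields a modified network $\tilde G$ in which every node has nonnegative demand, and the frozen arc $t'$ is simply pre-installed rather than being an ordinary arc of (F\ref{opt:vdefn}).

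First I would record the one-line algebraic identity behind the reduction: the flow-balance relation at node $j$,
\[
 i_{j-1}-r_{j-1}+y(E_j^+)-y(E_j^-)-i_j+r_j \;=\; d_j
\]
(and likewise the inequality version \eqref{const:flowleq}), becomes, after adding the frozen term $y_{t'}=-d_j$ to $y(E_j^+)$ and replacing the right-hand side by $0$, exactly the same relation. Every other constraint of (F\ref{opt:ls}) and (F\ref{opt:vdefn}) and the objective are untouched, and the added coordinate $y_{t'}$ is constant on the whole feasible region. Hence the map that forgets $y_{t'}$ is an affine bijection between the feasible set of $\tilde G$ and $\mathcal P$ (respectively between the feasible sets of the two value-function problems), so $\text{dim}(\text{conv}(\mathcal P))$ is unchanged, the value function $v(S^+,L^-)$ is unchanged for every objective set, the marginals $\rho_t(\cdot)$ are unchanged, and the submodular inequalities \eqref{ineq:submod1}--\eqref{ineq:submod2} written for $\tilde G$ (ranging over the genuine arc set $E$, with $t'$ pre-installed) are, after substituting $y_{t'}=-d_j$, valid inequalities for $\mathcal P$.

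Having established the equivalence, I would finish by invoking Propositions \ref{prop:zeros} and \ref{prop:maxflequiv} for $\tilde G$, which is now legitimate since all demands of $\tilde G$ are nonnegative: $v(S^+,L^-)$ is a maximum $s_N$--$t_N$ flow on $\tilde G$, and the forward/backward recursions \eqref{eqn:alphau1}--\eqref{eqn:betad1} apply on $\tilde G$ with the convention that the frozen inflow $-d_j$ at node $j$ acts like an extra source arc of capacity $-d_j$ from $s_N$ to $j$, i.e.\ it adds $-d_j$ to the cost of every cut passing above node $j$ and nothing to a cut passing below it. I would also note that assumptions (A.1)--(A.5) for $G$ translate into the corresponding assumptions for $\tilde G$, so the whole apparatus of Sections \ref{sec:spi}--\ref{sec:facet} is available on $\tilde G$.

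The point that needs care — and where a naive argument breaks — is precisely that $t'$ is not an ordinary arc of (F\ref{opt:vdefn}): it carries a fixed positive flow and is never closed, so it cannot be placed in an objective set $S^+,L^-$ or a coefficient set $K^+,K^-$ (that would force $a_{t'}=1$ or $y_{t'}=0$). The bookkeeping I must get right is therefore (i) that $t'$ never appears in any $\bar x_t$-term of the derived inequality, (ii) that $-d_j$ is charged to cuts above node $j$ in the recursions even though $t'\notin S_j^+$ in the formal sense, and (iii) that feasibility of the transformed value-function problem for all relevant objective sets follows from feasibility of the original. Once this frozen-arc accounting is pinned down, the reduction to the $d\ge 0$ case is routine and requires no new estimates.
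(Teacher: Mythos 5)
Your transformation and the affine bijection it induces are the easy half of the argument, and you have correctly located the delicate point: the dummy arc carries a \emph{fixed} positive flow, whereas every arc of (F\ref{opt:vdefn}) has lower bound zero. The gap is that you stop there. The value function that Propositions \ref{prop:zeros} and \ref{prop:maxflequiv} and the recursions \eqref{eqn:alphau1}--\eqref{eqn:betad1} actually compute on the transformed path is the one in which the dummy arc is an \emph{ordinary} member of $S_j^+$, free to carry any flow in $[0,-d_j]$; the value function that your bijection identifies with the original one is the \emph{restricted} problem in which $y_{t'}$ is frozen at $-d_j$. A priori the first can exceed the second, and if it did, the min-cut recursions would not be computing the marginals of the true value function, so validity of the resulting inequalities would not follow. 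Your items (i)--(iii) of ``frozen-arc accounting'' never address this: item (iii) concerns feasibility of the transformed problem, not equality of the two optimal values, and the closing claim that the rest ``is routine and requires no new estimates'' is exactly where the proof is missing.

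Closing that gap is the entire content of the paper's justification of the observation: Proposition \ref{prop:dummysupply} shows, via the explicit rerouting procedure of Algorithm \ref{alg:prep}, that the unrestricted problem always admits an optimal solution saturating every dummy supply arc --- the excess supply is pushed out through the arcs of $E^-\setminus(S^-\cup L^-)$ (objective coefficient $-1$, so the objective value is preserved) and along the forward and backward path arcs, and the procedure can fail only when the original instance is infeasible. An alternative route consistent with your setup would be to invoke max-flow/min-cut with lower bounds and observe that the dummy arc, running from $s_N$ to $j$, never crosses an $s_N$--$t_N$ cut backwards, so the cut formula and hence the min-cut value are unaffected by imposing the lower bound; but that argument also has to be written out, and it still requires the feasibility hypothesis that the paper states explicitly. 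Either way, a substantive lemma is needed where your proposal has only a promissory note.
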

Given the node set $N$ with at least one supply node, let $\mathcal{T}(N)$ be the transformed path using Observation \ref{rem:negfl}. Transformation $\mathcal{T}$ ensures that the dummy supply arcs are always open. As a result, they are always in the set $S^+$. We refer to the additional constraints that fix the flow to the supply value on dummy supply arcs as \emph{fixed-flow constraints}. Notice that, $v(S^+, L^-)$ computed for $\mathcal{T}(N)$ does not take fixed-flow constraints into account. In the next proposition, for a path structure, we show that there exists at least one optimal solution to (F\ref{opt:vdefn}) such that the fixed-flow constraints are satisfied.

\begin{proposition} \label{prop:dummysupply}
	Suppose that $d_j <0$ for some $j\in N$. If (F\ref{opt:vdefn}) for the node set $N$ is feasible, then it has at least one optimal solution that satisfies the fixed-flow constraints.
\end{proposition}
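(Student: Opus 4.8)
The plan is to combine the feasibility hypothesis on the original path with the maximum‑flow equivalence already established for paths whose node demands are nonnegative. After the transformation $\mathcal{T}$ every modified demand is nonnegative (each supply node has had its demand reset to $0$), so Propositions~\ref{prop:zeros} and \ref{prop:maxflequiv} apply to $\mathcal{T}(N)$: the value $v(S^+,L^-)$ on $\mathcal{T}(N)$ equals the value of a maximum $s_N$--$t_N$ flow on the associated graph, and in the reformulation \eqref{opt:vreform} exactly the arcs of $S^+$ remain on the source side. Since every dummy supply arc $\delta\in E_j^+$ lies in $S^+$, it survives this reduction as an arc emanating from the source $s_N$ with capacity $c_\delta=-d_j$.

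First I would record that feasibility of (F\ref{opt:vdefn}) for the \emph{original} node set $N$ yields a feasible solution $\hat w$ of (F\ref{opt:vdefn}) for $\mathcal{T}(N)$ in which \emph{all} fixed‑flow constraints hold: take any feasible $(y,i,r)$ of the original problem and set the flow on each dummy supply arc $\delta\in E_j^+$ equal to $c_\delta=-d_j$. With this substitution the flow‑balance inequality at each former supply node $j$ becomes $\big(\text{original netflow at }j\big)-d_j\le 0$, i.e.\ exactly the original inequality at $j$ (this is the content of Observation~\ref{rem:negfl}); all other nodes are unchanged. Hence the feasible region of (F\ref{opt:vdefn}) for $\mathcal{T}(N)$ does contain points obeying the fixed‑flow constraints, and it remains to produce an \emph{optimal} such point.

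The heart of the proof is a routing/augmentation step. Starting from the feasible flow $\hat w$ -- in which every source‑incident (dummy) arc is saturated -- I would compute a maximum $s_N$--$t_N$ flow by augmenting paths in the residual graph. Every augmenting path runs from $s_N$ to $t_N$ and, taken simple, leaves $s_N$ along a single arc and never returns to $s_N$; consequently it can never traverse the reverse of an arc out of $s_N$, so it never decreases the flow on any dummy arc $\delta$. Since $\hat w$ already saturates $\delta$ and $c_\delta$ is its capacity, the flow on $\delta$ stays equal to $c_\delta$ throughout, and the flow at termination is simultaneously a maximum flow -- hence optimal for (F\ref{opt:vdefn}) on $\mathcal{T}(N)$ by Proposition~\ref{prop:maxflequiv} -- and feasible for every fixed‑flow constraint. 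Using a shortest‑augmenting‑path rule (or, equivalently, picking among all maximum flows one that maximizes the total dummy‑arc flow and cancelling a residual cycle through $s_N$ should some dummy arc still be unsaturated) removes any concern about termination with irrational data. One may also phrase this without max‑flow language, in the style of the proof of Proposition~\ref{prop:zeros}: take an optimal $w^\ast$; if a dummy arc $\delta\in E_j^+$ has $y^\ast_\delta<c_\delta$ and the flow‑balance inequality at $j$ has slack at least $\epsilon:=c_\delta-y^\ast_\delta$, then raising $y^\ast_\delta$ by $\epsilon$ strictly increases the objective (as $a_\delta=1$ for $\delta\in S^+\subseteq K^+$), contradicting optimality; otherwise the residual excess at $j$ must be driven to $t_N$ along a residual directed path whose existence is forced by the feasibility of $\hat w$.

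I expect the main obstacle to be this routing step: showing rigorously that the extra supply forced onto $\delta$ can always be moved to the sink \emph{without ever reducing the flow on another dummy arc} and \emph{without decreasing the objective}. Both facts stem from the observation that the dummy arcs are precisely the source‑incident arcs of the max‑flow network of $\mathcal{T}(N)$ and that augmenting paths emanating from the source cannot re‑enter it; making the rerouting precise as a residual‑graph / flow‑decomposition argument (the cycle‑cancellation through $s_N$) is the delicate part, whereas the reduction to $\mathcal{T}(N)$ and the construction of the saturated feasible point $\hat w$ are routine.
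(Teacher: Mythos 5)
Your proof is correct in outline but takes a genuinely different route from the paper's. The paper starts from an \emph{optimal} solution of (F\ref{opt:vdefn}) on $\mathcal{T}(N)$ (with the zero pattern supplied by Proposition \ref{prop:zeros}) and repairs it: Algorithm \ref{alg:prep} pushes the missing supply $\Delta=c_p-y_p^*$ of each dummy arc forward and then backward along the path, absorbing it in the residual capacities of the outgoing arcs $E_k^-\setminus(S_k^-\cup L_k^-)$ and of the path arcs, and declares infeasibility if any $\Delta$ survives. You go the other way: from feasibility of the original problem you build a point of (F\ref{opt:vdefn}) on $\mathcal{T}(N)$ that already saturates every dummy arc, and then augment it to a maximum flow, observing that a simple augmenting path never traverses the reverse of a source-incident arc (there are no arcs into $s_N$) and hence never desaturates a dummy arc. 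Your organization cleanly separates ``a saturated feasible flow exists'' from ``it can be augmented to optimality without touching the source arcs,'' needs no path-specific case analysis, and would work verbatim on a general graph; the paper's version is constructive and stays entirely inside the path structure.

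One step of yours needs more care than you give it. The point $\hat w$ you construct is feasible for (F\ref{opt:vdefn}), but (F\ref{opt:vdefn}) only imposes ``$\leq$'' in \eqref{const:flowleq}; the net inflow at a node can therefore be strictly negative, so $\hat w$ corresponds to a pseudoflow with $z_j<0$ at some nodes rather than to a feasible flow of the network of Proposition \ref{prop:maxflequiv}, and you cannot start augmenting from it as written. This is repairable by flow decomposition: route the positive excesses to $t_N$ through the $z_j$ arcs, decompose the support of $\hat w$ into $s_N$--$t_N$ paths, paths from deficit nodes to $t_N$, and cycles, and delete the latter two kinds; since $s_N$ has no incoming arcs, none of the deleted objects uses a source arc, so the dummy arcs stay saturated and what remains is a genuine flow with the same saturation. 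You gesture at exactly this (``residual-graph / flow-decomposition argument'') and correctly identify it as the delicate part, but it is the one place where the argument as stated is incomplete.
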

\begin{proof}
	We need to show that $v(S^+,L^-)$ has an optimal solution where the flow at the dummy supply arcs is equal to the supply values. The transformation $\mathcal{T}$ makes Proposition \ref{prop:zeros} applicable to the modified path $\mathcal{T}(N)$. Let $\mathcal{Y}$ be the set of optimal solutions of (F\ref{opt:vdefn}). Then, there exists a solution $(\mathbf{y}^*, \mathbf{i}^*, \mathbf{r}^*)\in \mathcal{Y}$ where $y_t^* = 0$ for $t\in E^-\setminus (S^-\cup L^-)$. Let $p\in S_j^+$ represent the index of the dummy supply arc with $c_p = -d_j$. If $y_p^* < c_p$, then satisfying the fixed-flow constraints require pushing flow through the arcs in $E^-\setminus L^-$. We use Algorithm \ref{alg:prep} to construct an optimal solution with $y_p^* = c_p$. Note that each arc in $E_k^-\setminus L_k^-$ for $k\in N$ appear in (F\ref{opt:vdefn}) with the same coefficients, therefore we merge these outgoing arcs into one in Algorithm \ref{alg:prep}. We represent the merged flow and capacity by $\bar{Y}^-_k = \sum_{t\in E_k^-\setminus (S_k^-\cup L_k^-)} y_t^*$ and $\bar{C}_k = c\big(E_k^-\setminus (S_k^-\cup L_k^-)\big)$ for $k\in N$.
	\begin{algorithm}
\caption{} \label{alg:prep}
\begin{algorithmic}
\STATE $\mathcal{J}$: Set of supply nodes in $N$ where the nodes are sorted with respect to their order in $N$.
\STATE $(\mathbf{y}^*, \mathbf{i}^*, \mathbf{r}^*)\in \mathcal{Y}$: $y_t^* = 0$ for all $t \in E^-$.
\FOR{$q \in \mathcal{J}$}
\STATE Let $p$ be the dummy supply arc in $S_q^+$
\STATE $\Delta = c_p - y_p^*$
\FOR{ $j = q$ \TO $n$}
\STATE $ \bar{Y}^-_j = \bar{Y}^-_j + \min\{ \bar{C}_j - \bar{Y}^-_j , \Delta \} $
\STATE $\Delta = \Delta - \min\{ \bar{C}_j - \bar{Y}_j^- , \Delta \}$
\STATE $i_j^{*} = i_j^* + \Delta $
\IF {$i_j^{*} > u_j$}
\STATE $\Delta = i_j^* - u_j$
\STATE $i_j^* = u_j$
\STATE Let $k := j$
\STATE \textbf{break inner loop}
\ENDIF
\ENDFOR
\IF {$\Delta > 0$}
\FOR{ $j = k$ \TO $1$}
\STATE $ \bar{Y}^-_j = \bar{Y}^-_j + \min\{ \bar{C}_j - \bar{Y}^-_j , \Delta \} $
\STATE $\Delta = \Delta - \min\{ \bar{C}_j - \bar{Y}_j^- , \Delta \}$
\STATE $r_j^{*} = r_j^* + \Delta $
\IF {$r_j^{*} > b_j$}
\STATE  $\Delta = r_j^* - b_j$
\STATE \textbf{break inner loop}
\ENDIF
\ENDFOR
\ENDIF
\IF {$\Delta > 0$}
\STATE (F\ref{opt:vdefn}) is infeasible for the node set $N$.
\ENDIF
\ENDFOR
\end{algorithmic}
\end{algorithm}

\end{proof}
Proposition \ref{prop:dummysupply} shows that, under the presence of supply nodes, transformation $\mathcal{T}$ both captures the graph's structure and does not affect (F\ref{opt:ls})'s validity. As a result, Propositions \ref{prop:zeros} and \ref{prop:maxflequiv} become relevant to the transformed path and submodular path inequalities \eqref{ineq:submod1exp} and \eqref{ineq:submod2exp} are also valid for paths where $d_j<0$ for some $j\in N$.

\section{Proofs}
\subsection{Proof of Lemma \ref{prop:separability1}} \label{app:proof_sep1}
	Recall that $C= S^+\cup L^-$ and let $C_1 = S_{N1}^+\cup L_{N1}^-$ and $C_2 = S_{N2}^+\cup L_{N2}^-$. In \eqref{mincutandm}, we showed that the value of the minimum cut is $$v(C) = m_i = \min\{\alpha_i^u+\beta_i^u - c(S_i^+), \alpha_i^d+\beta_i^d-d_i-c(S_i^-)\}$$ for all $i\in N$. For node set $N_1$ and the arc set $C_1$, the value of the minimum cut is $$v_{1}(C_1) = \min \{ \alpha_{j-1}^u+b_{j-1}, \alpha_{j-1}^d \}.$$ This is because of three observations: (1) the values $\alpha_i^{\{u,d\}}$ for $i\in[1,j-2]$ are the same for the node sets $N_1$ and $N$, (2) for the arc set $C_1$ the set $S_{j-1}^+$ now includes the backward path arc $(j,j-1)$ and (3) node $j-1$ is the last node of the first path. Similarly, for node set $N_2$ and the arc set $C_{2}$, the value of the minimum cut is $$v_2(C_2) = \min\{\beta_j^u+u_{j-1}, \beta_j^d\}.$$ For nodes $N_2$ and the arc set $C_2$, (1) the values $\beta_i^{\{u,d\}}$ for $i\in[j+1,n]$ are the same for the node sets $N_2$ and $N$, (2) for the arc set $C_2$ the set $S_{j}^+$ now includes the forward path arc $(j-1,j)$ and (3) node $j$ is the first node of the second path.
	
	Now, if $\alpha_j^u = \alpha_{j-1}^d+u_{j-1}+c(S_j^+)$, then $\alpha_j^d = \alpha_{j-1}^d + d_j + c(S_{j}^-)$ from equations in \eqref{eqn:alphau1}--\eqref{eqn:alphad1}. Then, rewriting $v(C)=m_j$ and $v_1(C_1)$ in terms of $\alpha_{j-1}^d$: $$v(C) = \alpha_{j-1}^d + \min\{\beta_j^u+u_{j-1}, \beta_j^d\}$$ and $$v_1(C_1)=\alpha_{j-1}^d.$$ As a result, the values $v_1(C_1)$ and $v_2(C_2)$ summed gives the value $v(C)$ under the assumption for the value of $\alpha_{j}^u$.
	
	Similarly, if $\beta_{j-1}^u = \beta_{j}^d + b_{j-1} +c(S_{j-1}^+)$, then $\beta_{j-1}^d = \beta_{j}^d +d_{j-1} + c(S_{j-1}^-)$ from equations in \eqref{eqn:betau1}--\eqref{eqn:betad1}. Then, rewriting $v(C)=m_{j-1}$ and $v_2(C_2)$ in terms of $\beta_j^d$: $$v(C) = \beta_j^d+ \min\{\alpha_{j-1}^u +b_{j-1} , \alpha_{j-1}^d\}$$ and $$v_2(C_2)=\beta_j^d.$$ As a result, the values $v_1(C_1)$ and $v_2(C_2)$ summed gives the value $v(C)$ under the assumption for the value of $\beta_{j-1}^u$.

\subsection{Proof of Lemma \ref{prop:separability2}} \label{app:proof_sep2}
The proof follows closely to that of Lemma \ref{prop:separability1}. Let $C= S^+\cup L^-$, $C_1 = S_{N1}^+\cup L_{N1}^-$ and $C_2 = S_{N2}^+\cup L_{N2}^-$. For node set $N_1$ and the arc set $C_1$, the value of the minimum cut is $$v_{1}(C_1) = \min \{ \alpha_{j-1}^u, \alpha_{j-1}^d + u_{j-1} \},$$ where $u_{j-1}$ is added because $c(S_{N1}^-)=c(S_{1j-1}^-)+u_{j-1}$. Similarly, for node set $N_2$ and the arc set $C_{2}$, the value of the minimum cut is $$v_2(C_2) = \min\{\beta_j^u, \beta_j^d + b_{j-1}\},$$ where $b_{j-1}$ is added because $c(S_{N2}^-)=c(S_{jn}^-)+b_{j-1}$.	

Now, if $\alpha_j^d = \alpha_{j-1}^u+b_{j-1}+d_{j-1}+c(S_j^-)$, then $\alpha_j^u = \alpha_{j-1}^u + c(S_{j}^+)$ from equations in \eqref{eqn:alphau1}--\eqref{eqn:alphad1}. Then, rewriting $v(C)=m_j$ and $v_1(C_1)$ in terms of $\alpha_{j-1}^u$: $$v(C) = \alpha_{j-1}^u + \min\{\beta_j^u, \beta_j^d+b_{j-1}\}$$ and $$v_1(C_1)=\alpha_{j-1}^u.$$ As a result, the values $v_1(C_1)$ and $v_2(C_2)$ summed gives the value $v(C)$ under the assumption for the value of $\alpha_{j}^d$.
	
	Similarly, if $\beta_{j-1}^d = \beta_{j}^u + u_{j-1}+ d_{j-1}+c(S_{j-1}^-)$, then $\beta_{j-1}^u = \beta_{j}^u + c(S_{j-1}^+)$ from equations in \eqref{eqn:betau1}--\eqref{eqn:betad1}. Then, rewriting $v(C)=m_{j-1}$ and $v_2(C_2)$ in terms of $\beta_j^u$: $$v(C) = \beta_j^u+ \min\{\alpha_{j-1}^u , \alpha_{j-1}^d+u_{j-1}\}$$ and $$v_2(C_2)=\beta_j^u.$$ As a result, the values $v_1(C_1)$ and $v_2(C_2)$ summed gives the value $v(C)$ under the assumption for the value of $\beta_{j-1}^d$.
	
\subsection{Proof of Lemma \ref{prop:separability3}} \label{app:proof_sep3}
	The proof follows closely to that of Lemmas \ref{prop:separability1} and \ref{prop:separability2}. Let $C= S^+\cup L^-$, $C_1 = S_{N1}^+\cup L_{N1}^-$ and $C_2 = S_{N2}^+\cup L_{N2}^-$. For node set $N_1$ and the arc set $C_1$, the value of the minimum cut is $$v_{1}(C_1) = \min \{ \alpha_{j-1}^u, \alpha_{j-1}^d\}$$ and for node set $N_2$ and the arc set $C_{2}$, the value of the minimum cut is $$v_2(C_2) = \min\{\beta_j^u +u_{j-1}, \beta_j^d + b_{j-1}\}.$$
	
Now, if $\alpha_j^u = \alpha_{j-1}^d+u_{j-1}+c(S_j^+)$ and $\beta_{j-1}^d = \beta_{j}^u + u_{j-1}+ d_{j-1}+c(S_{j-1}^-)$, then $\alpha_j^d = \alpha_{j-1}^d + d_j + c(S_j^-)$ and $\beta_{j-1}^u=\beta_j^u + c(S_j^+)$. Then, rewriting $v(C)=m_j$, $v_1(C_1)$ and $v_2(C_2)$: $$v(C) = \alpha_{j-1}^d + \min\{u_{j-1}+\beta_j^u, \beta_j^d\} = \alpha_{j-1}^d + u_{j-1}+\beta_j^u,$$ $$v_1(C_1)=\alpha_{j-1}^d \text{ and } v_2(C_2)=\beta_j^u+u_{j-1}.$$ As a result, the values $v_1(C_1)$ and $v_2(C_2)$ summed gives the value $v(C)$ under the assumption for the values of $\alpha_{j}^u$ and $\beta_{j-1}^d$.

\subsection{Proof of Lemma \ref{prop:separability4}} \label{app:proof_sep4}
	The proof follows closely to that of Lemmas \ref{prop:separability1} and \ref{prop:separability2}. Let $C= S^+\cup L^-$, $C_1 = S_{N1}^+\cup L_{N1}^-$ and $C_2 = S_{N2}^+\cup L_{N2}^-$. For node set $N_1$ and the arc set $C_1$, the value of the minimum cut is $$v_{1}(C_1) = \min \{ \alpha_{j-1}^u + b_{j-1}, \alpha_{j-1}^d+u_{j-1}\}$$ and for node set $N_2$ and the arc set $C_{2}$, the value of the minimum cut is $$v_2(C_2) = \min\{\beta_j^u, \beta_j^d \}.$$
	
Now, if $\alpha_j^d = \alpha_{j-1}^u+b_{j-1}+d_j + c(S_j^-)$ and $\beta_{j-1}^u = \beta_{j}^d +b_{j-1}+ c(S_{j-1}^+)$, then $\alpha_j^u = \alpha_{j-1}^u + c(S_j^+)$ and $\beta_{j-1}^d=\beta_j^d + d_j + c(S_j^-)$. Then, rewriting $v(C)=m_j$, $v_1(C_1)$ and $v_2(C_2)$: $$v(C) = \alpha_{j-1}^u + \min\{\beta_j^u, \beta_j^d +b_{j-1}\}=\alpha_{j-1}^u + \beta_j^d+b_{j-1},$$ $$v_1(C_1)=\alpha_{j-1}^u + b_{j-1} \text{ and } v_2(C_2)=\beta_j^d.$$ As a result, the values $v_1(C_1)$ and $v_2(C_2)$ summed gives the value $v(C)$ under the assumption for the values of $\alpha_{j}^d$ and $\beta_{j-1}^u$.

\end{document}